\documentclass[12pt,a4paper]{article}
\usepackage[margin=2cm]{geometry}
\usepackage{t1enc}
\usepackage[utf8]{inputenc}
\usepackage{amsthm,amsmath,amssymb}
\usepackage{graphicx}
\usepackage{enumerate}
\usepackage{hyperref}
\usepackage{bm}
\usepackage{comment}
\usepackage{amsfonts}
\usepackage{graphicx,caption}
\usepackage{bm}
\usepackage{amsmath, amsthm, amssymb}
\usepackage{graphicx}
\usepackage{hyperref}
 \usepackage{relsize}

\usepackage{bbm}

\theoremstyle{plain}
\usepackage{amsthm}
\makeatletter
\newcommand{\newreptheorem}[2]{\newtheorem*{rep@#1}{\rep@title}\newenvironment{rep#1}[1]{\def\rep@title{#2 \ref*{##1}}\begin{rep@#1}}{\end{rep@#1}}}
\makeatother

\newtheorem{theorem}{Theorem}
\newtheorem*{theorem-non}{Theorem}
\newtheorem*{non-lemma}{Lemma} 
\newtheorem{lemma}[theorem]{Lemma}
\newreptheorem{lemma}{Lemma}

\newtheorem{proposition}[theorem]{Proposition}

\theoremstyle{definition}
\newtheorem{remark}[theorem]{Remark}

\newcommand{\rwpc}{RGPC}%{rooted wired positive contraction}
\newcommand{\rwpcs}{RGPCs}

\DeclareMathOperator{\range}{Im }
\DeclareMathOperator{\mm}{mm}

\newcommand{\Addresses}{{% additional braces for segregating \footnotesize
  \bigskip
  \footnotesize

  Andr\'as M\'esz\'aros, 

\textsc{Central European University, Budapest \&}\par\nopagebreak
    
\textsc{Alfr\'ed R\'enyi Institute of Mathematics, Budapest}\par\nopagebreak
  \textit{E-mail address:}: \texttt{Meszaros\_Andras@phd.ceu.edu}

}}

\DeclareMathOperator{\rang}{Rank}
\title{Matchings on trees and the adjacency matrix:\\A determinantal viewpoint}
\author{Andr\'as M\'esz\'aros}
%\affil{Central European University, Budapest}
% \thanks{Central European University, Budapest  
%    e-mail: Meszaros\_Andras@phd.ceu.edu}
 \begin{document}

\maketitle

\begin{abstract}

Let $G$ be a finite tree. For any matching $M$ of $G$, let $U(M)$ be the set of vertices uncovered by $M$. Let $\mathcal{M}_G$ be a uniform random maximum size matching of $G$. 
In this paper, we analyze the structure of $U(\mathcal{M}_G)$. We first show that $U(\mathcal{M}_G)$ is a determinantal process. We also show that  for most vertices of $G$, the process $U(\mathcal{M}_G)$ in a small neighborhood of that vertex can be  well approximated  based on a somewhat larger neighborhood of the same vertex. Then we show that the normalized  Shannon entropy of $U(\mathcal{M}_G)$ can be also well approximated using  the local structure of $G$. In other words, in the realm of trees, the normalized  Shannon entropy of $U(\mathcal{M}_G)$ -- that is, the normalized logarithm  of the number of maximum size matchings of $G$ -- is a Benjamini-Schramm continuous parameter.

We show that $U(\mathcal{M}_G)$ is a determinantal process through establishing a new connection  between $U(\mathcal{M}_G)$ and the adjacency matrix of $G$. This result sheds a new light on the well-known  fact that on a tree, the number of vertices uncovered by a maximum size matching is equal to the nullity of the adjacency matrix.   

Some of the proofs are based on the well established method of introducing a new perturbative parameter, which we call temperature, and then define the positive temperature analogue of  $\mathcal{M}_G$, the so called monomer-dimer model, and let the temperature go to zero.

%For a tree, consider the vertices that are not covered by a uniform random maximum size matching. We prove that this random subset of the vertices is a determinantal process. As an application, we prove that on trees, the number of maximum size matchings is well behaved with respect to Benjamini-Schramm convergence. 

% It is well-known that on a finite tree the maximum number of vertices covered by a matching is the same as the rank of the adjacency matrix. Moreover, it is even true in the following local sense. For any vertex $v$, the probability that $v$ is uncovered by a uniform random perfect matching is the same as the measure of the atom $0$ in the spectral measure at $v$. This result can be extended even for infinite trees as it was shown by Bordenave, Lelarge and Salez. They showed that both problems lead to the same recursion. Our aim in this paper to give a conceptual explanation of this coincidence, by showing that there is a determinatal measure that connects matchings and the adjacency matrix.     

\end{abstract}

%\documentclass{article}

%\usepackage[latin2]{inputenc}
%\usepackage{t1enc}
%\usepackage[magyar]{babel}
%\usepackage{anysize}
%\usepackage{amsmath, amssymb, epsfig, theorem}O
%\usepackage{enumitem}
%\usepackage{enumerate}
%\usepackage{amsmath, amssymb, epsfig, theorem, enumerate}

%\usepackage{algorithm}
%\usepackage[noend]{algpseudocode}

%\begin{document}
\section{Introduction}

First let us recall the notion of determintal measures. Let $V$ be a finite set. Let $P=(p_{ij})_{i,j\in V}$ be an orthogonal projection matrix. Then there is a unique probability measure $\nu_P$ on the subsets of $V$ with the following property. For any finite subset $F$ of $V$, we have
\[\nu_P(\{X|F\subseteq X\subseteq V\})=\det (p_{ij})_{i,j\in F}.\]
This unique probability measure is called the determinantal probability measure corresponding to $P$. A determinantal process corresponding to $P$ is a random subset of $V$ with law $\nu_P$.

Given a finite graph $G$, let $\bar{P}_G$ be the orthogonal projection to the kernel of the adjacency matrix of $G$. For any matching $M$ of $G$, let $U(M)$ be the set of vertices uncovered by $M$. Let $\mathcal{M}_G$ be a uniform random maximum size matching of $G$. 
\begin{theorem}\label{thmintdet}
Let $G$ be a finite tree, then $U(\mathcal{M}_G)$ is a determinantal process corresponding~to~$\bar{P}_G$.
\end{theorem}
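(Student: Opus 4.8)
The plan is to compute the atom probabilities $\Pr\big(U(\mathcal{M}_G)=X\big)$ directly and match them against the principal minors of $\bar P_G$. Write $A$ for the adjacency matrix of $G$, put $n=|V(G)|$, let $\mu$ be the maximum matching size and $r=n-2\mu$, and for a symmetric matrix $B$ and a set $Y$ write $B|_{Y,Y}$ for the principal submatrix on $Y$. For $X\subseteq V(G)$ let $N_X$ be the number of maximum matchings $M$ with $U(M)=X$, and $N=\sum_X N_X$, so $\Pr\big(U(\mathcal{M}_G)=X\big)=N_X/N$. The main device is the diagonal perturbation identity
\[\det(A+\Lambda)=\sum_{S\subseteq V(G)}\det\!\big(A|_{S^c,S^c}\big)\prod_{v\in S}\lambda_v,\qquad \Lambda=\operatorname{diag}(\lambda_v)_{v}.\]
Here the tree hypothesis enters decisively: expanding $\det(A+\Lambda)$ over permutations, the only nonvanishing terms come from products of fixed points and transpositions along edges, since a longer cycle in the permutation would force a cycle in $G$. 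Collecting these, $\det(A+\Lambda)=\sum_{M}(-1)^{|M|}\prod_{v\in U(M)}\lambda_v$, the multivariate monomer--dimer generating polynomial. Comparing the coefficient of the squarefree monomial $\prod_{v\in X}\lambda_v$ on the two sides, and noting that every matching with $U(M)=X$ for $|X|=r$ is maximum, I obtain
\[\det\!\big(A|_{X^c,X^c}\big)=(-1)^{\mu}N_X\qquad (|X|=r).\]

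Next I would convert this principal minor of $A$ into the corresponding principal minor of $\bar P_G$. Diagonalize $A=U\Theta U^{\mathsf T}$ with $U$ orthogonal, ordering eigenvalues so that $\theta_1,\dots,\theta_{2\mu}\neq 0$ and $\theta_{2\mu+1}=\dots=\theta_n=0$; the equality $\dim\ker A=n-2\mu=r$ here is itself recovered from the expansion above, whose lowest-degree part (degree $r$) is nonzero. Then $\bar P_G=U_0U_0^{\mathsf T}$ with $U_0$ the last $r$ columns of $U$, and $A=U_+\Theta_+U_+^{\mathsf T}$ with $U_+$ the first $2\mu$ columns. By multiplicativity of the determinant, for $|X|=r$ (so $|X^c|=2\mu$),
\[\det\!\big(A|_{X^c,X^c}\big)=\Big(\prod_{i=1}^{2\mu}\theta_i\Big)\big(\det (U_+)_{X^c}\big)^2,\qquad \det\!\big(\bar P_G|_{X,X}\big)=\big(\det (U_0)_{X}\big)^2.\]
Jacobi's identity on complementary minors, together with $U^{-1}=U^{\mathsf T}$ and $\det U=\pm1$, gives $\big(\det (U_+)_{X^c}\big)^2=\big(\det (U_0)_{X}\big)^2$. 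Writing $c=\prod_{i=1}^{2\mu}\theta_i$ for the product of the nonzero eigenvalues of $A$, this yields the uniform proportionality
\[\det\!\big(A|_{X^c,X^c}\big)=c\,\det\!\big(\bar P_G|_{X,X}\big)\qquad (|X|=r),\]
with a constant $c$ independent of $X$.

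Combining the two boxed identities, $N_X=(-1)^{\mu}c\,\det\!\big(\bar P_G|_{X,X}\big)$ for $|X|=r$, while $N_X=0$ otherwise, since every maximum matching leaves exactly $r=n-2\mu$ vertices uncovered. Summing over $X$ and using that $\bar P_G$ is a rank-$r$ projection, so that $\sum_{|X|=r}\det\!\big(\bar P_G|_{X,X}\big)$ is the $r$-th elementary symmetric function of its eigenvalues $(1^{(r)},0^{(n-r)})$, which equals $1$, I get $N=(-1)^{\mu}c$ and therefore $\Pr\big(U(\mathcal{M}_G)=X\big)=N_X/N=\det\!\big(\bar P_G|_{X,X}\big)$ for every $X$ of size $r$. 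Finally, to match the definition of $\nu_{\bar P_G}$ through inclusion probabilities, I would sum the atoms: for any $F$,
\[\Pr\big(F\subseteq U(\mathcal{M}_G)\big)=\sum_{X\supseteq F,\ |X|=r}\det\!\big(\bar P_G|_{X,X}\big)=\det\!\big(\bar P_G|_{F,F}\big),\]
the last equality being the standard summation identity for a projection kernel.

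I expect the main obstacle to be the linear-algebra step of the second paragraph: establishing that the ratio between the principal minors of $A$ on $X^c$ and those of $\bar P_G$ on $X$ is a single constant independent of $X$, with the signs under control, is exactly what forces the uniform counting measure $N_X/N$ to coincide with the determinantal measure. This is where Jacobi's complementary-minor identity for the orthogonal eigenbasis does the decisive work, whereas the role of the tree hypothesis is confined to producing the clean matching expansion of $\det(A+\Lambda)$ and the consequent recovery of $\dim\ker A = n-2\mu$.
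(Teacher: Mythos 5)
Your proof is correct, and it follows a genuinely different route from the one in the paper. The paper never works directly at zero temperature: it fixes a bipartition $(S,T)$, introduces the Boltzmann matching $\mathcal{M}_G^z$, and proves (Theorem \ref{postempdet}) that $\Delta(\mathcal{M}_G^z)$ is determinantal for the row space of $B_G^z=(zI\ \ H_G)$, using Lemma \ref{altdef} together with the uniqueness statement of Proposition \ref{propuniq} to evaluate $|\det B_G^z[X]|^2$; Theorem \ref{thmintdet} is then extracted by letting $z\to 0$, which requires the orthogonal-complement Lemma \ref{ortho}, the strong-operator convergence Lemmas \ref{strongconv} and \ref{strongconv2}, and the complement and direct-sum facts for determinantal processes (Propositions \ref{propSOT}, \ref{propdet21}, \ref{propdet22}). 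You instead argue entirely at $z=0$ with finite-dimensional linear algebra: the tree hypothesis enters only through the exact monomer--dimer expansion of $\det(A+\Lambda)$, which simultaneously recovers $\dim\ker A=n-2\mu$ and the atom identity $\det\bigl(A|_{X^c,X^c}\bigr)=(-1)^{\mu}N_X$; Jacobi's complementary-minor identity in an orthonormal eigenbasis of $A$ then converts these minors into principal minors of $\bar{P}_G$ with a constant independent of $X$, and summing (the $r$-th elementary symmetric function of the eigenvalues of a rank-$r$ projection equals $1$) fixes the normalization. Your final step, passing from the atoms $\det\bigl(\bar{P}_G|_{X,X}\bigr)$ to the inclusion probabilities $\det\bigl(\bar{P}_G|_{F,F}\bigr)$, is precisely the equivalence recorded in the paper's Lemma \ref{altdef} (quoted from Lyons), so invoking it is legitimate. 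The trade-off is clear: your argument is shorter and self-contained, needs neither the bipartition nor any limiting or functional-analytic machinery, and in fact never uses Proposition \ref{propuniq} (you never need $N_X\le 1$); the paper's detour through positive temperature is what additionally yields Theorem \ref{postempdet} and the infinite-tree statement Theorem \ref{thmzerotdet}, settings in which a direct spectral-decomposition argument like yours has no immediate analogue.
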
 

We need a few further notations. Given a vertex $v$ of a graph $G$, let $b_G^0(v)\in \mathbb{R}^{V(G)}$ be the characteristic vector of the set of neighbors of $v$. For a vertex $o$ of $G$, and a positive integer~$r$, let $\Pi_{G,o,r}$ be the orthogonal projection to the subspace of $\mathbb{R}^{V(G)}$ generated by the vectors $(b_G^0(v))_{v\in V(B_r(G,o))}$.\footnote{As usual,  $B_r(G,o)$ is the subgraph of $G$ induced by the vertices which are at most distance $r$ from $o$.} Finally, we let $\bar{P}_{G,o,r}=I-\Pi_{G,o,r}$. Note that if $r$ is at least the diameter of~$G$, then $\bar{P}_{G,o,r}=\bar{P}_G$.

 Our next theorem provides us a local approximation of the process $U(\mathcal{M}_G)$ for trees.

\begin{theorem}\label{thmlocal}
For any $D,r<\infty$ and  $\varepsilon>0$, we have an $R$ with the following property. Consider any finite tree $G$ with maximum degree at most $D$. Then there is a set of exceptional vertices $V_{ex}$ such that $|V_{ex}|\le \varepsilon|V(G)|$. For any vertex $o\not\in V_{ex}$, if we consider $U(\mathcal{M}_G)$ and the determintal processes corresponding to $\bar{P}_{G,o,R}$, and take their marginals in the window $B_r(G,o)$, then these marginals have total variation distance at most $\varepsilon$. 
\end{theorem}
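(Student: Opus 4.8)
The plan is to reduce the assertion about total variation distance to a spectral comparison of the two projection kernels, and then to control that comparison for most vertices via a positive-temperature regularization. First I would use that the marginal of a determinantal process in a window is again determinantal: if $\mathcal X$ is the determinantal process with contraction kernel $K$ on $V(G)$ and $S:=V(B_r(G,o))$, then $\mathcal X\cap S$ is determinantal with kernel the compression $E_SKE_S$ restricted to $S\times S$, where $E_S$ is the coordinate projection onto $S$. Each atom probability $\Pr(\mathcal X\cap S=Y)$ is a fixed alternating sum of principal minors, hence a polynomial in the entries of $E_SKE_S$ that is Lipschitz on the compact set $\{0\le K\le I\}$ with constant depending only on $|S|$. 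Since $G$ has maximum degree $\le D$ we have $|S|\le N(D,r)$, so there is $c=c(D,r)$ with
\[
d_{\mathrm{TV}}\big(\mathcal X\cap S,\ \mathcal X'\cap S\big)\ \le\ c(D,r)\,\big\|E_S(\bar P_G-\bar P_{G,o,R})E_S\big\|,
\]
where $\mathcal X$ and $\mathcal X'$ are the processes for $\bar P_G$ (Theorem \ref{thmintdet}) and $\bar P_{G,o,R}$. It therefore suffices to make the right-hand kernel discrepancy small for all $o$ outside a set of density $\le\varepsilon$.

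Next I would identify this discrepancy. Since $b_G^0(v)=Ae_v$ for the adjacency matrix $A$, the projection $\Pi_{G,o,R}$ has range $\operatorname{span}\{Ae_v:v\in V(B_R(G,o))\}$, while $\bar P_G=I-\Pi_G$ projects onto $\ker A$. Hence $\bar P_G-\bar P_{G,o,R}=\Pi_{G,o,R}-\Pi_G$ is, up to sign, the orthogonal projection onto the missed subspace
\[
\mathcal N_{o,R}=\big\{\,u\in\operatorname{Im}A:\ (Au)_v=0\ \text{for all } v\in V(B_R(G,o))\,\big\}.
\]
Thus $\big\|E_S(\bar P_G-\bar P_{G,o,R})E_S\big\|$ is small precisely when no $u\in\mathcal N_{o,R}$ carries appreciable mass on $B_r(G,o)$: a vector in the image of $A$ whose $A$-image vanishes on the large ball $B_R(G,o)$ must be small on the concentric small ball. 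This is a quantitative unique-continuation/decay estimate for $A$ at the spectral value $0$, and it is the conceptual heart of the matter; the difficulty is that $0$ lies in the spectrum, so the resolvent governing such decay is singular and the estimate cannot hold uniformly over $o$.

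To get around the singularity I would introduce the temperature $t>0$ and the associated monomer-dimer model, replacing the singular spectral projection at $0$ by a regularized correlation kernel $P_t$ built from a resolvent of $A$ off the real axis. For each fixed $t$, Heilmann-Lieb theory forbids partition-function zeros near the positive reals, which yields exponential off-diagonal decay $|(P_t)_{xy}|\le C e^{-\kappa(t)\,d(x,y)}$ with constants depending only on $t$ and $D$ (a Combes-Thomas estimate gives the same conclusion directly from the tree metric). Running the window comparison of the first paragraph with $P_t$ and its local truncation then produces, for \emph{every} vertex $o$, an error of order $e^{-\kappa(t)(R-r)}$, which for fixed $t$ is $<\varepsilon/3$ once $R$ is large, uniformly over all trees of degree $\le D$.

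The hard part will be passing $t\to0$ while matching $P_t$ with $\bar P_G$ and the temperature-truncation with $\bar P_{G,o,R}$, and this is exactly where the exceptional set $V_{ex}$ is forced, since the decay rate $\kappa(t)\to0$. The key quantitative input is that the \emph{average} over $o$ of the window discrepancy between the temperature-$t$ and zero-temperature marginals tends to $0$ as $t\to0$ at a rate depending only on $D,r$: the average over $o$ of the diagonal discrepancy equals the normalized trace $\tfrac1{|V(G)|}\Tr(P_t-\bar P_G)$, which tends to $0$ because $\Tr\bar P_G$ is the nullity of $A$, equal to the number of uncovered vertices, and the monomer-dimer model converges to the maximum matching as $t\to0$; a windowed version of this average bound follows by the Lipschitz estimate of the first paragraph. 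Markov's inequality then gives a set $V_{ex}$ of density $\le\varepsilon$ off which the temperature-$t$ and $t=0$ window-marginals differ by at most $\varepsilon/3$, and for fixed large $R$ the truncated kernels are finite-dimensional and continuous in $t$, matching $\bar P_{G,o,R}$ up to $\varepsilon/3$ there as well. A triangle inequality over the three comparisons (zero-$t$ versus $t$; $t$ versus its truncation; truncation versus $\bar P_{G,o,R}$) yields the claim. The main obstacle is thus the interchange of the limits $t\to0$ and $R\to\infty$, together with making the average-over-$o$ control uniform in the tree $G$.
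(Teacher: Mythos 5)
Your outer skeleton (reduce window total-variation distance to closeness of kernel entries; note that $\Pi_G-\Pi_{G,o,R}$ is itself an orthogonal projection so that diagonal control gives off-diagonal control; get the diagonal control on average and apply Markov's inequality) is exactly right and matches the paper. But the temperature engine you insert in the middle has two genuine breaks. First, the positive-temperature uncovered set $U(\mathcal{M}_G^z)$ is \emph{not} a determinantal process, so there is no kernel $P_t$ to which your first-paragraph Lipschitz reduction can be applied on the temperature-$t$ side. Take $G$ to be a single edge $\{s,t\}$: then $\mathbb{P}(s\in U)=\mathbb{P}(t\in U)=\mathbb{P}(\{s,t\}\subseteq U)=z^2/(1+z^2)$, and the determinantal identity $\mathbb{P}(\{s,t\}\subseteq U)=K_{ss}K_{tt}-|K_{st}|^2$ would force $|K_{st}|^2=z^4/(1+z^2)^2-z^2/(1+z^2)<0$; uncovered endpoints are positively correlated, while determinantal processes are negatively associated. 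This is precisely why the paper works with $\Delta(\mathcal{M}_G^z)$, the symmetric difference of $U(\mathcal{M}_G^z)$ with one color class (Theorem \ref{postempdet}), and the flip is not cosmetic: it interacts nontrivially with the $z\to 0$ limit (Lemma \ref{strongconv2}: the limiting kernel is $P_1\oplus(I-P_2)$, not the naive one). Relatedly, your key uniformity claim, that $\frac{1}{|V(G)|}\Tr(P_t-\bar{P}_G)\to 0$ at a rate depending only on $D,r$, is true but does not follow from the reason you give ("the monomer-dimer model converges to the maximum matching as $t\to 0$"): that convergence holds for each fixed $G$ with a rate deteriorating in $|V(G)|$ (already for the path $P_{2n}$ one needs $t\lesssim 1/n$ to make $\mathcal{M}_G^t$ concentrate on the maximum matching), so the pointwise statement yields no uniform rate, and the uniform averaged bound needs a separate argument (essentially contained in \cite{bls13}).

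Second, your closing step --- "for fixed large $R$ the truncated kernels are finite-dimensional and continuous in $t$, matching $\bar{P}_{G,o,R}$ up to $\varepsilon/3$" --- fails outright, because orthogonal projections onto spans of perturbed vectors are not continuous where the dimension of the span drops in the limit. Whatever local temperature-$t$ kernel you take, its $t\to 0$ limit is not $\bar{P}_{G,o,R}$: the span of $\{b_G^t(v):v\in V(B_R(G,o))\}$ has dimension $|V(B_R(G,o))|$ for small $t\neq 0$, so any limit of the corresponding projections strictly contains the range of $\Pi_{G,o,R}$ whenever that range has smaller rank; and the ball-resolvent kernel $t^2(t^2+A_{B_R}^2)^{-1}$ converges to the kernel-projection of the ball's own adjacency matrix, whose window marginals can differ macroscopically from those of $\bar{P}_{G,o,R}$ --- this is exactly the even/odd-ball phenomenon the paper flags after Theorem \ref{thmlocal} and computes in Subsection \ref{pelda}, where the root marginal tends to $1/2$ along even balls and to $0$ along odd balls. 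So this leg would need its own most-vertices argument; continuity in $t$ cannot supply it, and your triangle inequality collapses. For contrast, the paper never uses temperature here: it obtains the averaged diagonal estimate from rank approximation (Lemma \ref{rankapprox}, $\rang(G)-\rang_R(G)\le\varepsilon|V(G)|$, proved softly by Benjamini--Schramm compactness together with the rank-continuity theorem of Ab\'ert, Thom and Vir\'ag \cite{abert2013benjamini}), then Markov's inequality, the projection trick, and determinant continuity, with Theorem \ref{thmintdet} identifying $U(\mathcal{M}_G)$ as the $\bar{P}_G$-process.
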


In fact, we will prove the stronger statement that for any graph $G$ with maximum degree at most $D$, the determintal process corresponding to $\bar{P}_G$ can be approximated like above. However, if $G$ is not a tree, then the combinatorial interpretation of this process is not clear.  

Note that the marginal of the determintal processes corresponding $\bar{P}_{G,o,R}$ in the window $B_r(G,o)$ only depends on the $R+1$-neighborhood of $o$. Thus, for most points $o$, the process $U(\mathcal{M}_G)$ can be well approximated in the $r$-neighborhood of $o$ by only looking at the $R+1$-neighborhood of $o$.

Although at first sight, it might seem that the marginal of $\nu_{\bar{P}_{G,o,R}}$ in the ball $B_{R}(G,o)$ should be the same as the law of $U(\mathcal{M}_{B_R(G,o)})$, the situation is a bit more complicated. Let $(S,T)$ be the proper two coloring of $G$ for which $o\in S$. Let $R_{even}$ be the largest even integer which is not larger than $R+1$, and let $R_{odd}$ be the largest odd integer which is not larger than $R+1$. Let $U_{even}\subseteq V(B_{R_{even}}(G,o))$ and $U_{odd}\subseteq V(B_{R_{odd}}(G,o))$ be independent random sets such that $U_{even}$ has the same law as $U(\mathcal{M}_{B_{R_{even}}(G,o)})$ and  $U_{odd}$ has the same law as $U(\mathcal{M}_{B_{R_{odd}}(G,o)})$. Let $U$ have the law $\nu_{\bar{P}_{G,o,R}}$. Then $U\cap V(B_{R+1}(G,o))$ has the same law as $(U_{even}\cap S)\cup (U_{odd}\cap T)$.

A few questions arise naturally:
\begin{itemize}
\item Can the conclusion of Theorem \ref{thmlocal} hold for all vertices? The answer is no, as the following example shows. Let $G$ be an infinite rooted tree with root $o$, where vertices at even distance from the root have one child, and vertices at odd distance from the root have two children. Let $G_n=B_n(G,o)$. Then, as we calculate in Subsection \ref{pelda}, we have
\begin{equation}\label{peldeq1}
\lim_{n\to\infty}\mathbb{P}(o\in U(\mathcal{M}_{G_{2n}}))=\frac{1}{2},
\end{equation}
and
\begin{equation}\label{peldeq2}
\lim_{n\to\infty}\mathbb{P}(o\in U(\mathcal{M}_{G_{2n+1}}))=0,
\end{equation}
which shows that the conclusion of Theorem \ref{thmlocal} can not hold for all vertices.%\footnote{The existence of such an example was mentioned by Bordenave, Lelarge, Salez \cite[Section 4.1]{bls13}. However, the specific example that they provided does not seem to work.}
\item Can we have a theorem similar to Theorem \ref{thmlocal} for $\mathcal{M}_G$ instead of $U(\mathcal{M}_G)$? The answer is again no. The reader might convince themself by following the hint given in Figure \ref{figure1}.\footnote{In fact, path graphs already provide a counter example. However, in that case, the neighborhoods have automorphisms. One can suggest a version of Theorem \ref{thmlocal} for $\mathcal{M}_G$, where we only consider the neighborhoods up to isomorphism. In that case, path graphs do not provide a   counter example anymore.}% where we show two graphs with similar local structures. Both graphs have a unique perfect matching, but locally these matchings look different.

\begin{figure}[h!]
  \captionsetup{width=.8\linewidth}
  \includegraphics[width=\textwidth]{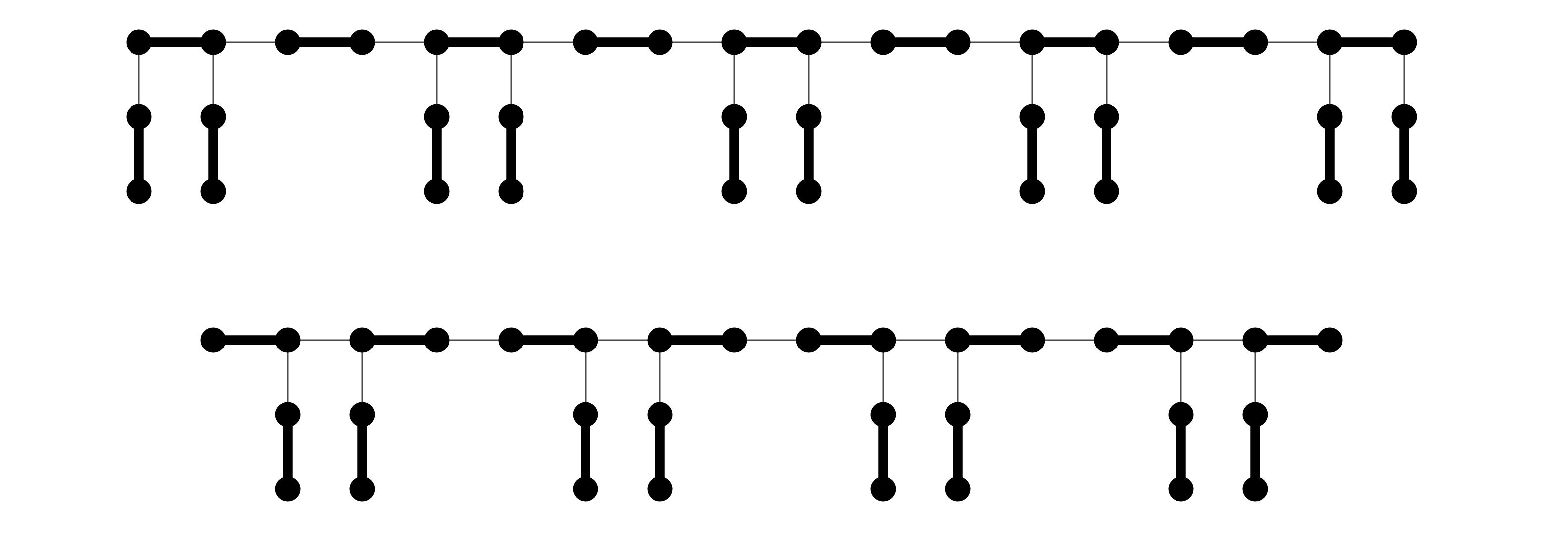}
  \caption{Two trees with similar local structures. Both graphs have a unique perfect matching, but locally these matchings look different.}\label{figure1}
\end{figure}
\item Can we omit the condition in Theorem \ref{thmlocal} that $G$ is a tree? We do not want to require that the approximation is done with a determinantal process, we only ask for a process that only depends on the $R+1$-neighborhood of the given vertex. To our knowledge, this question is open. The special case when $r=0$, that is, when we are only interested in the one vertex marginals, were answered by  Bordenave, Lelarge and Salez \cite{bls13}. In this case, the marginals can be indeed approximated. The discussion right after Theorem \ref{thmlocal} suggest that this question might be considerably easier for bipartite graphs.
\end{itemize}

%    
% Let $G$ be a finite tree. For any matching $M$ of $G$, let $U(M)$ be the set of vertices uncovered by $M$. Let $\mathcal{M}_G$ be uniform random maximum size matching of $G$. 
% Our aim in this paper is to understand the structure of $U(\mathcal{M}_G)$. We show that $U(\mathcal{M}_G)$ is a determinatal process. We also show that  for most vertices, the local structure of $U(\mathcal{M}_G)$ in a small neighborhood of the vertex can be approximated based on a somewhat larger neighborhood of that vertex. Then we show that normalized  Shannon-entropy of $U(\mathcal{M}_G)$ -- which is seemingly a global quantity -- can be also well approximated only from  the local structure of $G$.

% Some of the proofs are based on the well established method of introducing a new perturbative parameter, which we call temperature, and define the positive temperature analogue of  $\mathcal{M}_G$, the so called monomer-dimer model, and let the temperature go to zero.
%    

Given a finite graph $G$, let $\mm(G)$ be the number of maximum size matchings of $G$. Our next theorem is the following.

\begin{theorem}\label{theoremFO}
Let $G_1,G_2,\dots$ be a Benjamini-Schramm convergent sequence of finite trees with maximum degree at most $D$. Then
\[\lim_{n\to\infty}\frac{\log \mm(G_n)}{|V(G_n)|}\]
exists.
\end{theorem}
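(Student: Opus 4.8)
The plan is to first reduce the statement to a question about Shannon entropy, and then to prove that the normalized entropy of the uncovered set is Benjamini--Schramm continuous by combining the determinantal description of Theorem~\ref{thmintdet} with the local approximation of Theorem~\ref{thmlocal}.

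First I would observe that for a forest the map $M\mapsto U(M)$ is injective on maximum size matchings. Indeed, if $U=U(M)$ then the induced subgraph $G[V(G)\setminus U]$ is a forest admitting the perfect matching $M$, and a forest has at most one perfect matching (a leaf is forced to its unique neighbour, one removes the pair and iterates); hence $M$ is recovered from $U$. Since $\mathcal M_G$ is uniform on the $\mm(G)$ maximum size matchings, $U(\mathcal M_G)$ is therefore uniform on $\mm(G)$ distinct sets, so its Shannon entropy satisfies $H(U(\mathcal M_G))=\log\mm(G)$. Thus it suffices to show that $|V(G_n)|^{-1}H(U(\mathcal M_{G_n}))$ converges, and by Theorem~\ref{thmintdet} we may work throughout with the determinantal process $\nu_{\bar P_{G_n}}$.

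Next I would write the entropy as an average of local contributions. Assign to each vertex an independent uniform label $\ell_u\in[0,1]$ and reveal the indicators $X_u=\mathbbm 1[u\in U]$ in increasing order of label; the chain rule gives $H(U)=\sum_{o\in V(G)}h(o)$, where $h(o)=\mathbb E_\ell\big[H\big(X_o\mid (X_u)_{\ell_u<\ell_o}\big)\big]\in[0,\log 2]$. For a radius $r$ define the truncated contribution $h_r(o)=\mathbb E_\ell\big[H\big(X_o\mid (X_u)_{u\in V(B_r(G,o)),\,\ell_u<\ell_o}\big)\big]$, which is a bounded function only of the joint law of $U$ restricted to the window $B_r(G,o)$ (the labels being i.i.d.\ and graph-independent). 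By Theorem~\ref{thmlocal}, applied with window radius $r$ and accuracy $\varepsilon$ to produce $R$, this window marginal is, for every non-exceptional $o$, within total variation $\varepsilon$ of the marginal of $\nu_{\bar P_{G,o,R}}$, which depends only on the isomorphism type of $B_{R+1}(G,o)$. Since the window has bounded size ($r$ and $D$ being fixed) and conditional entropy is continuous in total variation, for non-exceptional $o$ the quantity $h_r(o)$ is within $\delta(\varepsilon)$, with $\delta(\varepsilon)\to 0$ as $\varepsilon\to 0$, of a bounded local function $\phi_{r,R}$ of $B_{R+1}(G,o)$. Consequently $|V(G_n)|^{-1}\sum_o\phi_{r,R}(B_{R+1}(G_n,o))$ is the expectation of a bounded local observable under the random rooted neighbourhood of $G_n$, and hence converges as $n\to\infty$ by Benjamini--Schramm convergence.

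The main obstacle is to control the truncation error $|V(G)|^{-1}\sum_o\big(h(o)-h_r(o)\big)$ uniformly in $n$ and to show it tends to $0$ as $r\to\infty$. Here $h(o)-h_r(o)=\mathbb E_\ell\big[I\big(X_o;(X_u)_{u\notin V(B_r(G,o)),\,\ell_u<\ell_o}\mid (X_u)_{u\in V(B_r(G,o)),\,\ell_u<\ell_o}\big)\big]\ge 0$, so this is precisely an averaged conditional mutual information between a vertex and the sites far from it, that is, a decay-of-correlations estimate for the determinantal process $\nu_{\bar P_G}$. This is where the projection structure of $\bar P_G$ and the tree hypothesis are essential: I would establish the required decay through the off-diagonal decay of the resolvent of the adjacency matrix of a tree, regularizing via the positive-temperature monomer--dimer model (working with $(A_G-z)^{-1}$ for $z\neq 0$, whose entries decay exponentially in the graph distance) and then letting the temperature $z\to 0$, exactly the perturbative device advertised in the introduction. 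Granting this, a diagonal argument finishes the proof: for each $r$ and the resulting $R$ the main local average converges in $n$, the exceptional-vertex and total-variation errors are $O(\varepsilon+\delta(\varepsilon))$, and the truncation error vanishes as $r\to\infty$; letting $\varepsilon\to 0$ and then $r\to\infty$ shows that $\limsup_n$ and $\liminf_n$ of $|V(G_n)|^{-1}\log\mm(G_n)$ coincide, so the limit exists.
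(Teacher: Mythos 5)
Your first two steps are sound and coincide with the paper's own route: the injectivity of $M\mapsto U(M)$ on a tree gives $\log\mm(G)=H(U(\mathcal{M}_G))$ (this is Proposition \ref{propuniq}), Theorem \ref{thmintdet} converts the problem into one about the determinantal process of $\bar{P}_G$, and your chain-rule decomposition $H(U)=\sum_o h(o)$ with i.i.d.\ labels is exactly the formula appearing in Remark \ref{remarklim}; the convergence of the truncated averages via Theorem \ref{thmlocal} is also fine, since entropy on a window of boundedly many vertices is uniformly continuous in total variation. The genuine gap is the step you explicitly defer (``Granting this\dots''): the uniform-in-$n$ vanishing, as $r\to\infty$, of the truncation error $|V(G_n)|^{-1}\sum_o\bigl(h(o)-h_r(o)\bigr)$. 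This is not a technicality but the crux of the theorem, and it is precisely what the paper outsources: it proves Theorem \ref{projconv} (convergence of $(G_n,\bar{P}_{G_n})$ in the \rwpc\ topology) and then invokes Theorem \ref{entdetcont}, i.e.\ \cite[Theorem 2.4]{meszaros2019limiting}, whose entire content is that this long-range conditional information can be controlled for determinantal processes given by projections. Note that closeness of window marginals alone can never suffice: take $2n$ i.i.d.\ fair bits on a path versus the process in which bit $i$ is forced equal to bit $i+n$, each pair uniform. For any fixed radius, all window marginals of the two processes eventually agree exactly (both are i.i.d.\ in every window), yet the normalized entropies converge to $\log 2$ and $\tfrac12\log 2$ respectively. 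So some structural input beyond Theorem \ref{thmlocal} is indispensable, and your proof supplies none.

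Moreover, the mechanism you sketch for this missing step does not work as stated. Exponential off-diagonal decay of $(A_G-z)^{-1}$ (Combes--Thomas) requires $z$ to lie outside the spectrum, with a rate degenerating as $z$ approaches it; for growing bounded-degree trees the eigenvalues become dense in the limiting spectrum (for paths, dense in $[-2,2]$), so for fixed small real $z\neq 0$ the resolvents $\bigl(A_{G_n}-z\bigr)^{-1}$ do not admit any decay bound uniform in $n$, let alone one surviving the $z\to 0$ limit. The zero-temperature kernel itself rules out this strategy: on a path with $2n+1$ vertices, $\ker A$ is spanned by the unit vector with entries $\pm 1/\sqrt{n+1}$ on alternate vertices, so $|\langle\bar{P}_G\chi_o,\chi_x\rangle|=1/(n+1)$ for \emph{all} even-distance pairs --- constant in the distance, with no exponential decay whatsoever. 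Finally, even granting some entry decay, you give no argument converting bounds on pairwise kernel entries into bounds on the averaged conditional mutual information $h(o)-h_r(o)$, where one conditions on an unbounded set of faraway coordinates; no standard estimate does this. To complete your argument you would either have to reprove \cite[Theorem 2.4]{meszaros2019limiting} (a substantial piece of work exploiting the projection structure) or cite it, together with the \rwpc-convergence statement of Theorem \ref{projconv} that licenses its application --- which is exactly the paper's proof.
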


The notion of Benjamini-Schramm convergence was introduced in \cite{benjamini2001recurrence}, see also \cite{aldous2007processes}. We recall it in Subsection \ref{BSconv}. Several graph parameters are known to be continuous with respect to Benjamini-Schramm convergence, for example: the normalized size of the maximum size matching \cite{elek2010borel,nguyen2008constant,bls13,abert2016matchings}, the normalized logarithm of the number of spanning trees \cite{lyons2005asymptotic}, the normalized rank of the adjacency matrix \cite{abert2013benjamini}.

Note that without the assumption that the graphs $G_i$ are trees, the limit in Theorem \ref{theoremFO} might not exist, even if the sequence converges to an amenable graph like $\mathbb{Z}^2$. We can see this by comparing the results of \cite{kasteleyn1961statistics,temperley1961dimer,elkies1992alternating}. See also \cite{abert2016matchings}, for an example of a Benjamini-Schramm convergent sequence of bipartite $d$-regular graphs such that the limit above does not exist.  However, if we restrict our attention to vertex transitive bipartite graphs, the limit above exists for convergent graph sequences, as it was proved by Csikv\'ari \cite{csikvari2016matchings}. The limit also exists for a sequence of bipartite $d$-regular large girth graphs \cite{abert2016matchings}.  

%Given a matching $M$, let $U(M)$ be the vertices uncovered by $M$. 
The first step in the proof of Theorem \ref{theoremFO} is the following simple observation. 
\begin{proposition}\label{propuniq}
Let $G$ be a finite tree. Then any matching $M$ of $G$ can be uniquely reconstructed from $U(M)$.
\end{proposition}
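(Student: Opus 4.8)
The plan is to show that the map $M \mapsto U(M)$ is injective by exhibiting an explicit reconstruction. First I would note that $U(M)$ immediately determines its complement $C := V(G) \setminus U(M)$, which is precisely the set of vertices covered by $M$. Since every edge of $M$ has both endpoints in $C$ and every vertex of $C$ is an endpoint of some edge of $M$, the matching $M$ is a \emph{perfect} matching of the induced subgraph $G[C]$. Thus $M$ is fully determined by $U(M)$ as soon as we know that $G[C]$ admits at most one perfect matching. Because $G$ is a tree, $G[C]$ is a forest, so the entire proposition reduces to the following classical fact.

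\textbf{Key step.} Every forest has at most one perfect matching. To prove this I would induct on the number of vertices. A forest admitting a perfect matching has no isolated vertex (an isolated vertex could never be covered), so if it is nonempty it contains a leaf $\ell$, that is, a vertex of degree exactly $1$. Let $u$ be the unique neighbor of $\ell$. In any perfect matching the vertex $\ell$ must be matched, and its only possible partner is $u$; hence the edge $\ell u$ belongs to \emph{every} perfect matching of the forest. Deleting $\ell$ and $u$ yields a smaller forest whose perfect matchings are in bijection with the perfect matchings of the original forest, and the inductive hypothesis then forces uniqueness.

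Combining the two steps, $M$ is the unique perfect matching of the forest $G[C]$ and is therefore recoverable from $U(M)$, which proves the proposition. The argument is entirely elementary; the only point requiring a moment's care is the existence of a leaf, namely that a nonempty forest with no isolated vertices always has a vertex of degree one. This is where I expect the (minor) obstacle to lie, but it follows at once from the standard fact that any nonempty forest has a vertex of degree at most one, which cannot have degree $0$ once isolated vertices are excluded.
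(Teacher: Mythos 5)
Your proof is correct, but it takes a genuinely different route from the paper's. The paper argues by injectivity directly: given two matchings $M_1, M_2$ with $U(M_1)=U(M_2)$, their symmetric difference is a disjoint union of paths and cycles, and the hypothesis on uncovered sets rules out path components (an endpoint of a path component would be covered by exactly one of the two matchings); since a tree has no cycles, the symmetric difference is empty. You instead give an explicit reconstruction: $U(M)$ determines the covered set $C$, the matching $M$ is a perfect matching of the induced forest $G[C]$, and a forest has at most one perfect matching by the leaf-deletion induction (a leaf's partner is forced). The paper's argument is shorter and is the standard alternating-path trick, which also yields the general-graph statement that matchings with equal uncovered sets differ by a union of alternating cycles; your argument is more constructive, since it produces an actual algorithm for recovering $M$ from $U(M)$ by repeatedly matching leaves of $G[C]$, and it isolates a reusable classical lemma (uniqueness of perfect matchings in forests). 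Both are complete and elementary; the one point you flagged as delicate --- that a nonempty forest with no isolated vertices has a leaf --- is handled correctly.
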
 

It follows from Proposition \ref{propuniq} that $\log\mm(G)=H(U(\mathcal{M}_G))$, where $H$ denotes the Shannon entropy. 
% The next theorem shows that $U(\mathcal{M})$ is a determinantal process.
% \begin{theorem}\label{thmintdet}
% Let $G$ be a finite tree. Let $\bar{P}=\bar{P}_G=(\bar{p}_{ij})_{i,j\in V}$ be the matrix of the orthogonal projection to the kernel of the adjacency matrix of $G$. Then for any subset $F$ of $V(G)$, we have
% \[\mathbb{P}(F\subseteq U(\mathcal{M}))=\det (\bar{p}_{ij})_{i,j\in F}.\]   
% In other words, $U(\mathcal{M})$ is the determinantal process corresponding to the orthogonal projection~$\bar{P}$. 
% \end{theorem}  

Next we show that in the settings of Theorem \ref{theoremFO}, the pairs $(G_n,\bar{P}_{G_n})$ converge in a certain topology defined in \cite{meszaros2019limiting}. In fact, this statement is just a more abstract version of Theorem~\ref{thmlocal}, and again this is true for general convergent graph sequences not just for trees. Then it follows from the continuity results of the paper \cite{meszaros2019limiting}, that the normalized Shannon entropies of the determinantal processes corresponding to the orthogonal projections $\bar{P}_{G_n}$ converge. In other words, 
\[\frac{H(U(\mathcal{M}_n))}{|V(G_n)|}=\frac{\log \mm(G_n)}{|V(G_n)|}\]
converges, where $\mathcal{M}_n$ is a uniform random maximum size matching of $G_n$.

We also note that for various subgraphs of $\mathbb{Z}^2$, there are really strong results about the statistics of perfect matchings \cite{kasteleyn1961statistics,temperley1961dimer,elkies1992alternating,mh1,mh2,mh3,mh4}. Also, the positive temperature analogues of the questions above, that is, questions about the monomer-dimer models,  are also well understood even for general graphs not just for trees \cite{abert2015matching,bls13}.

Besides the already mentioned results
\begin{itemize}
\item We also obtain an analogue of Theorem \ref{thmintdet} for positive temperature Boltzmann random matchings.
\item We  extend  these results to infinite tress with uniformly bounded degrees. 
\item We also show that the entries of the corresponding projection matrices can be expressed with the help of the solution of a certain fixed point equation.
\item As an example, we give an explicit formula for the limit in Theorem \ref{theoremFO} in the special case of the balls of the $d$-regular infinite tree.
\end{itemize}
\textbf{Acknowledgements.} The author is grateful to Mikl\'os Ab\'ert and P\'eter  Csikv\'ari for their comments. The author was partially
supported by the ERC Consolidator Grant 648017. 
\section{Definitions and statements of the results}
\subsection{Benjamini-Schramm convergence}\label{BSconv}

Fix a finite degree bound $D$. Throughout the paper, we only consider graphs where the maximum degree is at most $D$.

 A \emph{rooted graph} is a pair $(G,o)$, where $G$ is a (possibly infinite) connected graph with uniform degree bound $D$, and $o$ is a distinguished vertex of $G$ called the root. Given two rooted graphs $(G_1,o_1)$ and $(G_2,o_2)$, their distance is defined as the infimum over all $\varepsilon>0$ such that for $r=\lfloor \varepsilon^{-1}\rfloor$, we have a root preserving isomorphism between $B_r(G_1,o_1)$ and $B_r(G_2,o_2)$. Here $B_r(G,o)$ denotes the rooted graph induced by the vertices of $G$ which are  at most distance $r$ from the root $o$. Let $\mathcal{G}_D$ be the set of isomorphism classes of rooted graphs. Endowed with the above defined metric, $\mathcal{G}_D$ is a compact metric space. This also gives us a measurable structure on $\mathcal{G}_D$. A sequence of random rooted graph $(G_n,o_n)$ Benjamini-Schramm converges to a random rooted graph $(G,o)$, if the distribution of $(G_n,o_n)$ converges to that of $(G,o)$ in the weak* topology.

Every finite graph $G$ can be turned into a random rooted graph $U(G)=(G_o,o)$ where $o$ is a uniform random vertex of $G$, and $G_o$ is the connected component of $o$ in the graph $G$. A sequence of finite graphs $G_n$ Benjamini-Schramm converges to the random rooted graph $(G,o)$ if $U(G_n)$ Benjamini-Schramm converges to $(G,o)$.     
 
\subsection{Preliminaries on determinantal measures}

We repeat the definition from the Introduction in slightly more general setting. Let $V$ be a finite or countably infinite set. Let $H$ be a closed subspace of the Hilbert-space $\ell^2(V)$. Let $P=(p_{ij})_{i,j\in V}$ be the matrix of the orthogonal projection to $H$ in the standard bases. Then there is a unique probability measure $\nu_H$ on the subsets of $V$ with the following property. For any finite subset $F$ of $V$, we have
\[\nu_H(\{X|F\subseteq X\subseteq V\})=\det (p_{ij})_{i,j\in F}.\]
This unique probability measure is called the determinantal probability measure corresponding to $H$( or $P$). A determinantal process corresponding to $H$( or $P$) is a random subset of $V$ with law $\nu_H$.

  If $V$ is finite, we have an alternative way to describe this measure. Let $B$ be a matrix, where the columns are indexed with $V$, moreover, the rows of $B$ form a bases of $H$. For $X\subseteq V$, let $B[X]$ be the submatrix of $B$ determined by the columns corresponding to the elements of $X$.   
\begin{samepage}
\begin{lemma}\label{altdef}
Let $r$ be the dimension of $H$. Consider  $X\subseteq V$. 
\begin{itemize}
\item If $|X|=r$, then 
\[\nu_H(\{X\})=\frac{|\det B[X]|^2}{\det (BB^T)}.\]
\item If $|X|\neq r$, then $\nu_H(\{X\})=0$.
\end{itemize}
\end{lemma}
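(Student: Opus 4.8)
The plan is to derive the single-set probabilities $\nu_H(\{X\})$ from the defining containment probabilities $\nu_H(\{X'\mid X\subseteq X'\})=\det (p_{ij})_{i,j\in X}$, and then to rewrite the resulting principal subdeterminants of $P$ in terms of $B$. Throughout, write $P_F:=(p_{ij})_{i,j\in F}$ for the principal submatrix of $P$ on a subset $F\subseteq V$. The one structural fact I would isolate first is that the determinantal process has \emph{exactly $r$ points} almost surely; once that is in hand, both bullets follow quickly.

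To prove the exactly-$r$-points fact I would argue by two inequalities. For the upper bound, any $F$ with $|F|>r$ satisfies $\det P_F=0$, since $P_F$ has rank at most $\rang P=r<|F|$; hence $\nu_H(\{X'\mid F\subseteq X'\})=0$ for every such $F$, and a union bound over the finitely many $(r+1)$-subsets gives $|X|\le r$ almost surely. For the lower bound, applying the defining property to singletons gives $\nu_H(\{X'\mid v\in X'\})=p_{vv}$, so the expected number of points equals $\sum_{v\in V}p_{vv}=\Tr P=r$; combined with $|X|\le r$ this forces $|X|=r$ almost surely. This already settles the second bullet: if $|X|\neq r$ then $\nu_H(\{X\})=0$.

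For the first bullet, fix $X$ with $|X|=r$. Because the process has exactly $r$ points, the event $\{X'\mid X\subseteq X'\}$ and the singleton event $\{X'\mid X'=X\}$ have symmetric difference contained in $\{X'\mid |X'|>r\}$, a null event; hence $\nu_H(\{X\})=\nu_H(\{X'\mid X\subseteq X'\})=\det P_X$. It then remains to express $\det P_X$ through $B$. Since the rows of $B$ form a basis of $H$, the matrix $B$ has full row rank $r$, so $BB^T$ is invertible and the orthogonal projection onto $H=\range B^T$ is $P=B^T(BB^T)^{-1}B$. Restricting to the columns indexed by $X$ gives $P_X=B[X]^T(BB^T)^{-1}B[X]$, a product of three $r\times r$ matrices; taking determinants and using multiplicativity yields $\det P_X=(\det B[X])^2/\det(BB^T)=|\det B[X]|^2/\det(BB^T)$, as claimed.

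The only genuinely structural input is the exactly-$r$-points fact; everything after it is bookkeeping with the defining property together with the standard formula for the projection onto a row space. The single place I would take care is the identity $P=B^T(BB^T)^{-1}B$ and the invertibility of $BB^T$, both of which rest precisely on $B$ having linearly independent rows, so this is where the hypothesis on $B$ is actually consumed. (In the complex setting one replaces $B^T$ by the conjugate transpose throughout and reads $(\det B[X])^2$ as $|\det B[X]|^2$; over $\mathbb{R}$ the two coincide, which is why the statement is phrased with the absolute value.)
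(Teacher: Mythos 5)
Your proof is correct, and the comparison here is one-sided: the paper does not prove Lemma \ref{altdef} at all, but simply refers the reader to Lyons \cite{lyons}, so your argument is a genuine addition rather than a variant of the paper's proof. Your route is the natural elementary one and every step checks out: (i) the exactly-$r$-points fact, obtained by combining the rank bound (any principal submatrix $P_F$ with $|F|>r$ has rank at most $\rang P=r$, hence $\det P_F=0$, and a union bound over the finitely many $(r+1)$-subsets gives $|X|\le r$ a.s.) with the trace identity $\sum_{v}p_{vv}=\Tr P=r$, which forces $|X|=r$ a.s.\ and settles the second bullet; (ii) for $|X|=r$, the containment event $\{X'\mid X\subseteq X'\}$ and the singleton $\{X\}$ differ by a null set, so $\nu_H(\{X\})=\det P_X$; (iii) the projection formula $P=B^T(BB^T)^{-1}B$, which is where the hypothesis that the rows of $B$ form a basis of $H$ is consumed (full row rank makes $BB^T$ invertible), yielding $P_X=B[X]^T(BB^T)^{-1}B[X]$ and hence $\det P_X=|\det B[X]|^2/\det(BB^T)$ by multiplicativity over the three $r\times r$ factors. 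What each approach buys: the paper's citation keeps its preliminaries short and lets it quote the lemma in the generality treated by Lyons, while your argument makes the statement self-contained and exposes exactly which hypotheses are used where --- finiteness of $V$ for the union bound and the trace computation, and linear independence of the rows of $B$ for the invertibility of $BB^T$.
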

\end{samepage}
For the proof and more information on determinatal measures, see the work of Lyons \cite{lyons}.

\subsection{Boltzmann random matchings}

For a finite graph $G$, let $\mathbb{M}(G)$ be the set of matching of $G$. Let $z>0$ be a positive real parameter called the temperature. Let $\mathcal{M}_G^z$ be a random matching of $G$ such that for any $M\in \mathbb{M}(G)$, we have

\[\mathbb{P}(\mathcal{M}_G^z=M)=\frac{z^{|V(G)|-2|M|}}{P_G(z)},\] where $P_G(z)=\sum_{M\in\mathbb{M}(G)} z^{|V(G)|-2M}$. The random matching $\mathcal{M}_G^z$ is called the Boltzmann random matching at temperature $z$.

Now we extend these definitions to a countably infinite graph $G$ with uniformly bounded degrees. Let $V_1\subseteq V_2\subseteq \dots$ be a sequence of finite subsets of $V(G)$ such that $\cup_{n=1}^\infty V_n=V(G)$. Let $G_n$ be the finite subgraph of $G$ induced by $V_n$. We call the sequence $(G_n)$ an exhaustion of~$G$. Since $E(G_n)\subseteq E(G)$, we can consider $\mathcal{M}_{G_n}^z$ as a random matching of $G$.

\begin{theorem}[Bordenave, Lelarge and Salez \cite{bls13}]
Let $G$ be a countably infinite graph with uniformly bounded degrees, and let $(G_n)$ be an exhaustion of $G$. Then $\mathcal{M}_{G_n}^z$ converge in law to a random matching $\mathcal{M}_G^z$. The law of  $\mathcal{M}_G^z$ does not depend on the chosen exhaustion.
\end{theorem}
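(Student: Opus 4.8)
The plan is to view a matching of $G$ as its indicator vector in $\{0,1\}^{E(G)}$, so that the space of all subsets of $E(G)$ becomes a compact metrizable space under the product topology; on such a space, weak convergence of probability measures is equivalent to convergence of the probabilities of all cylinder events. Since two edges sharing a vertex can never both lie in a matching, a cylinder event is determined by inclusion–exclusion from the events $\{S\subseteq \mathcal{M}\}$ as $S$ ranges over finite partial matchings of $G$. Hence it suffices to prove that for every finite partial matching $S$ the inclusion probability $\mathbb{P}(S\subseteq \mathcal{M}_{G_n}^z)$ converges as $n\to\infty$, and that its limit does not depend on the exhaustion $(G_n)$.

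First I would record an exact expression for these inclusion probabilities. For a finite graph $H$ and a partial matching $S$ of $H$, the map $M\mapsto M\setminus S$ is a weight-preserving bijection between matchings of $H$ containing $S$ and matchings of $H\setminus V(S)$, whence
\[\mathbb{P}(S\subseteq \mathcal{M}_{H}^z)=\frac{P_{H\setminus V(S)}(z)}{P_{H}(z)}.\]
Peeling off the endpoints of the edges of $S$ one vertex at a time, this ratio factors into finitely many single-vertex ratios of the form $R_{H'}(w):=P_{H'\setminus w}(z)/P_{H'}(z)$, where each $H'$ is $H$ with a bounded number of vertices deleted and each $w$ is a vertex incident to $S$. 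The whole problem thus reduces to understanding the single-vertex ratios $R_H(w)$.

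The key structural input is the cavity recursion: if $w$ has neighbors $u_1,\dots,u_d$ in $H$, then conditioning on the status of $w$ gives $P_H(z)=zP_{H\setminus w}(z)+\sum_{i}P_{H\setminus\{w,u_i\}}(z)$, and therefore
\[R_H(w)=\frac{1}{\,z+\sum_{i=1}^d R_{H\setminus w}(u_i)\,},\]
so that in particular every such ratio lies in $[0,1/z]$. Iterating unfolds $R_H(w)$ into the value of the same recursion on Godsil's path tree of $H$ rooted at $w$. Because $x\mapsto 1/(z+x)$ is decreasing, truncating this tree at even versus odd depth produces two monotone families of bounds that sandwich $R_H(w)$; at positive temperature $z>0$ the composed recursion is a contraction along each branch, so the gap between the bounds closes geometrically in the truncation depth. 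This is exactly the statement that the monomer–dimer model has exponential decay of correlations: there are constants $C<\infty$ and $\lambda<1$, depending only on $z$ and the degree bound $D$, such that $R_H(w)$ is determined up to error $C\lambda^r$ by the isomorphism type of the ball $B_r(H,w)$.

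I would then conclude as follows. Fix $S$ and $\varepsilon>0$, and choose $r$ with $C\lambda^r<\varepsilon$. Each single-vertex factor $R_{H'}(w)$ appearing in the factorization of $\mathbb{P}(S\subseteq \mathcal{M}_{G_n}^z)$ is, up to error $\varepsilon$, a function only of $B_r(H',w)$, where $H'$ is $G_n$ with a fixed finite set of vertices deleted; since $(G_n)$ exhausts $G$, for all large $n$ these balls are isomorphic to the corresponding balls in $G$ and hence stabilize. Therefore the inclusion probabilities form a Cauchy sequence and converge, and their limit is a function of bounded neighborhoods of $S$ in $G$ alone, so it is the same for every exhaustion. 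By the first paragraph this gives convergence in law to a random matching $\mathcal{M}_G^z$ whose law is exhaustion-independent. The main obstacle is the correlation-decay estimate of the third paragraph: the naive pointwise bound $|(x\mapsto 1/(z+x))'|\le 1/z^2$ only gives contraction for $z>1$, so for all $z>0$ one must work on the path tree and exploit either the alternating monotone bounds together with a Jacobian-product estimate down the tree, or the Heilmann–Lieb theorem on the real-rootedness and zero-free region of the matching polynomial, to obtain a contraction valid at every positive temperature.
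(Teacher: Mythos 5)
Note first that the paper does not prove this statement at all: it is imported as a black box from Bordenave, Lelarge and Salez \cite{bls13}, so there is no internal proof to compare against, and the only meaningful benchmark is the cited source, whose overall strategy (local convergence via correlation decay of the monomer--dimer model) your outline reproduces. Your reductions are all correct: weak convergence on the compact space $\{0,1\}^{E(G)}$ is equivalent to convergence of the inclusion probabilities $\mathbb{P}(S\subseteq\mathcal{M}^z_{G_n})$ over finite partial matchings $S$; the identity $\mathbb{P}(S\subseteq\mathcal{M}^z_H)=P_{H\setminus V(S)}(z)/P_H(z)$ holds by the weight-preserving bijection you describe; and the telescoping into single-vertex ratios, the cavity recursion $R_H(w)=\bigl(z+\sum_i R_{H\setminus w}(u_i)\bigr)^{-1}$, the a priori bound $R_H(w)\in[0,1/z]$, the unfolding onto the self-avoiding-walk tree, and the even/odd monotone sandwich are all sound.

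The genuine gap is the one you flag yourself, and it is larger than your closing remarks suggest. What your argument actually establishes is a sandwich $R^-\le R_H(w)\le R^+$, where $R^\pm$ are the monotone limits of the odd/even truncations with extremal boundary data; the whole content of the theorem is that this sandwich closes, and without that your argument only localizes the ratios in $[R^-,R^+]$ and cannot conclude convergence, let alone exhaustion-independence. Moreover, your first proposed remedy (a Jacobian-product estimate ``down the tree'') fails as stated when $z$ is small: the sensitivity of the root value to a boundary value at depth $r$ is $\prod_j y_j^2$ along the connecting path, and the pairing bound $y_jy_{j+1}\le 1/(1+z^2)$ gives path products of order $z^{-2}(1+z^2)^{-r}$, while the number of self-avoiding walks of length $r$ can be of order $(D-1)^r$; summing over branches therefore yields decay only when $z^2>D-2$, not for every positive temperature. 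Closing the sandwich for all $z>0$ genuinely requires either the potential-function (change-of-variables) argument of Bayati, Gamarnik, Katz, Nair and Tetali, which exploits the fact that a vertex with many children necessarily has a small ratio, or the Heilmann--Lieb theorem that all zeros of $P_H$ are purely imaginary, which gives uniform analyticity and exponential correlation decay for $\operatorname{Re} z>0$. Either ingredient can be quoted or proved; with it supplied, your argument is a complete and correct proof, but as written the central estimate of the theorem is asserted rather than established.
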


We call the random matching $\mathcal{M}_G^z$ provided by the previous lemma the Boltzmann random matching at temperature $z$.

Recall that for a matching $M$, the set of vertices uncovered by $M$ is denoted by $U(M)$.  
The next theorem provides the existence of the zero temperate limit of $U(\mathcal{M}_G^z)$. 

\begin{theorem}[Bordenave, Lelarge and Salez \cite{bls13}]
Let $G$ be a finite or countably infinite graph with uniformly bounded degrees. Then $U(\mathcal{M}_G^z)$ converge in law to a random subset $\mathcal{U}_G$ of $V(G)$ as $z\to 0$.
\end{theorem}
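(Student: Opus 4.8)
I would split the argument into the finite and the infinite case.

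\emph{The finite case.} Here the statement is elementary and I would dispatch it by a direct computation. Since $|V(G)|-2|M|=|U(M)|$, the Boltzmann weight of a matching is $z^{|U(M)|}$, so $P_G(z)=\sum_j m_j z^{j}$, where $m_j$ is the number of matchings with exactly $j$ uncovered vertices. For a fixed $S\subseteq V(G)$ one has $\mathbb{P}(U(\mathcal{M}_G^z)=S)=N(S)\,z^{|S|}/P_G(z)$, where $N(S)$ is the number of matchings whose uncovered set is exactly $S$ (equivalently, the perfect matchings of $G\setminus S$). Writing $u_0=\min_M |U(M)|$ for the deficiency, parity forces
\[P_G(z)=m_{u_0}z^{u_0}\bigl(1+O(z^2)\bigr),\]
so as $z\to 0$ we get $\mathbb{P}(U(\mathcal{M}_G^z)=S)\to N(S)/m_{u_0}$ when $|S|=u_0$ and $\to 0$ otherwise. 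This limiting law is exactly that of $U(\mathcal{M}_G)$, the uncovered set of a uniform maximum size matching; in particular $\mathcal{U}_G$ exists and is identified, which also dovetails with Theorem~\ref{thmintdet}.

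\emph{The infinite case: reduction to correlation functions.} For countably infinite $G$ I would first use that $\{0,1\}^{V(G)}$ with the product topology is compact metrizable, so the laws $\nu_z$ of $U(\mathcal{M}_G^z)$ always have weak* limit points; it therefore suffices to show that all subsequential limits coincide. Because the cylinder events determine the law and, by Möbius inversion on the subset lattice,
\[\mathbb{P}(X\cap W=A)=\sum_{A\subseteq B\subseteq W}(-1)^{|B\setminus A|}\,\mathbb{P}(B\subseteq X),\]
it is enough to prove that each correlation function $\rho_z(F):=\mathbb{P}\bigl(F\subseteq U(\mathcal{M}_G^z)\bigr)$ converges as $z\to 0$, for every finite $F\subseteq V(G)$. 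Since the matchings avoiding $F$ are precisely the matchings of $G_n\setminus F$, a one-line computation on any finite truncation gives
\[\mathbb{P}\bigl(F\subseteq U(\mathcal{M}_{G_n}^z)\bigr)=z^{|F|}\,\frac{P_{G_n\setminus F}(z)}{P_{G_n}(z)}.\]
Taking an exhaustion $(G_n)$ with $F\subseteq V(G_n)$ and using that $\{X:F\subseteq X\}$ is clopen, the convergence $\mathcal{M}_{G_n}^z\to\mathcal{M}_G^z$ in law established above yields $\rho_z(F)=\lim_n \mathbb{P}\bigl(F\subseteq U(\mathcal{M}_{G_n}^z)\bigr)$.

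\emph{The crux: monotonicity in the temperature.} The remaining, and I expect main, point is that $z\mapsto\rho_z(F)$ has a limit as $z\to 0$. A naive attempt to exchange the limits $n\to\infty$ and $z\to 0$ fails: the zero-temperature truncations $U(\mathcal{M}_{G_n})$ need not converge in law, since boundary effects can make the maximum matchings of the truncations oscillate (this is precisely the phenomenon exhibited by the example of \eqref{peldeq1}--\eqref{peldeq2}). Instead I would establish that the law of $U(\mathcal{M}_G^z)$ is stochastically non-decreasing in $z$—heuristically, raising the temperature can only create more monomers—so that every up-set probability, in particular each $\rho_z(F)$, is non-decreasing in $z$. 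For a finite graph this is a monomer--dimer correlation inequality, which I would obtain either from a Holley-type monotone coupling of the uncovered sets across two temperatures, or from the real-rootedness of the matching polynomial (Heilmann--Lieb) and the resulting monotonicity of the monomer probabilities. Monotonicity survives the pointwise limit over the exhaustion, so $z\mapsto\rho_z(F)$ is monotone and bounded and hence converges as $z\to 0^+$. Convergence of every $\rho_z(F)$ pins down a unique weak* limit point, which is the desired law of $\mathcal{U}_G$. The technical heart is thus the correlation inequality giving monotonicity in $z$ \emph{uniformly} in the truncation; the compactness and inclusion--exclusion steps around it are soft.
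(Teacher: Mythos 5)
First, a point of comparison: the paper never proves this statement at all --- it is imported as a black box from \cite{bls13}. The closest in-paper relative is the proof of Theorem \ref{thmzerotdet}, where, \emph{for trees only}, convergence in law of $U(\mathcal{M}_G^z)$ as $z\to 0$ is in effect re-derived by an operator-theoretic mechanism (norm convergence of the projections $P^z_{G,S,T}$ as $z\to 0$, Lemma \ref{strongconv}, fed into Proposition \ref{propSOT}). Your argument instead targets the full generality of the statement (arbitrary graphs with uniformly bounded degrees), so it must be judged on its own merits rather than against an in-paper proof.

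On those merits, your outline is correct. The finite case is a complete elementary computation, and the reduction of the infinite case is also right: $\{M : F\subseteq U(M)\}$ is a clopen cylinder event (it depends only on the finitely many edges meeting $F$), so the previously quoted convergence $\mathcal{M}_{G_n}^z\to\mathcal{M}_G^z$ gives $\rho_z(F)=\lim_n z^{|F|}P_{G_n\setminus F}(z)/P_{G_n}(z)$, and the M\"obius-inversion step is a finite sum, so convergence of all $\rho_z(F)$ does pin down a unique weak limit. The crux, monotonicity of $\rho_z(F)$ in $z$, is true, and your second suggested route (real-rootedness) can be made to work cleanly: enumerating $F=\{v_1,\dots,v_m\}$ and setting $G_k=G-\{v_1,\dots,v_k\}$, telescope
\[z^{|F|}\,\frac{P_{G\setminus F}(z)}{P_G(z)}=\prod_{k=1}^{m} z\,\frac{P_{G_k}(z)}{P_{G_{k-1}}(z)},\]
so that each factor is the single-vertex exposure probability $\mathbb{P}\bigl(v_k\in U(\mathcal{M}^z_{G_{k-1}})\bigr)$. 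Writing $P_{G'}(z)=z^{|V(G')|-2\nu(G')}\prod_j(z^2+\theta_j^2)$ with $\theta_j$ the positive roots of the matching polynomial, Heilmann--Lieb/Godsil interlacing of the roots of $G'-v$ with those of $G'$ shows that each factor has nonnegative logarithmic derivative in $z>0$ (checking separately the case $\nu(G'-v)=\nu(G')$ and the case $\nu(G'-v)=\nu(G')-1$), hence is non-decreasing; a product of nonnegative non-decreasing functions is non-decreasing, and monotonicity passes to the exhaustion limit. This closes the proof.

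The one genuine weak spot is your first alternative for the crux. A Holley-type coupling does not work as stated: the uncovered-set measure $\mu_z(S)\propto N(S)z^{|S|}$ is not strictly positive, and Holley's lattice condition $\mu_{z_2}(S\cup T)\,\mu_{z_1}(S\cap T)\ge\mu_{z_2}(S)\,\mu_{z_1}(T)$ fails outright: on the path with vertices $u,v,w$ and edges $uv,vw$, take $S=\{u\}$ and $T=\{w\}$; then $N(\{u,w\})=N(\emptyset)=0$ while $N(\{u\})=N(\{w\})=1$, so the left side vanishes and the right side is positive. Full stochastic domination of the uncovered set in $z$ is in any case more than you need: your inclusion--exclusion only consumes the up-sets $\{X : X\supseteq F\}$, and those are exactly what the interlacing route delivers, so you should commit to that route.
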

\begin{remark}\label{remarkf1}
For a finite $G$, let $\mathcal{M}_G$ a uniform random maximum size matching of $G$. Then $\mathcal{M}_G^z$ converges to $\mathcal{M}_G$ in law as $z\to 0$. In particular, $\mathcal{U}_G$ has the same law as $U(\mathcal{M}_G)$. However, for infinite graphs, it is not clear whether $\mathcal{M}_G^z$ converges in law or not. 
\end{remark}

\subsection{Determinantal processes obtained from Boltzmann random matchings on trees}

In this subsection, we restrict our attention to trees with uniformly bounded degrees. Every tree is a bipartite graph. We will consider trees with a fixed proper two coloring. To emphasize this, we will use the term bipartite tree, by which we mean a tuple $(G,S,T)$, where $G$ is a tree, $S$ and $T$ are the two color classes of a proper two coloring of $G$.

Given a matching $M$ of the bipartite tree $(G,S,T)$, let $\Delta(M)$ be the symmetric difference of $U(M)$ and $T$. Or equivalently, let $\Delta(M)$ be the symmetric difference of $V(M)$ and $S$, where $V(M)$ is the set of vertices covered by $M$. For a finite tree, $|\Delta(M)|=|S|$ for any matching.

For $x\in V(G)$, and any real $z$, let $b_G^z(x)\in \ell^2(V(G))$ be the vector \[b_G^z(x)=z\chi_x+\sum_{y\sim x} \chi_y,\]
where $\sum_{y\sim x}$ denotes a summation over the neighbors $y$ of $x$, and $\chi_y$ is the characteristic vector of $y$.  

Let $R_G^z$ be the closed subspace of $\ell^2(V)$ generated by the vectors $\left(b_G^z(s)\right)_{s\in S}$.  Let $P_G^z$ be the orthogonal projection to $R_G^z$.

Note that these definitions depend on the choice of the proper two coloring of $G$. Whenever we need to emphasize this, we will write $R_{G,S,T}^z$ and $P_{G,S,T}^z$ in place of $R_{G}^z$ and $P_{G}^z$.
    
For a finite tree $G$, it is more convenient to describe $R_G^z$ as follows. Let $H_G$ be a matrix where the rows are indexed by $S$ and the columns are indexed by $T$, and each entry is $1$ if the corresponding element of $S\times T$ is an edge of $G$, and $0$ otherwise. Let $B_G^z$ be the (block) matrix 
\begin{equation}\label{BGzdef}
B_G^z=(zI\quad H_G)
\end{equation}
indexed by $S\times (S\cup T)=S\times V(G)$. Then $R_G^z$ is the row space of $B_G^z$.

We will prove the following theorem.
\begin{theorem}\label{postempdet}
Let $(G,S,T)$ be a (possibly infinite) bipartite tree with uniformly bounded degrees. Then $\Delta(\mathcal{M}_G^z)$ is the determinantal process corresponding to $R_G^z$. 
\end{theorem}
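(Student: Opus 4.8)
The plan is to prove the theorem first for finite bipartite trees by a direct evaluation using Lemma~\ref{altdef}, and then to obtain the infinite case by exhausting $G$ by finite subtrees and passing to the limit. For a finite $(G,S,T)$ I would begin with the observation that the map $M\mapsto\Delta(M)$ is injective: by Proposition~\ref{propuniq} a matching is determined by $U(M)$, and since $\Delta(M)=U(M)\,\triangle\,T$ with $T$ fixed, $\Delta(M)$ and $U(M)$ determine each other. Because the rows of $B_G^z=(zI\ H_G)$ are linearly independent (the $zI$ block), we have $\dim R_G^z=|S|=|\Delta(M)|$, so Lemma~\ref{altdef} applies with $B=B_G^z$ and the whole statement reduces to verifying, for every matching $M$,
\[\nu_{R_G^z}(\{\Delta(M)\})=\frac{|\det B_G^z[\Delta(M)]|^2}{\det\big(B_G^z (B_G^z)^T\big)}=\frac{z^{|V(G)|-2|M|}}{P_G(z)}=\mathbb P(\mathcal M_G^z=M),\]
together with a total-mass argument to rule out the remaining subsets.

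For the numerator, write $S_0=S\setminus V(M)$, $S_1=S\cap V(M)$, and $T_1=T\cap V(M)$, so that $\Delta(M)=S_0\cup T_1$ and the columns of $B_G^z[\Delta(M)]$ are the $z\chi_s$ for $s\in S_0$ together with the biadjacency columns indexed by $T_1$. Ordering the rows as $S_0$ followed by $S_1$ exhibits $B_G^z[\Delta(M)]$ as block upper-triangular with diagonal blocks $zI_{S_0}$ and $C$, where $C$ is the $S_1\times T_1$ biadjacency submatrix; hence $\det B_G^z[\Delta(M)]=\pm z^{|S_0|}\det C$. Now $M$ restricts to a perfect matching of the induced forest $G[S_1\cup T_1]$, and a forest has at most one perfect matching (peel off a leaf, whose incident edge is forced, and induct). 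Thus the determinant expansion of $C$ has a single nonzero term, $\det C=\pm1$, and $|\det B_G^z[\Delta(M)]|^2=z^{2(|S|-|M|)}$.

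For the denominator I would use $B_G^z (B_G^z)^T=z^2 I+H_G H_G^T$ and relate its determinant to the matching polynomial. Since $G$ is a forest, the characteristic polynomial of its adjacency matrix equals its matching polynomial, and a Schur-complement computation on the bipartite adjacency matrix $\left(\begin{smallmatrix}0&H_G\\ H_G^T&0\end{smallmatrix}\right)$ gives $\det(wI-H_G H_G^T)=\sum_k(-1)^k m_k\,w^{|S|-k}$, where $m_k$ is the number of $k$-matchings of $G$. Substituting $w=-z^2$ yields $\det(z^2 I+H_G H_G^T)=\sum_k m_k z^{2|S|-2k}=z^{|S|-|T|}P_G(z)$. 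Combining with the numerator gives exactly $z^{|V(G)|-2|M|}/P_G(z)$. Summing over all matchings, these probabilities already account for total mass $1$, so by injectivity of $M\mapsto\Delta(M)$ the measure $\nu_{R_G^z}$ is supported on the sets $\Delta(M)$ and coincides with the law of $\Delta(\mathcal M_G^z)$, completing the finite case.

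For an infinite bipartite tree I would fix an exhaustion $(G_n)$ carrying the induced colourings $(S\cap V_n,\,T\cap V_n)$, so that the finite case makes $\Delta(\mathcal M_{G_n}^z)$ the determinantal process of $R_{G_n}^z$. On the probabilistic side, the Bordenave--Lelarge--Salez theorem gives $\mathcal M_{G_n}^z\to\mathcal M_G^z$ in law; since on any finite window the colour classes are deterministic and the covered/uncovered status of a vertex is a local function of the incident edges (and for large $n$ the window and all its incident edges lie in $G_n$, so the in-$G_n$ and in-$G$ statuses agree), this upgrades to local convergence of $\Delta(\mathcal M_{G_n}^z)$ to $\Delta(\mathcal M_G^z)$. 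On the determinantal side, the window marginals of $\nu_{R_{G_n}^z}$ are determinants of finite submatrices of $P_{G_n}^z=(B_{G_n}^z)^T(z^2 I+H_{G_n}H_{G_n}^T)^{-1}B_{G_n}^z$, and I would show these entries converge to those of $P_G^z$. The main obstacle is precisely this step: one must verify that the positive operators $z^2 I+H_{G_n}H_{G_n}^T$, which are uniformly bounded above by the degree bound and bounded below by $z^2>0$, converge strongly and hence in the strong resolvent sense, so that the resolvent entries converge. Granting this, both sequences of window marginals converge, and equating the two limits identifies the law of $\Delta(\mathcal M_G^z)$ with $\nu_{R_G^z}$.
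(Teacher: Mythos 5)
Your proof is correct, but it diverges from the paper's argument in two places, one cosmetic and one substantive. In the finite case, your numerator computation (block upper-triangular form, the restriction of $M$ being the unique perfect matching of the induced forest, $\det C=\pm 1$) is exactly the paper's Proposition \ref{propBX}; the difference is that the paper never computes $\det\left(B_G^z(B_G^z)^T\right)$ at all: since Proposition \ref{propBX} also shows that $\det B_G^z[X]=0$ whenever $X$ is not of the form $\Delta(M)$, the measure $\nu_{R_G^z}$ is proportional to the Boltzmann weights and equality of the two probability measures follows by normalization. Your explicit identity $\det(z^2I+H_GH_G^T)=z^{|S|-|T|}P_G(z)$, via the coincidence of the characteristic and matching polynomials on forests, is a sound alternative: it buys you the exact normalizing constant and lets you dispose of the remaining subsets by a total-mass argument instead of a vanishing determinant. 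The substantive difference is the infinite case. The paper chooses the exhaustion $G_k=B_{2k}(G,o)$ with root $o\in T$; this parity choice guarantees $b_{G_k}^z(s)=b_G^z(s)$ for every $s\in S\cap V(G_k)$, so the subspaces $R_{G_k}^z$ increase to a dense subspace of $R_G^z$, strong convergence $P_{G_k}^z\to P_G^z$ is immediate, and Proposition \ref{propSOT} finishes the proof. You instead work with an arbitrary exhaustion and must prove entrywise convergence of $P_{G_n}^z=(B_{G_n}^z)^T(z^2I+H_{G_n}H_{G_n}^T)^{-1}B_{G_n}^z$, which you flag as the main obstacle and only sketch. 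That step does close by standard means: on finitely supported vectors $H_{G_n}H_{G_n}^T$ eventually agrees with $H_GH_G^T$ by locality, the norms are uniformly bounded in terms of $D$, and the identity $A_n^{-1}-A^{-1}=A_n^{-1}(A-A_n)A^{-1}$ together with the uniform lower bound $z^2$ converts strong convergence into strong convergence of the inverses. So your route is valid, but it requires the operator-theoretic work that the paper's nested-exhaustion trick, which exploits the bipartite structure through the parity of the balls, was designed to avoid.
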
 

Taking a zero temperature limit, with some additional work we will be able to deduce the following theorem.
\begin{theorem}\label{thmzerotdet}
Let $G$ be a (possibly infinite)  tree with uniformly bounded degrees. Then $\mathcal{U}_G$ is the determinantal process corresponding to the kernel of the adjacency operator of $G$.
\end{theorem}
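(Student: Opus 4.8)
The plan is to obtain Theorem~\ref{thmzerotdet} as a zero-temperature limit of the positive-temperature description in Theorem~\ref{postempdet}, combined with the Bordenave--Lelarge--Salez result that $U(\mathcal{M}_G^z)\to\mathcal{U}_G$ in law as $z\to 0$. Fix a proper two-coloring $(S,T)$ of the tree $G$; then the adjacency operator has the block form $A=\left(\begin{smallmatrix}0 & H_G\\ H_G^T & 0\end{smallmatrix}\right)$ on $\ell^2(V(G))=\ell^2(S)\oplus\ell^2(T)$, and $(x_S,x_T)\in\ker A$ iff $H_G^T x_S=0$ and $H_G x_T=0$, i.e. $\ker A=\ker(H_G^T)\oplus\ker(H_G)$ with $\ker(H_G^T)\subseteq\ell^2(S)$ and $\ker(H_G)\subseteq\ell^2(T)$. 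By definition $\Delta(M)=U(M)\triangle T$, so $\Delta(M)$ and $U(M)$ agree on $S$ and are complementary on $T$; since symmetric difference with the fixed set $T$ is continuous on $2^{V(G)}$, the convergence $U(\mathcal{M}_G^z)\to\mathcal{U}_G$ gives $\Delta(\mathcal{M}_G^z)\to\mathcal{U}_G\triangle T$ in law. By Theorem~\ref{postempdet} each $\Delta(\mathcal{M}_G^z)$ is the determinantal process with projection $P_G^z$ onto $R_G^z$. So it suffices to show $P_G^z$ converges to a projection $P_G^0$, to identify $P_G^0$, and then to undo the complementation on $T$.

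The heart of the argument is the computation of $R_G^0:=\lim_{z\to0}R_G^z$. I would write $R_G^z$ as the range of the bounded-below map $x_S\mapsto (z\,x_S,\ H_G^T x_S)$ from $\ell^2(S)$ into $\ell^2(V(G))$, and split $\ell^2(S)=\ker(H_G^T)\oplus(\ker H_G^T)^\perp$. On $\ker(H_G^T)$ the image is $(z\,x_S,0)$, so the directions $(x_S,0)$ with $x_S\in\ker(H_G^T)$ lie in $R_G^z$ for \emph{every} $z$; on $(\ker H_G^T)^\perp$ the image $(z\,x_S,H_G^T x_S)$ tends to $(0,H_G^T x_S)$, and as $x_S$ ranges over $(\ker H_G^T)^\perp$ the vectors $H_G^T x_S$ fill a dense subset of $(\ker H_G)^\perp$. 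Hence
\[
R_G^0=\ker(H_G^T)\ \oplus\ (\ker H_G)^\perp ,
\]
a subspace that splits as (a subspace of $\ell^2(S)$) $\oplus$ (a subspace of $\ell^2(T)$). Consequently $P_G^0=P_{\ker H_G^T}\oplus P_{(\ker H_G)^\perp}$ is \emph{block diagonal} with respect to $S\sqcup T$. (In the finite case one checks directly that $\dim R_G^0=|S|$, matching $\dim R_G^z$.)

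Because $P_G^0$ is block diagonal, the determinantal process $\mathcal{U}_G\triangle T$ is the independent union of a determinantal process on $S$ with kernel $P_{\ker H_G^T}$ and one on $T$ with kernel $P_{(\ker H_G)^\perp}$: for any finite $F=F_S\sqcup F_T$ the correlation determinant $\det P_G^0[F]$ factors across the two blocks. Passing back from $\mathcal{U}_G\triangle T$ to $\mathcal{U}_G=(\mathcal{U}_G\triangle T)\triangle T$ leaves the $S$-part untouched and replaces the $T$-block by its complementary process, whose kernel is $I_T-P_{(\ker H_G)^\perp}=P_{\ker H_G}$ by the standard particle--hole duality for determinantal processes \cite{lyons}. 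Therefore $\mathcal{U}_G$ is determinantal with kernel $P_{\ker H_G^T}\oplus P_{\ker H_G}$, which is exactly the orthogonal projection onto $\ker(H_G^T)\oplus\ker(H_G)=\ker A$, as claimed.

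The main obstacle is analytic and lives in the infinite case. I must justify that $P_G^z\to P_G^0$ in the strong operator topology, so that the correlation determinants $\det P_G^z[F]$ converge to $\det P_G^0[F]$ and the determinantal measures converge weakly to the one governed by $P_G^0$; and I must keep track of \emph{closed} subspaces, since $\range H_G$ and $\range H_G^T$ need not be closed, so the correct objects are $(\ker H_G^T)^\perp$ and $(\ker H_G)^\perp$ rather than the raw ranges. I also need the complementation and block-diagonal independence statements for determinantal processes with projection kernels over a countably infinite ground set, which are standard \cite{lyons} but must be invoked with care. The finite case is comparatively routine once $R_G^0$ is identified, as there all subspaces are closed and strong-operator convergence reduces to entrywise convergence of the finite projection matrices.
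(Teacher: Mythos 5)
Your route is the same as the paper's: pass to the zero-temperature limit of Theorem~\ref{postempdet}, identify the strong-operator limit of $P_G^z$ as the block-diagonal projection onto $\ker(H_G^T)\oplus(\ker H_G)^{\perp}$ (this is exactly the paper's $P_1\oplus(I-P_2)$ of Lemma~\ref{strongconv2}, and you even capture the subtlety that the limit is not $P_{G,S,T}^0$), and then undo the complementation on the $T$-block using particle--hole duality and independence of the two blocks. The probabilistic half of your argument and the finite case are fine. The genuine gap is the step you explicitly defer: the strong operator convergence $P_G^z\to P_{\ker H_G^T}\oplus P_{(\ker H_G)^{\perp}}$. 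For infinite trees this is not a technicality to be ``invoked with care''; it is the heart of the proof, and the heuristic you offer in its place is unsound. Your computation establishes only a lower bound on the limit: every vector of $\ker(H_G^T)\oplus(\ker H_G)^{\perp}$ is a limit of vectors lying in $R_G^z$. It gives nothing in the other direction, i.e.\ no proof that $\|P_G^z v\|\to 0$ for $v$ orthogonal to this subspace, and that is the half that can fail.

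Concretely, ``the generators converge, hence the projections converge to the projection onto the closed span of the limits'' is false in infinite dimensions. In $\ell^2(\mathbb{N})$, let $H_z=\overline{\mathrm{span}}\{e_i+z\,i\,e_1:\ i\ge 2\}$ for $z\neq 0$. Each generator tends to $e_i$ as $z\to 0$, and the closed span of the limits is $e_1^{\perp}$; but the combination with coefficients $a_i=i\big/\bigl(z\sum_{j=2}^{N}j^2\bigr)$, $2\le i\le N$, converges to $e_1$ as $N\to\infty$, so $e_1\in H_z$ and hence $H_z=\ell^2(\mathbb{N})$ for every $z\neq 0$, giving $P_{H_z}=I\not\to P_{e_1^{\perp}}$. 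Thus the limit of the projections can be strictly larger than the span of the limits of the generators, and your sketch cannot rule this out for $R_G^z$. Supplying this missing direction is precisely what the paper's two lemmas do: Lemma~\ref{strongconv} proves $P^z_{G,S,T}v\to P^0_{G,S,T}v$ for $v\in\ell^2(T)$ by a Pythagoras/approximation estimate, using that the coordinate projection $P_T$ maps $R^z_{G,S,T}$ into $R^0_{G,S,T}$; and Lemma~\ref{ortho}, the duality $\bigl(R^z_{G,S,T}\bigr)^{\perp}=R^{-z}_{G,T,S}$ (whose proof needs a genuinely separate idea: a complex eigenvector of the self-adjoint adjacency operator), converts that easy containment for the transposed system into the missing annihilation statement for the original one, via $P^z_{G,S,T}v=v-P^{-z}_{G,T,S}v\to P_1v$ for $v\in\ell^2(S)$. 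Without Lemma~\ref{ortho} or a substitute for it, your proof is complete only for finite trees.
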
 

By Remark \ref{remarkf1}, we see that this theorem includes Theorem \ref{thmintdet} as a special case.

\subsection{A more general version of Theorem \ref{thmlocal}}

We will prove the following  more general version of Theorem \ref{thmlocal}.
\begin{theorem}\label{thmlocalgen}
For any $D,r<\infty$ and  $\varepsilon>0$, we have an $R$ with the following property. Consider any finite graph $G$ with maximum degree at most $D$. Then there is set of exceptional vertices $V_{ex}$ such that $|V_{ex}|\le \varepsilon|V(G)|$. For any vertex $o\not\in V_{ex}$, if we  consider the determintal processes corresponding to $\bar{P}_G$ and $\bar{P}_{G,o,R}$, and take their marginal in the window $B_r(G,o)$, then these marginals have total variation distance at most $\varepsilon$. 
\end{theorem}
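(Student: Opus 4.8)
Fix $o$ and the window $W=B_r(G,o)$, which has at most $N:=1+D+\cdots+D^{r}$ vertices. The marginal on $W$ of the determinantal process attached to a projection $P$ is a measure on $2^{W}$ each of whose atoms is, by the defining determinants of Lemma~\ref{altdef} together with inclusion--exclusion, a fixed polynomial in the entries of the principal block $P|_{W}=(p_{ij})_{i,j\in W}$. Since $|W|\le N$ and projection entries lie in $[-1,1]$, this polynomial is Lipschitz with a constant $C=C(D,r)$, so the total variation distance between the two window marginals is at most $C\,\|\bar P_G|_{W}-\bar P_{G,o,R}|_{W}\|$; it suffices to make this block small for all but $\varepsilon|V(G)|$ centres $o$. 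Writing $\Pi_G:=I-\bar P_G$ for the projection onto $\range A_G=\overline{\mathrm{span}}\{b_G^{0}(v):v\in V(G)\}$, we have $\bar P_{G,o,R}-\bar P_G=\Pi_G-\Pi_{G,o,R}$, and since $\{b_G^{0}(v):v\in B_R(G,o)\}$ is a subfamily of the generators of $\range A_G$, the ranges are nested. Hence $\Pi_G-\Pi_{G,o,R}$ is itself an orthogonal projection, in particular positive semidefinite, so that $\|(\Pi_G-\Pi_{G,o,R})|_{W}\|\le\Tr\big((\Pi_G-\Pi_{G,o,R})|_{W}\big)=\sum_{u\in W}(\Pi_G-\Pi_{G,o,R})_{uu}$, and the whole problem reduces to bounding the diagonal entries $(\Pi_G-\Pi_{G,o,R})_{uu}$ for $u\in W$.

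\textbf{Step 2: a regularisation that is simultaneously local and close to $\bar P_G$.} The plan is to insert the spectral regularisation $f_s(A_G):=s^{2}(A_G^{2}+s^{2})^{-1}$, a positive contraction with $f_s(A_G)\to\bar P_G$ as $s\downarrow0$. The operator $M_s:=f_s(A_G)-\bar P_G$ is positive semidefinite with $\|M_s\|\le1$, so $M_s^{2}\preceq M_s$ and $\|M_s\chi_u\|^{2}\le (M_s)_{uu}$. Putting $\zeta_s:=A_G(A_G^{2}+s^{2})^{-1}\chi_u$ gives $A_G\zeta_s=(I-f_s(A_G))\chi_u$. Because $A_G^2$ is two-local and $A_G^2+s^2\succeq s^2 I$ has gap $s^2$, a Combes--Thomas estimate shows $\zeta_s$ decays exponentially away from $u$ at a rate depending only on $s$ and $D$, uniformly in $G$; truncating $\zeta_s$ to $B_R(G,o)\supseteq B_{R-r}(G,u)$ and using $\|A_G\|\le D$ yields, for $R\ge r+R_0(s,\eta,D)$, a vector in $A_G\,\ell^{2}(B_R(G,o))=\range\Pi_{G,o,R}$ within $\eta$ of $(I-f_s(A_G))\chi_u$. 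Combining this with the identity $\Pi_G-\Pi_{G,o,R}=(I-\Pi_{G,o,R})\Pi_G$ and the decomposition $\Pi_G\chi_u=(I-f_s(A_G))\chi_u+M_s\chi_u$, I get
\begin{equation*}
(\Pi_G-\Pi_{G,o,R})_{uu}=\|(I-\Pi_{G,o,R})\Pi_G\chi_u\|^{2}\le 2\eta^{2}+2\,(M_s)_{uu},
\end{equation*}
so that, with $R$ and $\eta$ already fixed uniformly, the matter is reduced to the diagonal regularisation defect $(M_s)_{uu}=(f_s(A_G)-\bar P_G)_{uu}$.

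\textbf{Step 3: the exceptional set from a uniform trace bound.} The total defect is a spectral quantity,
\begin{equation*}
\tfrac{1}{|V(G)|}\Tr\big(f_s(A_G)-\bar P_G\big)=\int_{\lambda\neq0}\frac{s^{2}}{\lambda^{2}+s^{2}}\,d\mu_G(\lambda),
\end{equation*}
with $\mu_G$ the empirical spectral measure of $A_G$, and I would show that this tends to $0$ as $s\downarrow0$ \emph{uniformly} over all graphs of maximum degree $\le D$. For each fixed $s$ the map $h_s(G,o):=\langle\chi_o,f_s(A_G)\chi_o\rangle$ is continuous on the compact space $\mathcal{G}_D$ (again by exponential decay of the resolvent) and increases in $s$, decreasing as $s\downarrow0$ to $h_0(G,o):=\langle\chi_o,\bar P_G\chi_o\rangle$. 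Were uniformity to fail, a compactness argument would produce trees $G_n$ and $s_n\downarrow0$ with $U(G_n)\Rightarrow\nu$ and $\int(h_{s_n}-h_0)\,dU(G_n)\ge\varepsilon_0$, forcing $\int(h_s-h_0)\,d\nu\ge\varepsilon_0$ for every $s$ and contradicting monotone convergence for the single measure $\nu$; the one nontrivial input is that $\int h_0\,dU(G_n)=\tfrac1{|V(G_n)|}\dim\ker A_{G_n}$ passes to the limit, i.e. the Benjamini--Schramm continuity of the normalised rank of the adjacency matrix \cite{abert2013benjamini}. Granting the uniform bound, I choose $s$ with $\Tr(f_s(A_G)-\bar P_G)\le\varepsilon'|V(G)|$; by Markov the set of $u$ with $(M_s)_{uu}>\delta$ has size $\le\varepsilon'|V(G)|/\delta$, and $o$ is exceptional only if some $u\in B_r(G,o)$ (equivalently $o\in B_r(G,u)$) is such a vertex, giving $|V_{ex}|\le N\varepsilon'|V(G)|/\delta$. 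Choosing $\delta$ so small that $N(2\eta^{2}+2\delta)$ lands below the total-variation threshold, and then $\varepsilon'$ so small that $N\varepsilon'/\delta\le\varepsilon$, completes the argument for every $o\notin V_{ex}$.

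\textbf{Where the difficulty lies.} Steps~1 and~2 are soft once the uniform resolvent (Combes--Thomas) locality is in hand; the genuine obstacle is the uniform vanishing of the trace in Step~3, namely that the spectral mass of $A_G$ immediately outside $0$ disappears at a rate independent of $G$. This is exactly the feature that fails \emph{pointwise} on $\mathcal{G}_D$, since the target $h_0$ is discontinuous there, as the alternating--tree example around \eqref{peldeq1}--\eqref{peldeq2} shows, and it is precisely this discontinuity that forces an exceptional set rather than a bound at every vertex. I expect the honest work to consist in packaging the monotone--convergence/compactness argument together with the rank--continuity input of \cite{abert2013benjamini} into the averaged continuity statement of \cite{meszaros2019limiting}; on trees one may alternatively obtain the same uniform defect bound through the positive--temperature determinantal description of Theorems~\ref{postempdet} and~\ref{thmzerotdet}, with the temperature playing the role of the regularisation parameter $s$.
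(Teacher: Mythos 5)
Your proposal is correct, and its skeleton matches the paper's: both reduce the total-variation distance of the window marginals to entrywise closeness of the projection blocks via determinant continuity, both exploit that $\Pi_G-\Pi_{G,o,R}$ is itself an orthogonal projection because the ranges are nested, both produce the exceptional set by an averaged bound plus Markov's inequality and a union over windows, and both prove the crucial averaged bound by the same non-elementary inputs, namely compactness of $\mathcal{G}_D$ and the Benjamini--Schramm continuity of the normalized rank \cite{abert2013benjamini} combined with a monotone-convergence argument (compare your Step 3 with the paper's proof of Lemma \ref{rankapprox}). Where you genuinely diverge is the middle layer. The paper never leaves the family of local projections: its averaged quantity is $\rang(G)-\rang_R(G)=\sum_{u}\left(\langle\Pi_G\chi_u,\chi_u\rangle-\langle\Pi_{G,u,R}\chi_u,\chi_u\rangle\right)$ (Lemma \ref{rankapprox}), this is localized per vertex by Markov (Lemma \ref{rankapprox2}), and the window-versus-center issue is settled purely by ball nesting, $\langle\Pi_{G,o,R}\chi_u,\chi_u\rangle\ge\langle\Pi_{G,u,R_0}\chi_u,\chi_u\rangle$ for $u\in B_r(G,o)$ and $R\ge R_0+r$, followed by Cauchy--Schwarz (Lemma \ref{lemma12}); no resolvent analysis is needed because $\Pi_{G,o,R}$ is local by construction and monotone in $R$. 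You instead interpose the spectral regularization $f_s(A_G)=s^2(A_G^2+s^2)^{-1}$, prove the uniform averaged bound for $\Tr\left(f_s(A_G)-\bar{P}_G\right)$, and then need a Combes--Thomas estimate to push $(I-f_s(A_G))\chi_u$ into the range of $\Pi_{G,o,R}$. This costs you an extra quantitative tool, but it buys a clean separation between a deterministic, every-vertex locality estimate and the averaged spectral statement where all the Benjamini--Schramm content sits; it also makes explicit that the only global input is the uniform vanishing of the spectral mass of $A_G$ near, but off, zero, which connects naturally to the temperature regularization the paper uses elsewhere, as you observe. Neither route yields an effective $R$, since both rest on compactness. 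Two minor points: in your compactness argument the witnesses $G_n$ should be arbitrary graphs of maximum degree at most $D$, not trees (immaterial, since nothing in your argument uses trees); and your trace bound $\|(\Pi_G-\Pi_{G,o,R})|_{W}\|\le\sum_{u\in W}(\Pi_G-\Pi_{G,o,R})_{uu}$ plays exactly the role of the paper's Cauchy--Schwarz step, so the two reductions in Step 1 are essentially equivalent.
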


For a tree $G$, we know that $U(\mathcal{M}_G)$ is the determintal processes corresponding to $\bar{P}_G$. Therefore, Theorem \ref{thmlocal} follows easily. We emphasize that Theorem \ref{thmlocalgen} is true for any graph~$G$, but if $G$ is not a tree, then the determinantal processes corresponding to $\bar{P}_G$ has no clear combinatorial relevance.

For a graph $G$, let $\Pi_G$ be the orthogonal projection to subspace of $\mathbb{R}^{V(G)}$ generated by the vectors $(b^0_G(v))_{v\in V(G)}$, that is, $\Pi_G$ is the orthogonal projection to rowspace of the adjacency matrix of $G$. We also have $\Pi_G=I-\bar{P}_{G}$.   Theorem \ref{thmlocalgen} is an easy consequence of the following lemma.

\begin{lemma}\label{lemma12}
For any $D,r<\infty$ and $\varepsilon_2>0$, there is an $R_2$ such that for any $R\ge R_2$ and for any finite graph $G$ with maximum degree at most $D$, the following holds. There is exceptional subset of the vertices $V_2\subset V(G)$ such that $|V_2|\le\varepsilon_2 |V(G)|$, and for any $o\in V(G)\backslash V_2$ and $u,v\in B_r(G,o)$, we have
\[\left|\langle\Pi_G\chi_u,\chi_v\rangle-\langle\Pi_{G,o,R}\chi_u,\chi_v\rangle\right|<\varepsilon_2,\] 
or equivalently,
\[\left|\langle\bar{P}_G\chi_u,\chi_v\rangle-\langle\bar{P}_{G,o,R}\chi_u,\chi_v\rangle\right|<\varepsilon_2.\]
\end{lemma}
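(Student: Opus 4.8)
The plan is to compare the two orthogonal projections $\Pi_G$ and $\Pi_{G,o,R}$ by comparing the subspaces they project onto, and to control the difference via a resolvent/Neumann-series expansion that localizes everything to a neighborhood of $o$. Recall that $\Pi_G$ is the projection onto the rowspace of the adjacency matrix $A=A_G$, equivalently onto $\range A$, and that $\Pi_{G,o,R}$ is the projection onto the span of the columns $b_G^0(v)=A\chi_v$ for $v\in V(B_R(G,o))$. The key observation I would exploit is that the matrix entry $\langle \Pi_G\chi_u,\chi_v\rangle$ can be written as $\langle \bar P_G\chi_u,\chi_v\rangle = \delta_{uv}-\langle \Pi_G\chi_u,\chi_v\rangle$, and that $\bar P_G$, the projection onto $\ker A$, is amenable to a spectral/contour-integral representation. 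Concretely I would use the identity $\bar P_G=\lim_{t\to 0^+}\frac{t}{A^2+t}$ (spectral calculus applied to $A^2$), or equivalently a resolvent expression $\frac{1}{2\pi i}\oint (zI-A^2)^{-1}\,dz$ around $0$, so that matrix entries become limits of entries of resolvents of $A^2$.

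First I would establish a \emph{locality} estimate: for the resolvent $(A^2+t)^{-1}$ with $t>0$ fixed and for $G$ of maximum degree at most $D$, the entry $\langle (A^2+t)^{-1}\chi_u,\chi_v\rangle$ decays geometrically in the graph distance $d_G(u,v)$, with a rate depending only on $D$ and $t$. This follows from the Neumann-series / Combes–Thomas type bound, since $A^2$ has bounded row sums (at most $D^2$) and off-diagonal reach $2$ per application. The same decay then transfers to $\langle \bar P_G\chi_u,\chi_v\rangle$ after the $t\to 0$ limit, \emph{provided} one can pass the limit through; handling this passage uniformly is the delicate point (see below). Granting the decay, the entries $\langle \Pi_G\chi_u,\chi_v\rangle$ for $u,v\in B_r(G,o)$ are, up to an exponentially small error in $R$, determined by the ball $B_{R'}(G,o)$ for $R'$ somewhat larger than $R$.

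Next I would show that $\Pi_{G,o,R}$ approximates $\Pi_G$ on this window. The inclusion $R_{G,o,R}\subseteq \range A$ is automatic, so $\Pi_{G,o,R}\le \Pi_G$ as projections; the content is that $\Pi_{G,o,R}\chi_u$ is close to $\Pi_G\chi_u$ for $u$ near $o$. I would argue that $\Pi_G\chi_u=A(\text{something})$, and that the relevant preimage can be taken supported near $u$ up to exponentially small tails, again by the resolvent locality applied to $A$ itself on $\range A$ (via $A^{+}=A(A^2)^{-1}$ understood through the same spectral calculus). The exceptional set $V_2$ enters here: the decay constants are uniform, but the \emph{effective radius} at which the tail drops below $\varepsilon_2$ is controlled only on average, because the local spectral density of $A^2$ near $0$ can vary from vertex to vertex. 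I would introduce $V_2$ as the set of vertices where the spectral mass of $A^2$ in a window $[0,\eta]$ (measured by $\langle \mathbbm{1}_{[0,\eta]}(A^2)\chi_o,\chi_o\rangle$) is anomalously large; a counting/trace bound $\sum_{o}\langle \mathbbm{1}_{[0,\eta]}(A^2)\chi_o,\chi_o\rangle=\operatorname{tr}\mathbbm{1}_{[0,\eta]}(A^2)\le |V(G)|$ forces $|V_2|\le\varepsilon_2|V(G)|$ for the right choice of $\eta=\eta(\varepsilon_2,D)$.

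I expect the main obstacle to be controlling the $t\to 0$ (equivalently $\eta\to 0$) limit \emph{uniformly} across all graphs $G$ and all non-exceptional roots, i.e.\ decoupling the two scales: the regularization parameter governing how well $\frac{t}{A^2+t}$ approximates $\bar P_G$, and the radius $R$ governing how well the finite ball captures the resolvent. The near-kernel eigenvalues of $A^2$ (arbitrarily small positive eigenvalues coming from near-singular adjacency matrices) degrade the Combes–Thomas rate as $t\to0$, so the two limits cannot be taken independently. The resolution is precisely the averaging argument producing $V_2$: away from the exceptional set the local spectral measure has no atom-like concentration near $0$, which restores a uniform gap-free decay rate and lets me choose $\eta$ first and then $R_2=R_2(\varepsilon_2,D)$, completing the estimate $\bigl|\langle\bar P_G\chi_u,\chi_v\rangle-\langle\bar P_{G,o,R}\chi_u,\chi_v\rangle\bigr|<\varepsilon_2$.
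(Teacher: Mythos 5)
Your plan is genuinely different from the paper's: the paper never touches resolvents or Combes--Thomas bounds. Instead it shows that the global rank $\rang(G)=\sum_{o}\langle\Pi_G\chi_o,\chi_o\rangle$ is approximated within $\varepsilon|V(G)|$ by the sum of the local quantities $\langle\Pi_{G,o,R}\chi_o,\chi_o\rangle$ (a compactness/contradiction argument over Benjamini--Schramm limits, resting on the Ab\'ert--Thom--Vir\'ag theorem on continuity of the normalized rank \cite{abert2013benjamini}), extracts the exceptional set by Markov's inequality applied to the nonnegative diagonal deficits, and converts the diagonal estimate into the off-diagonal one by observing that $\Pi_G-\Pi_{G,o,R}$ is itself an orthogonal projection (the images are nested) and applying Cauchy--Schwarz. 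Your architecture (regularize $\bar P_G$ by $t(A^2+t)^{-1}$, localize the resolvent, average out the near-kernel spectral mass) could in principle be made to work, but it has a genuine gap, and it sits exactly at the step you call ``the resolution.''

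The gap is the averaging step that produces $V_2$. The bound you invoke, $\sum_{o}\langle \mathbbm{1}_{[0,\eta]}(A^2)\chi_o,\chi_o\rangle=\operatorname{tr}\mathbbm{1}_{[0,\eta]}(A^2)\le |V(G)|$, is trivially true and gives nothing: each local mass is at most $1$, so Markov's inequality with any admissible threshold $\tau\le 1$ only yields $|V_2|\le|V(G)|/\tau\ge|V(G)|$, which is vacuous. Moreover the window must exclude $0$: the kernel mass $\langle\bar P_G\chi_o,\chi_o\rangle$ is precisely what you want to keep, and for trees $\dim\ker A$ is typically a positive proportion of $|V(G)|$ (it equals the number of vertices uncovered by a maximum matching), so with the closed window $[0,\eta]$ your $V_2$ could be most of the graph. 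What your scheme actually requires is: for every $\delta>0$ there is $\eta>0$ such that for \emph{every} finite graph $G$ with maximum degree at most $D$, the number of eigenvalues of $A_G^2$ in the punctured window $(0,\eta]$ is at most $\delta|V(G)|$. This statement is true, but it is not elementary: if it failed for some $\delta$, one could take graphs $G_n$ carrying at least $\delta|V(G_n)|$ eigenvalues in $(0,1/n]$, pass to a Benjamini--Schramm convergent subsequence, and combine weak convergence of the expected spectral measures with the Ab\'ert--Thom--Vir\'ag convergence of the mass of the atom at $0$ to reach a contradiction (the atom's mass would have to jump by $\delta$). In other words, the uniform ``no concentration of nonzero spectrum near $0$'' statement is essentially equivalent to the Benjamini--Schramm continuity of the normalized rank plus compactness --- exactly the nontrivial machinery the paper's proof is built on. So your proof can be completed only by importing that same input; the trace bound you propose in its place does not do the job, and without it the two scales $\eta$ (or $t$) and $R$ in your argument cannot be decoupled.
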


\subsection{The space of rooted graph-positive-contractions}
First, we recall a few definitions from \cite{meszaros2019limiting}. 

As usual, fix a degree bound $D$. A \emph{rooted graph-positive-contraction} (\rwpc) is a triple $(G,o,T)$, where $(G,o)$ is a rooted graph and $T$ is a positive contraction on $\ell^2(V(G))$, that is, $T$ is a positive  semidefinite self-adjoint operator on $\ell^2(V(G))$ with norm at most $1$.  
Given two \rwpcs\  $(G_1,o_1,T_1)$ and $(G_2,o_2,T_2)$ their distance  is 
defined as the infimum over all $\varepsilon>0$ such that for $r=\lfloor \varepsilon^{-1}\rfloor$ there is a root preserving graph isomorphism $\psi$ from $B_r(G_1,o_1)$ to $B_r(G_2,o_2)$ with the property that 
\begin{equation*}%\label{ineq111}
|\langle T_1\chi_v, \chi_w\rangle-\langle T_2\chi_{\psi (v)}, \chi_{\psi (w)}\rangle|<\varepsilon
\end{equation*} 
for every $v,w\in V(B_r(G_1,o_1))$. Recall that $\chi_v$ is the characteristic vector of $v$. 
Two \rwpcs\  $(G_1,o_1,T_1)$ and $(G_2,o_2,T_2)$ are called isomorphic if their distance is $0$, or equivalently if there is a root preserving graph isomorphism $\psi$ from $(G_1,o_1)$ to $(G_2,o_2)$ such that \[\langle T_1\chi_v,\chi_w\rangle=\langle T_2\chi_{\psi (v)}, \chi_{\psi (w)}\rangle\] for every $v,w\in V(G_1)$. Let $\mathcal{\rwpc}$ be the set of isomorphism classes of \rwpcs. One can prove that $\mathcal{\rwpc}$ is a compact metric space with the above defined distance. Let $\mathcal{P}(\mathcal{\rwpc})$ be the set of probability measures on $\mathcal{\rwpc}$ endowed with the weak* topology, this is again a compact space. Often it will be more convenient to consider  an element $\mathcal{P}(\mathcal{\rwpc})$ as a random \rwpc.

A \emph{finite graph-positive-contraction} is a pair $(G,T)$, where $G$ is finite graph with degrees at most $D$, and $T$ is a positive contraction on $\ell^2(V(G))$. It can be turned into a random \rwpc\ %denoted by
 \[U(G,T)=(G_o,o,T_o)\] by choosing $o$ as a uniform random vertex of $G$.

A sequence of finite graph-positive-contractions $(G_1,T_1),(G_2,T_2),\dots$ is Benjamini-Schramm converging to a random \rwpc\ $(G,o,T)$, if the sequence $U(G_n,T_n)$ converges in law to $(G,o,T)$.

The main result of the paper \cite{meszaros2019limiting} is the following theorem.
\begin{theorem}(\cite[Theorem 2.4]{meszaros2019limiting})\label{entdetcont}
Let $(G_n,P_n)$ be a sequence of finite graph-positive-contractions Benjamini-Schramm converging to a random \rwpc\ $(G,o,P)$. Assume that $P_1,P_2,\dots$ are orthogonal projections, and $P$ is an orthogonal projection with probability $1$. Let $X_n$ be the determinantal process corresponding to $P_n$. Then  
\[\lim_{n\to\infty} \frac{H(X_n)}{|V(G_n)|}\]
exists. 
\end{theorem}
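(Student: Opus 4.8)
The plan is to realize the normalized entropy as the expectation, over a uniform random root, of a local ``entropy density,'' and then to feed that density into the weak* machinery of \rwpcs. Fix the sequence $(G_n,P_n)$ with each $P_n$ an orthogonal projection, write $N_n=|V(G_n)|$, let $X_n$ be the determinantal process of $P_n$ with occupation variables $\eta_v=\mathbbm{1}[v\in X_n]$, and set $h(x)=-x\log x-(1-x)\log(1-x)$. I would decorate each vertex with an independent $\mathrm{Uniform}[0,1]$ label, inducing an almost surely strict random total order $\prec$. For every fixed realization of the labels the chain rule gives $H(X_n)=\sum_v H(\eta_v\mid \eta_{\prec v})$; averaging over the labels and setting
\[
\psi(G,o,P)=\mathbb{E}_{\mathrm{lab}}\bigl[H(\eta_o\mid \eta_w:w\prec o)\bigr]
\]
yields $\tfrac{H(X_n)}{N_n}=\mathbb{E}_o[\psi(G_n,o,P_n)]$ with $o$ a uniform root. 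This is the shape required for Benjamini--Schramm continuity; it remains to show the integrand is, up to an exceptional set, a bounded continuous functional on $\mathcal{\rwpc}$.

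Since $\psi$ conditions on the whole past it is not local, so I would first introduce its $R$-truncation
\[
\psi_R(G,o,P)=\mathbb{E}_{\mathrm{lab}}\bigl[H(\eta_o\mid \eta_w:w\prec o,\ w\in B_R(G,o))\bigr].
\]
After integrating out the labels, $\psi_R$ depends only on $B_R(G,o)$ and on the entries $\langle P\chi_u,\chi_w\rangle$ for $u,w\in B_R(G,o)$, because the conditional probabilities of $\eta_o$ given a configuration on a subset of the ball are ratios of minors of the restriction of $P$ to that ball. Hence $\psi_R$ is a bounded ($0\le\psi_R\le\log 2$) continuous functional on $\mathcal{\rwpc}$, and by monotonicity of conditional entropy $\psi_R\ge\psi$ with $\psi_R\downarrow\psi$ as $R\to\infty$. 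By Benjamini--Schramm convergence of $(G_n,P_n)$ to $(G,o,P)$ with limit law $\mu$ on $\mathcal{\rwpc}$, for each fixed $R$ we get $\mathbb{E}_o[\psi_R(G_n,o,P_n)]\to\int\psi_R\,d\mu$. Because $\psi\le\psi_R$ this already gives $\limsup_n \tfrac{H(X_n)}{N_n}\le \inf_R\int\psi_R\,d\mu=\int\psi\,d\mu$ by dominated convergence.

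For the matching lower bound I would control the truncation error uniformly in $n$. Writing it as a conditional mutual information,
\[
\psi_R-\psi=\mathbb{E}_{\mathrm{lab}}\bigl[I(\eta_o;\ \eta_w:w\prec o,\ w\notin B_R(G,o)\ \mid\ \eta_w:w\prec o,\ w\in B_R(G,o))\bigr]\ge 0,
\]
which, summed over $v$, is the Kullback--Leibler divergence between $\nu_{P_n}$ and its sequential $R$-local reconstruction. Here the orthogonal--projection hypothesis enters decisively through $\sum_{u\neq o}|\langle P\chi_o,\chi_u\rangle|^2=\langle P\chi_o,\chi_o\rangle\bigl(1-\langle P\chi_o,\chi_o\rangle\bigr)$, a consequence of $P^2=P=P^\ast$. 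This bounds the total correlation mass at $o$ by $1/4$ and, crucially, exhibits the far mass
\[
g_R(G,o,P)=\langle P\chi_o,\chi_o\rangle\bigl(1-\langle P\chi_o,\chi_o\rangle\bigr)-\sum_{0<d(o,u)\le R}|\langle P\chi_o,\chi_u\rangle|^2
\]
as a bounded continuous local functional with $\int g_R\,d\mu\downarrow 0$. The key estimate is then $\psi_R-\psi\le C\,g_R$ for a constant $C=C(D)$: conditioning on the near past keeps the process determinantal with a projection kernel $Q$, the covariances $-|Q_{ou}|^2$ are the relevant second--order data, and one bounds the conditional mutual information of $\eta_o$ with the far field by a sum of squared kernel entries. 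Granting this, $\mathbb{E}_o[\psi]\ge\mathbb{E}_o[\psi_R]-C\,\mathbb{E}_o[g_R]$, so $\liminf_n\tfrac{H(X_n)}{N_n}\ge\int\psi_R\,d\mu-C\int g_R\,d\mu$; letting $R\to\infty$ gives $\liminf_n\ge\int\psi\,d\mu$, and with the $\limsup$ bound the limit exists and equals $\int\psi\,d\mu$.

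The \textbf{main obstacle} is precisely the inequality $\psi_R-\psi\le C\,g_R$. The passage from a covariance estimate to a mutual--information estimate degenerates at nearly deterministic vertices, where the marginal $\langle P\chi_o,\chi_o\rangle$ is close to $0$ or $1$; and controlling the entries $Q_{ou}$ of the conditioned projection in terms of the original $\langle P\chi_o,\chi_u\rangle$, uniformly over the whole sequence, is delicate. I would absorb the nearly deterministic vertices into an exceptional set of vanishing density (their entropy contribution is automatically negligible), and control the conditioned kernel by using that conditioning a projection determinantal process again yields a projection, together with the uniform $\ell^2$ budget furnished by $P^2=P$.
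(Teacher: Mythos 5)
First, a point of comparison: this paper does not prove Theorem \ref{entdetcont} at all --- it is imported verbatim from \cite{meszaros2019limiting} --- so your attempt must be measured against that paper's proof. Your architecture does match it: the chain rule along a uniform random vertex ordering, the identity $H(X_n)/|V(G_n)|=\mathbb{E}_o[\psi(G_n,o,P_n)]$, the $R$-truncation $\psi_R$ as a bounded local functional, and the limit formula $\int\psi\,d\mu$ are exactly the scheme behind the cited result (your $\psi$ is the paper's $\bar h$ of Remark \ref{remarklim}). The easy half of your argument (monotonicity $\psi\le\psi_R$, continuity of $\psi_R$, hence the $\limsup$ bound) is sound, modulo one caveat you gloss over: $\psi_R$ is built from conditional probabilities, i.e.\ ratios of minors of the restricted kernel, and such ratios are \emph{not} continuous in the kernel entries where the conditioning event has vanishing probability; making $\psi_R$ genuinely continuous on the space of rooted graph-positive-contractions (or continuous off a $\mu$-null set, with a quantitative substitute) requires an argument you do not supply.

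The genuine gap is your key estimate $\psi_R-\psi\le C(D)\,g_R$, which you yourself flag but never prove, and which in this linear form is not believable: the discrepancy $\psi_R-\psi$ is an expected Bregman divergence of the binary entropy $h$ along the martingale $p_{\mathcal F}=\mathbb{P}(\eta_o=1\mid\mathcal F)$, i.e.\ an expected binary Kullback--Leibler divergence, and binary KL is \emph{not} dominated by a constant times squared covariance when the conditional marginal degenerates --- one can have $\mathbb{E}[D(p_{\mathcal G}\Vert p_{\mathcal F})]\asymp p\log(\delta/p)$ while the variance of the increment is $\asymp p\delta$, so no constant $C$ works pointwise. Your proposed patch (an exceptional set of ``nearly deterministic'' vertices) is also unsubstantiated: near-determinism here refers to \emph{conditional} marginals, which are random and $n$-dependent, and you give no uniform-in-$n$ density bound for the bad set. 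The repair --- and essentially what the cited proof does --- is to abandon the linear bound in favor of a modulus-of-continuity bound: using $|h(x)-h(y)|\le h(|x-y|)$, concavity, and Cauchy--Schwarz,
\begin{equation*}
\psi_R-\psi\;\le\;\mathbb{E}\,h\bigl(|p_{\mathcal F_R}-p_{\mathcal F_\infty}|\bigr)\;\le\;h\Bigl(\sqrt{\mathbb{E}\,(p_{\mathcal F_R}-p_{\mathcal F_\infty})^2}\Bigr),
\end{equation*}
and then the determinantal-specific input is an $L^2$ martingale estimate tying $\mathbb{E}(p_{\mathcal F_R}-p_{\mathcal F_\infty})^2$ to the far kernel mass $\sum_{u\notin B_R}|\langle P\chi_o,\chi_u\rangle|^2$, via the fact (which you invoke only heuristically) that conditioning a projection determinantal process on finitely many coordinates again yields a projection kernel whose diagonal tracks the revealed information. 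Since $h(\sqrt{\cdot})$ is continuous at $0$, this weaker, nonlinear bound suffices for your $\liminf$ argument after averaging over the root; but proving that $L^2$ estimate is the real content of the theorem, and your proposal stops exactly where it begins.
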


To apply the theorem above, we need to prove the following theorem already mentioned in the Introduction. This theorem can be view as a more abstract form Lemma \ref{lemma12}. %For a graph $G$, let $\bar{P}_G$ be the orthogonal projection to the kernel of the adjacency operator of $G$.

\begin{theorem}\label{projconv}
Let $G_1,G_2,...$ be a sequence of finite graphs Benjamini-Schramm converging to a random rooted graph $(G,o)$.  Then the sequence $(G_n,\bar{P}_{G_n})$ Benjamini-Schramm converges to $(G,o,\bar{P}_G)$. 
\end{theorem}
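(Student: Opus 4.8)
The plan is to reduce the convergence of the globally defined projections $\bar{P}_{G_n}$ to a statement about purely local data, using Lemma \ref{lemma12} to replace $\bar{P}_{G_n}$ by the truncated projections $\bar{P}_{G_n,o,R}$, which depend only on a bounded neighbourhood of the root. By the definition of the topology on rooted graph-positive-contractions it suffices, for every fixed radius $r$, to prove that the law of the window data $W_r(G_n,o_n):=\big(B_r(G_n,o_n),(\langle \bar{P}_{G_n}\chi_u,\chi_v\rangle)_{u,v\in V(B_r(G_n,o_n))}\big)$ converges weakly to the law of $W_r(G,o)$, where $o_n$ is a uniform random vertex of $G_n$; testing against bounded Lipschitz window functions is enough, and weak convergence of all window laws is equivalent to the asserted convergence in the space of \rwpcs. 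Since the adjacency operator $A$ is block diagonal over connected components, the entries $\langle \bar{P}_{G_n}\chi_u,\chi_v\rangle$ with $u,v$ in the component of $o_n$ coincide with the corresponding entries computed in that component alone, so passing to rooted components causes no difficulty.

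First I would record the locality of the truncated window data. For a fixed $R$, write $W_r^{(R)}(G,o)$ for the window data built from $\bar{P}_{G,o,R}$ in place of $\bar{P}_G$. Because $\bar{P}_{G,o,R}=I-\Pi_{G,o,R}$ and $\Pi_{G,o,R}$ is the orthogonal projection onto the span of the finitely many vectors $b_G^0(v)=A\chi_v$ with $v\in V(B_R(G,o))$, all of which are supported in $B_{R+1}(G,o)$, the entries $\langle \bar{P}_{G,o,R}\chi_u,\chi_v\rangle$ for $u,v\in V(B_r(G,o))$ are determined by the isomorphism type of $B_{R+1}(G,o)$ alone. As the degrees are bounded by $D$, there are only finitely many such isomorphism types, so $W_r^{(R)}$ depends only on $B_{R+1}(G,o)$ and is continuous (indeed locally constant) and bounded on the space of rooted graphs. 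Consequently the Benjamini-Schramm convergence $U(G_n)\to(G,o)$ yields, for each fixed $R$ and each bounded continuous window function $f$,
\[\mathbb{E}\big[f(W_r^{(R)}(G_n,o_n))\big]\longrightarrow \mathbb{E}\big[f(W_r^{(R)}(G,o))\big]\qquad(n\to\infty).\]

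Next I would splice in Lemma \ref{lemma12}. Given $\varepsilon>0$ and a bounded Lipschitz window function $f$, the lemma provides an $R=R(\varepsilon)$ (which we may take to tend to infinity as $\varepsilon\to 0$, since the lemma holds for all $R\ge R_2$) such that for every $n$ all but at most $\varepsilon|V(G_n)|$ roots $o$ satisfy $|\langle(\bar{P}_{G_n}-\bar{P}_{G_n,o,R})\chi_u,\chi_v\rangle|<\varepsilon$ for all $u,v\in V(B_r(G_n,o))$. Since $W_r$ and $W_r^{(R)}$ carry the same underlying ball $B_r(G_n,o)$ and differ only in their real entries, on this good set of roots $f(W_r)$ and $f(W_r^{(R)})$ differ by at most (Lipschitz constant)$\cdot\varepsilon$, and on the exceptional set by at most $2\|f\|_\infty$; hence $|\mathbb{E}[f(W_r(G_n,o_n))]-\mathbb{E}[f(W_r^{(R)}(G_n,o_n))]|\le C\varepsilon$ uniformly in $n$. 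Combining this with the displayed convergence gives $\limsup_n|\mathbb{E}[f(W_r(G_n,o_n))]-\mathbb{E}[f(W_r^{(R(\varepsilon))}(G,o))]|\le C\varepsilon$.

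Finally I would identify the limit by letting $\varepsilon\to 0$, which requires the one genuinely analytic input about the limit object. For a connected graph $G$ of bounded degree, the span of $\{A\chi_v:v\in V(B_R(G,o))\}$ increases with $R$ to the span of all columns of $A$, whose closure is $\overline{\mathrm{range}(A)}=(\ker A)^\perp=\mathrm{range}(\Pi_G)$; thus the projections $\Pi_{G,o,R}$ increase strongly to $\Pi_G$, equivalently $\bar{P}_{G,o,R}\to \bar{P}_G$ strongly, as $R\to\infty$. In particular $W_r^{(R)}(G,o)\to W_r(G,o)$ for every rooted graph in the support of the limit, so by dominated convergence $\mathbb{E}[f(W_r^{(R(\varepsilon))}(G,o))]\to \mathbb{E}[f(W_r(G,o))]$ as $\varepsilon\to 0$. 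Letting $\varepsilon\to 0$ in the previous estimate therefore yields $\mathbb{E}[f(W_r(G_n,o_n))]\to\mathbb{E}[f(W_r(G,o))]$ for every bounded Lipschitz $f$, which is the desired weak convergence of window laws. The main obstacle is precisely the reconciliation of the global nature of $\bar{P}_{G_n}$ with the purely local information retained in the limit: Lemma \ref{lemma12} handles this on the finite graphs, while the monotone strong convergence $\Pi_{G,o,R}\uparrow \Pi_G$ identifies the limit of the local truncations with $\bar{P}_G$; the delicate point is the interchange of the two limits $n\to\infty$ and $R\to\infty$, which is made legitimate exactly by the uniform-in-$n$ bound coming from Lemma \ref{lemma12}.
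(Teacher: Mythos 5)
Your proof is correct and follows essentially the same route as the paper's: both approximate $\bar{P}_{G_n}$ by the local truncations $\bar{P}_{G_n,o,R}$ via Lemma \ref{lemma12}, exploit that the truncated window entries depend only on the $(R+1)$-ball so that Benjamini-Schramm convergence transfers them, and identify the limit through the strong operator convergence $\Pi_{G,o,R}\to\Pi_G$ on the limit object, with the uniform-in-$n$ bound from Lemma \ref{lemma12} justifying the interchange of the limits $n\to\infty$ and $R\to\infty$. The only difference is presentational: you test against bounded Lipschitz window functions, whereas the paper runs the same three-term estimate through a coupling (Wasserstein) characterization of weak convergence.
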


Combining this theorem with Theorem \ref{thmintdet}, Theorem \ref{entdetcont} and  the fact that $\log \mm(G)=H(\mathcal{U}_G)$ for any finite tree $G$, we obtain Theorem \ref{theoremFO}. Note that,  we only need to use Theorem~\ref{projconv} in the special case where every graph $G_i$ is a tree, but the Theorem \ref{projconv} is true for arbitrary graphs.

\begin{remark}\label{remarklim}
The results in \cite{meszaros2019limiting} also give us a formula for the limit in Theorem~\ref{theoremFO}. Let us consider a rooted tree $(G,o)$. For a vertex $v\in V(G)$, let $I(v)$ be the indicator of the event that $v\in \mathcal{U}_G$. Furthermore, let $\ell$ be a $[0,1]$ labeling of the vertices of $G$. Let 
\[\bar{h}(G,o,\ell)=H(I(o)|\{I(v)|\ell(v)<\ell(o)\}),\footnote{Here, $H$ is the conditional Shannon-entropy.}\]
and let 
\[\bar{h}(G,o)=\mathbb{E}\bar{h}(G,o,\ell),\]
where the expectation is over an i.i.d. uniform random $[0,1]$ labeling of the vertices. 

Then if the sequence of finite trees $G_1,G_2,\dots$ Benjamini-Schramm converges to the random rooted graph $(G,o)$, then we have 
\[\lim_{n\to\infty}\frac{\log \mm(G_n)}{|V(G_n)|}=\mathbb{E}\bar{h}(G,o),\]
where the expectation is over the random choice of $(G,o)$. 
\end{remark}

\subsection{Description of the projection matrices $P_G^z$ and $\bar{P}_G$}

This subsection contains a few complementary results that are not needed for the proof of Theorem \ref{thmintdet}, Theorem \ref{thmlocal} and Theorem \ref{theoremFO}.   We will give a description of the entries of the  projection matrices $P_G^z$ and $\bar{P}_G$. Note that the diagonal entries were already determined by Bordenave, Lelarge and Salez \cite{bls11,bls13}.

Let $(G,S,T)$ be a possibly infinite bipartite tree with uniformly bounded degrees. 
Let us fix a root $o\in V$. Then it makes sense to speak about the children of a given vertex. (A vertex $y$ is a child of a vertex $x$, if $y$ is neighbor of $x$, and $y$ is not on the unique path from $o$ to $x$.)  For a vertex $x$ the notation $\sum_{y\succ x}$ means summation over the children $y$ of $x$. 

Let us consider the following system of equations in the variables  $(m_x^z)_{x\in V}$: 
\begin{align}\label{rekur}
m^z_x=\frac{z^2}{z^2+\sum_{y\succ x} m^z_y}\qquad\qquad(x\in V).
\end{align} 

Note that $m_x^z=1$ for any leaf $x\neq o$. For a finite tree $G$, the other values can be easily computed by moving from the leaves towards the root. In particular, the system of equation above  has a unique solution, which is in $[0,1]^V$. We denote this solution by $m_{G,o,x}^z$.

It turns out that this is also true for any infinite tree $G$ like above.

Let $G_n$ be the subgraph induced by the vertices which are at most distance $n$ from the root~$o$.
\begin{lemma}[Bordenave, Lelarge, Salez \cite{bls13}]\label{uniq}\label{fixedpointptemp}
For a tree $G$ with uniformly bounded degrees and  any $z\neq 0$ the limit
\[m_x^z=\lim_{n\to\infty} m_{G_n,o,x}^z\]
exists for all $x\in V$. Moreover, the vector $(m_x^z)$ is the unique solution in $[0,1]^V$ of the system of equations given in \eqref{rekur}.
\end{lemma}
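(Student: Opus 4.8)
The plan is to reduce the statement to the convergence and uniqueness of a single monotone‑type iteration, and then to pin the limit down by identifying $m^z_x$ with a diagonal Green's function of the adjacency operator.

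First I would package the truncated computations as one global iteration. Define $\Phi\colon[0,1]^{V}\to[0,1]^{V}$ by $\Phi(m)_x=z^2/(z^2+\sum_{y\succ x}m_y)$, with the empty sum giving value $1$. Since a depth‑$n$ truncation assigns the boundary value $1$ to its deepest vertices and then applies \eqref{rekur} upward, an easy induction shows $m^z_{G_n,o,x}=(\Phi^{\,n-d(o,x)}\mathbf 1)_x$, where $\mathbf 1$ is the all‑ones vector and $d(o,x)$ the distance from the root. All iterates lie in $(0,1]^V$ because $0<\Phi(\cdot)_x\le1$ whenever $z\neq0$. Thus, for each fixed $x$, proving that $\lim_n m^z_{G_n,o,x}$ exists and equals the unique fixed point amounts to showing that the sequence $m^{(k)}:=\Phi^k(\mathbf 1)$ converges coordinatewise to the unique solution of \eqref{rekur}.

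Next I would exploit the order structure. The map $\Phi$ is antitone for the coordinatewise order (increasing any $m_y$ decreases every parent coordinate), so $\Phi^2$ is monotone; moreover $\Phi$ is continuous, since the uniform degree bound makes each coordinate a rational function of finitely many others with denominator at least $z^2>0$. Starting from the top element $\mathbf 1$, the standard lattice argument gives that the even iterates $\Phi^{2k}(\mathbf 1)$ decrease to some $a^*$ and the odd iterates $\Phi^{2k+1}(\mathbf 1)$ increase to some $b^*$, with $b^*\le a^*$, and that these limits satisfy $a^*=\Phi(b^*)$ and $b^*=\Phi(a^*)$; passing to the limit in the recursion is legitimate by continuity. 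The same argument shows that every solution $p\in[0,1]^V$ of \eqref{rekur} is sandwiched, $b^*\le p\le a^*$. Existence of the limit alone can also be read off directly from the already cited convergence theorem for Boltzmann matchings applied to the infinite subtree hanging below $x$, since $(\Phi^k\mathbf 1)_x$ is exactly the probability that $x$ is uncovered in the depth‑$k$ truncation and ``$x$ is uncovered'' is a local event; but this still leaves the uniqueness open.

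The main obstacle is to collapse the two‑periodic pair, i.e.\ to prove $a^*=b^*$; this is precisely the absence of a phase transition for the recursion \eqref{rekur}, and it is where the genuine content lies — the naive Lipschitz/contraction bound along root‑to‑leaf paths only closes when $z^2>D-1$, so a crude estimate is not enough. To handle all $z\neq0$ I would identify $m_x^z$ with a resolvent matrix element. Writing $A_x$ for the (bounded, self‑adjoint) adjacency operator of the subtree $T_x$ below $x$ and $r_x=\langle(A_x-iz)^{-1}\chi_x,\chi_x\rangle$, the Schur complement along the tree yields $r_x=1/(-iz-\sum_{y\succ x}r_y)$, and one checks that $m_x:=-iz\,r_x$ satisfies exactly \eqref{rekur}. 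Using the spectral measure $\mu_x$ of $A_x$ at $\chi_x$ — a probability measure that is symmetric about $0$ because $T_x$ is bipartite — this gives the closed form
\[m_x^z=\int\frac{z^2}{\lambda^2+z^2}\,d\mu_x(\lambda)\in(0,1],\]
which is real and manifestly unique, as the resolvent of a self‑adjoint operator at the non‑real point $iz$ is uniquely determined. To turn this into uniqueness of \eqref{rekur} in $[0,1]^V$, I would show that any solution $(\tilde m_x)$ produces, via $\tilde r_x=i\tilde m_x/z$, a solution of the Green's‑function recursion lying in the Herglotz class (positive imaginary part), and invoke the standard uniqueness of such solutions on trees — equivalently, the fact that the Möbius maps $w\mapsto 1/(-iz-w)$ strictly contract the hyperbolic metric of the upper half‑plane, propagated along the tree. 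Applying this to the two fixed points $a^*$ and $b^*$ forces $a^*=b^*=:m^z$, whence $m^{(k)}\to m^z$ coordinatewise, which is exactly the assertion $\lim_n m^z_{G_n,o,x}=m^z_x$ together with uniqueness.
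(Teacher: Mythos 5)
First, a point of context: the paper itself gives no proof of Lemma \ref{fixedpointptemp} --- it is imported verbatim from Bordenave--Lelarge--Salez \cite{bls13} --- so your argument has to stand on its own, and most of it does. The reduction of $m^z_{G_n,o,x}$ to the iterates $\Phi^k(\mathbf 1)$ is correct, the antitone/lattice argument producing $b^*\le p\le a^*$ is correct, and the resolvent identification is correct: with $r_x=\langle (A_x-iz)^{-1}\chi_x,\chi_x\rangle$ one indeed checks $m_x:=-iz\,r_x$ solves \eqref{rekur}, and bipartiteness of $T_x$ gives the real closed form $\int z^2(\lambda^2+z^2)^{-1}d\mu_x(\lambda)\in(0,1]$. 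However, the final step ``applying this to the two fixed points $a^*$ and $b^*$ forces $a^*=b^*$'' does not follow as stated: $a^*$ and $b^*$ are fixed points of $\Phi^2$, \emph{not} of $\Phi$, so neither of them solves \eqref{rekur}, and your correspondence (which converts solutions of \eqref{rekur} into Herglotz solutions of the Green's-function recursion) simply does not apply to them. The fix is easy but needs to be said: since every child of an even-depth vertex has odd depth, the staggered vector $\tilde m_x=a^*_x$ (even depth), $\tilde m_x=b^*_x$ (odd depth) \emph{is} a genuine solution of \eqref{rekur} because $a^*=\Phi(b^*)$ and $b^*=\Phi(a^*)$; so is the vector with the roles swapped; uniqueness applied to these two honest solutions then yields $a^*=b^*$. (Alternatively, run the contraction argument below directly between the iterates $\Phi^k(\mathbf 1)$ and the resolvent solution, which gives existence and uniqueness in one stroke and lets you discard the lattice argument entirely.)

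The second soft spot is the phrase ``the M\"obius maps $w\mapsto 1/(-iz-w)$ strictly contract the hyperbolic metric, propagated along the tree.'' At a vertex with several children the recursion is not a single M\"obius map, and comparing two solutions one coordinate at a time via Schwarz--Pick yields a \emph{sum} of distances over the children, not a maximum --- which blows up with depth, i.e.\ exactly the failure mode of the naive Lipschitz bound you correctly rejected. The propagation closes only via the half-plane inequality for sums: for $\gamma(w,w')=|w-w'|^2/(\mathrm{Im}\,w\,\mathrm{Im}\,w')$ one has $\gamma(a+b,a'+b')\le\max\bigl(\gamma(a,a'),\gamma(b,b')\bigr)$, together with the exact identity
\begin{equation*}
\gamma\bigl(\phi(\zeta),\phi(\zeta')\bigr)=\frac{\mathrm{Im}\,\zeta\;\mathrm{Im}\,\zeta'}{(z+\mathrm{Im}\,\zeta)(z+\mathrm{Im}\,\zeta')}\,\gamma(\zeta,\zeta'),\qquad \phi(\zeta)=\frac{1}{-iz-\zeta},
\end{equation*}
(taking $z>0$, as \eqref{rekur} only involves $z^2$). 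Since any Herglotz solution automatically satisfies $|r_y|\le 1/z$, the sum over at most $D$ children has $\mathrm{Im}\,\zeta\le D/z$, so the factor above is at most $\bigl(D/(z^2+D)\bigr)^2<1$ uniformly, and since all solution values lie in a fixed compact subset of the upper half-plane, iterating along $n$ generations kills the difference at the root. This max-metric contraction of addition (Aizenman--Sims--Warzel style) is the actual mathematical content of the uniqueness claim for all $z\neq 0$; it is fine to cite it as standard, but as written your sentence conceals the only genuinely nontrivial point of the proof.
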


For $x\in V$, let $o=v_0,v_1,v_2,\dots,v_k=x$ be the unique path from $o$ to $x$. %Note that if $o\in T$, then $k$ is odd; if $o\in S$, then $k$ is even. 
Let
\[
w_x^z=z^{-k}\prod_{i=0}^k m_{v_i}^z.
\] 

Given a vertex $x$, let $\ell(x)$ be its distance form the root. 

\begin{lemma}\label{lemmamatrix}
Let $z\neq 0$. If $o\in S$, then
\[\langle P_G^z \chi_o, \chi_x\rangle =
\begin{cases}
(-1)^{\lfloor\ell(x)/2\rfloor} w_x^z &\text{ if }x\in S,\\
(-1)^{\lfloor(\ell(x)-1)/2\rfloor}  w_x^z &\text{ if }x\in T.
\end{cases}
\] 
If $o\in T$, then
\[\langle P_G^z \chi_o, \chi_x\rangle=
\begin{cases}
1-m^z_o &\text{ if }x=o,\\
(-1)^{\lfloor(\ell(x)-1)/2\rfloor} w_x^z &\text{ if }o\neq x\in T,\\
(-1)^{\lfloor\ell(x)/2\rfloor}  w_x^z &\text{ if }x\in S.
\end{cases}\]
\end{lemma}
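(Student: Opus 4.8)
The plan is to use the defining property of the orthogonal projection: $\xi:=P_G^z\chi_o$ is the unique vector with $\xi\in R_G^z$ and $\chi_o-\xi\perp R_G^z$, and then to simply check that the vector prescribed by the claimed formula satisfies these two conditions, so no separate uniqueness argument is required. I would first treat a finite tree $G$ and translate both conditions into coordinates, writing $\xi_x=\langle P_G^z\chi_o,\chi_x\rangle$. Using the description of $R_G^z$ as the row space of $B_G^z=(zI\ H_G)$ from \eqref{BGzdef} and the assumption $z\neq 0$, membership $\xi\in R_G^z$ is equivalent to the system $z\xi_t=\sum_{s\sim t}\xi_s$ for all $t\in T$: given such a $\xi$, the coefficient vector $c_s=\xi_s/z$ reconstructs $\xi=\sum_{s}c_s\,b_G^z(s)$. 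Since $R_G^z$ is generated by the $b_G^z(s)$, the orthogonality condition $\chi_o-\xi\perp R_G^z$ is equivalent to $z\xi_s+\sum_{t\sim s}\xi_t=\langle\chi_o,b_G^z(s)\rangle$ for every $s\in S$; here the right-hand side equals $z\,\delta_{o,s}$ when $o\in S$ and the adjacency indicator $\mathbf 1_{o\sim s}$ when $o\in T$, because $b_G^z(s)=z\chi_s+\sum_{y\sim s}\chi_y$ and $o,s$ lie in opposite colour classes exactly when $o\in T$.

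The verification then rests on a single recursive identity for the weights. Rooting $G$ at $o$ and letting $p$ denote the parent of a non-root vertex $x$, the definition of $w_x^z$ gives at once
\[w_x^z=z^{-1}m_x^z\,w_p^z.\]
Substituting the claimed formula into the membership equation at a vertex $t\in T$, the prescribed signs are arranged so that the contributions of the parent and the children of $t$ carry a common sign; after cancelling that sign and dividing by $w_p^z\neq 0$ (legitimate since $m_x^z\in(0,1]$ for $z\neq 0$), the equation collapses to $m_t^z\big(z^2+\sum_{y\succ t}m_y^z\big)=z^2$, which is precisely \eqref{rekur}. The orthogonality equation at a vertex $s\in S$ reduces in the same manner to \eqref{rekur} at $s$. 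The only delicate part of this step is the sign bookkeeping: one checks that for $o\in S$ the weight $w_x^z$ carries the sign $(-1)^{\lfloor\ell(x)/2\rfloor}$ in both colour classes, and that the floor functions shift by exactly one between adjacent levels, so that each local equation has a single overall sign to cancel.

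The root is handled separately. When $o\in S$, the orthogonality equation at $s=o$ reproduces \eqref{rekur} at $o$. When $o\in T$, the special diagonal value $\xi_o=1-m_o^z$ is exactly what is needed: it makes both the membership equation at $t=o$ and the orthogonality equations at the children of $o$ reduce to \eqref{rekur} at $o$. Since \eqref{rekur} holds by the definition of $m_x^z$, both defining conditions of the projection are satisfied and the finite case is complete.

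For an infinite tree $G$ I would argue by exhaustion, taking $G_n=B_n(G,o)$. The finite computation above shows that the claimed formula, with $m$ replaced by $m_{G_n,o,\cdot}^z$, equals the entries $\langle P_{G_n}^z\chi_o,\chi_x\rangle$, and by Lemma \ref{uniq} we have $m_{G_n,o,x}^z\to m_x^z$, so the weights and the finite formulas converge entrywise to the claimed infinite formula. What remains, and what I expect to be the main obstacle, is the convergence $\langle P_{G_n}^z\chi_o,\chi_x\rangle\to\langle P_G^z\chi_o,\chi_x\rangle$ of the projection entries. The subtlety is that passing to the ball $G_n$ deletes the edges leaving level $n$, so the generators $b_{G_n}^z(s)$ do not literally agree with $b_G^z(s)$ near the boundary and one cannot invoke a naive monotonicity of the subspaces $R_{G_n}^z$. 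I would establish this convergence either directly, controlling the boundary contribution by a strong-convergence argument for the associated operators using the uniform degree bound, or in the spirit of the paper by combining the convergence in law of $\Delta(\mathcal{M}_{G_n}^z)$ to $\Delta(\mathcal{M}_G^z)$ with Theorem \ref{postempdet}, which identifies these laws with the determinantal processes of $R_{G_n}^z$ and $R_G^z$ and thereby pins down the limiting kernel (the overall signs being already fixed by the explicit finite formula).
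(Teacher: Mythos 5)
Your finite-tree argument is correct, and it takes a genuinely different route from the paper's. You \emph{verify} that the vector prescribed by the formula satisfies the two defining properties of the orthogonal projection, and both properties collapse, via the identity $w_x^z=z^{-1}m_x^z w_{p(x)}^z$, to the fixed-point equation \eqref{rekur}; I checked the sign bookkeeping (including your observation that for $o\in S$ both colour classes carry the sign $(-1)^{\lfloor\ell(x)/2\rfloor}$, and the role of the diagonal value $1-m_o^z$ when $o\in T$) and it works. The paper instead \emph{constructs} $P_G^z\chi_o$: it proves a recursion for the subtree projections $P_x^z\chi_x$, identifies $h_x^z$ (resp.\ $1-h_x^z$) with $m_x^z$ via the uniqueness in Lemma \ref{fixedpointptemp}, and expands $P_G^z\chi_o$ level by level with a remainder supported far from the root. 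The payoff of the paper's construction is that it applies verbatim to infinite trees, with no exhaustion; your verification is shorter for finite trees but pushes all the difficulty of the infinite case into a limiting argument.

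That is where your proposal has a genuine gap, which you yourself flag: the convergence $\langle P_{G_n}^z\chi_o,\chi_x\rangle\to\langle P_G^z\chi_o,\chi_x\rangle$ along the ball exhaustion. Of your two suggested patches, the determinantal one cannot work as stated: the law of a determinantal process determines a real symmetric kernel only up to conjugation by a diagonal $\pm1$ matrix --- for instance $\frac12\bigl(\begin{smallmatrix}1&1\\1&1\end{smallmatrix}\bigr)$ and $\frac12\bigl(\begin{smallmatrix}1&-1\\-1&1\end{smallmatrix}\bigr)$ are distinct projections inducing the same process --- so identifying the limiting law via Theorem \ref{postempdet} does not identify the limiting kernel, and saying the signs are ``fixed by the explicit finite formula'' is circular, since convergence of the kernel entries is exactly what is at stake. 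Your first patch does work, but the missing idea is a parity trick, which is precisely how the paper handles the infinite case of Theorem \ref{postempdet}: if $o\in T$, exhaust by balls of \emph{even} radius (if $o\in S$, odd radius); then every $s\in S\cap V(G_n)$ has all of its $G$-neighbours inside $G_n$, hence $b_{G_n}^z(s)=b_G^z(s)$, the subspaces $R_{G_n}^z$ are nested with union dense in $R_G^z$, and $P_{G_n}^z\to P_G^z$ in the strong operator topology; combined with Lemma \ref{uniq} this completes your limit. Alternatively, and more in the spirit of your own proof, you can avoid exhaustion altogether: by Lemma \ref{ortho}, membership of $\xi$ in the \emph{closed} subspace $R_{G,S,T}^z$ is equivalent to $\xi\perp R_{G,T,S}^{-z}$, i.e.\ to the very same local system $z\xi_t=\sum_{s\sim t}\xi_s$ ($t\in T$) that you used in the finite case; the only extra ingredient is that the candidate vector lies in $\ell^2(V)$, which follows by applying Fatou's lemma to the finite-ball bounds $\sum_{x}\langle P_{G_n}^z\chi_o,\chi_x\rangle^2=\|P_{G_n}^z\chi_o\|^2\le1$ together with Lemma \ref{uniq}.
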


\begin{remark}
Combining the lemma above with  Theorem \ref{postempdet}, we see that $m_o^z$ is the probability that $o$ is uncovered by $\mathcal{M}_G^z$.
\end{remark}
Now we state the zero temperature version of Lemma \ref{fixedpointptemp}.

\begin{lemma}[Bordenave, Lelarge, Salez \cite{bls13}]
For each $x\in V$, the limit
\[m_x=\lim_{z\to 0} m_x^z\]
exists. Moreover, $(m_x)_{x\in V}$ is the largest solution in $[0,1]^V$ of the recursion 
\begin{equation}\label{zerorek}
m_x=\frac{1}{1+\sum_{y\succ x}\left(\sum_{u\succ y} m_u\right)^{-1}},
\end{equation}
with the convention $0^{-1}=\infty$ and $\infty^{-1}=0$.
\end{lemma}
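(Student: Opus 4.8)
The plan is to extract everything from the positive-temperature recursion \eqref{rekur} by iterating it once. Substituting \eqref{rekur} for each child $m^z_y$ into the denominator of $m^z_x$ and clearing $z^2$ gives the two-step form
\[
m^z_x=\frac{1}{1+\sum_{y\succ x}\dfrac{1}{z^2+\sum_{u\succ y}m^z_u}},
\]
which is the positive-temperature analogue of \eqref{zerorek}; letting $z\to 0$ in this identity is what will produce \eqref{zerorek}, once the limit is known to exist. So I would first secure existence, then pass to the limit, then handle maximality.

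For existence I would show that for every fixed $x$ the map $z\mapsto m^z_x$ is non-decreasing on $(0,\infty)$. On a finite tree this follows by induction on the height of the subtree rooted at $x$, using the two-step form: the base cases (leaves, and vertices all of whose children are leaves) are immediate, and in the inductive step each grandchild sum $\sum_{u\succ y}m^z_u$ is non-decreasing in $z$ by hypothesis, so each term $1/(z^2+\sum_{u\succ y}m^z_u)$ is non-increasing, whence $m^z_x$ is non-decreasing. The one-step recursion alone will not suffice here, since it is antitone in the children's values; applying it twice is exactly what makes the monotonicity go through. For an infinite tree, $m^z_x=\lim_n m^z_{G_n,o,x}$ by Lemma~\ref{fixedpointptemp}, and a pointwise limit of non-decreasing functions is non-decreasing. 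Being non-decreasing and bounded in $[0,1]$, the limit $m_x=\lim_{z\to 0^+}m^z_x=\inf_{z>0}m^z_x$ exists.

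Next I pass to the limit in the two-step identity. Since the degrees are bounded by $D$, the sum over children $y\succ x$ is finite and the limit may be taken term by term. For each child $y$ the denominator $z^2+\sum_{u\succ y}m^z_u$ tends to $\sum_{u\succ y}m_u$, and it stays strictly positive for $z>0$, so $1/(z^2+\sum_{u\succ y}m^z_u)\to(\sum_{u\succ y}m_u)^{-1}$ with the stated conventions $0^{-1}=\infty$, $\infty^{-1}=0$ (the case $\sum_{u\succ y}m_u=0$ being precisely where the denominator tends to $0$ from above). This yields \eqref{zerorek} for $(m_x)$.

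The main obstacle is the maximality statement, which I would phrase through the two-step maps. Let $\Phi$ and $\Phi^z$ denote the self-maps of $[0,1]^{V}$ given by the right-hand sides of \eqref{zerorek} and of the displayed identity; both are order-preserving, and since $z^2>0$ one has $\Phi^z\ge\Phi$ pointwise (here the $0^{-1}=\infty$ convention must be tracked carefully). The key input is that $m^z$ is the \emph{largest} fixed point of $\Phi^z=(F^z)^2$, where $F^z$ is the one-step map of \eqref{rekur}: computing $m^z_{G_n,o,x}$ leaves-upward amounts to iterating $F^z$ from the constant vector $\mathbf 1$ through $n-\ell(x)$ levels, so $m^z_x=\lim_j((F^z)^j\mathbf 1)_x$, and Lemma~\ref{fixedpointptemp} — asserting that this converges irrespective of the parity of the truncation depth — gives $\lim_k(\Phi^z)^k\mathbf 1=\lim_k(F^z)^{2k}\mathbf 1=m^z$; since the isotone map $\Phi^z$ iterated downward from the top $\mathbf 1$ decreases to its largest fixed point, this identifies $m^z$ as that largest fixed point. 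Now let $\tilde m$ be any solution of \eqref{zerorek}. Then $\tilde m=\Phi\tilde m\le\Phi^z\tilde m$, so the non-decreasing sequence $(\Phi^z)^j\tilde m$ converges to a fixed point of $\Phi^z$ lying above $\tilde m$, which is at most $m^z$. Hence $\tilde m\le m^z$ for every $z>0$, and letting $z\to 0$ gives $\tilde m\le m$; as $m$ is itself a solution, it is the largest one. The delicate points to nail down rigorously are the identification of leaves-upward truncation with iteration of $F^z$ from $\mathbf 1$ (so that Lemma~\ref{fixedpointptemp} genuinely yields maximality of $m^z$ for $\Phi^z$, rather than merely uniqueness of the one-step fixed point) and the consistent bookkeeping of the $0^{-1}=\infty$ convention in the comparison $\Phi^z\ge\Phi$.
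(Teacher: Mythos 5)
The paper never proves this lemma: it is quoted from Bordenave, Lelarge and Salez \cite{bls13} and used as a black box, so there is no in-paper proof to compare yours against; what you have written is a self-contained replacement, and it is correct. The structure holds up at every point I checked: the two-step identity does follow from substituting \eqref{rekur} into itself and dividing through by $z^2$; the monotonicity of $z\mapsto m^z_x$ is genuinely a two-step phenomenon (the one-step map is antitone in the children's values, as you note), and your induction on subtree height, combined with Lemma \ref{fixedpointptemp} to transfer the statement to infinite trees as a pointwise limit of monotone functions, is valid; the term-by-term passage to the limit with the conventions $0^{-1}=\infty$, $\infty^{-1}=0$ is consistent, since $\sum_{u\succ y}m_u=0$ forces both sides of \eqref{zerorek} to vanish; and the maximality argument is correctly reduced to the identity $m^z_{G_n,o,x}=((F^z)^{n-\ell(x)}\mathbf 1)_x$, which is legitimate because the recursion at $x$ involves only descendants of $x$, so that $m^z=\lim_k(\Phi^z)^k\mathbf 1$ is the largest fixed point of the isotone map $\Phi^z$, after which $\tilde m=\Phi\tilde m\le\Phi^z\tilde m$ and iteration push any solution of \eqref{zerorek} below $m^z$, hence below $m=\inf_{z>0}m^z$. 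One routine point should be made explicit to fully close the argument: the pointwise limits of the monotone sequences $(\Phi^z)^k\mathbf 1$ and $(\Phi^z)^j\tilde m$ are indeed fixed points of $\Phi^z$, because each coordinate of $\Phi^z$ depends on only finitely many coordinates (bounded degree) and is continuous in them, the inner denominators being bounded below by $z^2>0$, so pointwise convergence commutes with applying $\Phi^z$. With that observation added, your proof is complete and recovers the cited result of \cite{bls13} using nothing from that paper beyond Lemma \ref{fixedpointptemp}, which you invoke explicitly.
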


\
 For a vertex $x$ such that $x$ is at even distance from the root $o$, let $o=v_0,v_1,\dots,v_{2k}=x$, be the unique path from $o$ to $x$. We define
\[w_x=m_o\prod_{i=1}^{k} \frac{m_{v_{2i}}}{\sum_{y\succ v_{2i-1} m_y}},\]
with the convention that $0/0=0$, that is, $w_x=0$ whenever $m_{v_{2i}}=0$ for some $i$.

\begin{theorem}\label{kerprojd}
Let $\bar{P}_G$ be the orthogonal projection to the kernel of the adjacency operator of $G$. Then
\[\langle \bar{P}_G \chi_o, \chi_x\rangle=
\begin{cases}
%m_o &\text{ if }x=o\\
(-1)^{\lfloor(\ell(x)+1)/2\rfloor} w_x &\text{ if $x$ is at even distance from $o$},\\
0  &\text{ if $x$ is at odd distance from $o$}.
\end{cases}\]
\end{theorem}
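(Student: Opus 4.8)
The plan is to obtain the zero-temperature formula for $\langle \bar{P}_G\chi_o,\chi_x\rangle$ by taking the limit $z\to 0$ of the positive-temperature formula in Lemma \ref{lemmamatrix}, using that $\bar{P}_G$ should arise as the limit of the positive-temperature projections. First I would clarify the relationship between $P_G^z$ and $\bar{P}_G$. The space $R_G^z$ is the row space of $B_G^z=(zI\ \ H_G)$, whose rows are $(b_G^z(s))_{s\in S}$; as $z\to 0$ these degenerate toward the vectors $b_G^0(s)$, which span (part of) the row space of the adjacency matrix. Since $\bar{P}_G=I-\Pi_G$ projects onto the kernel of the adjacency operator, and on a bipartite tree the adjacency matrix has the block form $\left(\begin{smallmatrix}0 & H_G\\ H_G^T & 0\end{smallmatrix}\right)$, I would first establish an explicit algebraic relation between $P_G^z$ and $\bar{P}_G$. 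The natural guess, to be verified, is that on the $S$-rows $\langle \bar P_G\chi_o,\chi_x\rangle = \lim_{z\to 0} z^{-1}\langle\,(I-P_G^z)\chi_o,\chi_x\rangle$ or a similarly rescaled limit, because the kernel projection is the ``extra'' directions lost by $R_G^z$ as the diagonal perturbation $zI$ vanishes.

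Concretely, I would proceed as follows. Fix $o\in S$ (the case $o\in T$, or $o$ at odd distance, should follow by the parity structure and symmetry of the two color classes). From Lemma \ref{lemmamatrix}, for $x\in S$ we have $\langle P_G^z\chi_o,\chi_x\rangle=(-1)^{\lfloor \ell(x)/2\rfloor} w_x^z$ with $w_x^z=z^{-k}\prod_{i=0}^k m_{v_i}^z$ where $\ell(x)=2k$ along the path $o=v_0,\dots,v_{2k}=x$. The key computation is to analyze the $z\to 0$ asymptotics of $w_x^z$. Using the recursion \eqref{rekur}, $m_x^z=z^2/(z^2+\sum_{y\succ x} m_y^z)$, one sees that for vertices at odd distance the $m^z$ stay bounded away from $0$ while for certain even-distance vertices $m_x^z\to m_x$ with $m_x$ given by the zero-temperature recursion \eqref{zerorek}. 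I would track the powers of $z$ carefully: each factor $m_{v_{2i-1}}^z$ at an odd vertex contributes, through \eqref{rekur}, a factor of order $z^2/(\sum_{u\succ v_{2i-1}} m_u^z)$, and these combine with the prefactor $z^{-k}$ and the even-vertex factors $m_{v_{2i}}^z\to m_{v_{2i}}$ to produce exactly the product defining $w_x$, namely $w_x=m_o\prod_{i=1}^{k} m_{v_{2i}}/(\sum_{y\succ v_{2i-1}} m_y)$. The sign bookkeeping $(-1)^{\lfloor \ell(x)/2\rfloor}$ versus the claimed $(-1)^{\lfloor(\ell(x)+1)/2\rfloor}$ must be reconciled, presumably because the kernel entries pick up an extra sign relative to the row-space-of-$P_G^z$ entries; I would determine the correct conversion factor from the $z\to 0$ limit relating $P_G^z$ and $\bar P_G$.

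For the odd-distance case, I would show $\langle \bar P_G\chi_o,\chi_x\rangle=0$. This should follow because $\bar P_G$ is the projection onto $\ker$ of the adjacency operator, and on a bipartite tree with $o\in S$ any kernel vector has its $S$-supported and $T$-supported parts each lying in the kernel separately (the adjacency operator maps $\ell^2(S)\to\ell^2(T)$ and $\ell^2(T)\to\ell^2(S)$, so the kernel splits along the bipartition). Hence $\bar P_G\chi_o$, with $o\in S$, is supported on $S$, i.e. on even-distance vertices, giving $0$ on odd-distance $x$. I would make this rigorous using the block structure and the fact that $\bar P_G=\bar P_G^S\oplus \bar P_G^T$ as projections respecting the bipartition.

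The main obstacle will be the $z\to 0$ asymptotic analysis of $w_x^z$ and the careful tracking of the exact power of $z$ at each vertex along the path. The recursion \eqref{rekur} has qualitatively different behavior at even and odd vertices as $z\to 0$ (some $m^z_x$ tend to positive limits, others to $0$, at rates that are powers of $z$), and the delicate point is to verify that the net power of $z$ in $z^{-k}\prod m_{v_i}^z$ is exactly $0$ so that a finite nonzero limit $w_x$ emerges, and that this limit matches the stated product with the convention $0/0=0$. I would need Lemma \ref{fixedpointptemp} (existence and uniqueness of the fixed point, and continuity in $z$) together with the zero-temperature recursion \eqref{zerorek} to control these limits uniformly, and I anticipate that handling the degenerate cases where some $m_{v_{2i}}=0$ (so $w_x=0$) requires separate care to ensure the limit of $w_x^z$ is genuinely $0$ rather than indeterminate.
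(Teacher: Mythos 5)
Your skeleton---obtain the theorem as the $z\to 0$ limit of Lemma \ref{lemmamatrix}, and get the odd-distance vanishing from the splitting $\ker A=(\ker A\cap\ell^2(S))\oplus(\ker A\cap\ell^2(T))$---is the same as the paper's, and two of your stated worries dissolve immediately. No rescaling is needed to relate $\bar{P}_G$ to $P_G^z$: by Lemmas \ref{ortho}, \ref{strongconv} and \ref{strongconv2}, if $o\in S$ then $\bar{P}_G\chi_o=\lim_{z\to 0}P_{G,S,T}^z\chi_o$, while if $o\in T$ then $\bar{P}_G\chi_o=\lim_{z\to 0}(I-P_{G,S,T}^z)\chi_o$ (the paper fixes the coloring with $o\in T$ and uses the latter); your guessed $z^{-1}$-rescaled limit is incorrect. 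Likewise there is no sign ``conversion factor'' to hunt for: for even $\ell(x)$ one has $\lfloor\ell(x)/2\rfloor=\lfloor(\ell(x)+1)/2\rfloor$, and in the $o\in T$ convention the minus sign coming from $I-P^z$ produces exactly the shift to $\lfloor(\ell(x)-1)/2\rfloor+1=\lfloor(\ell(x)+1)/2\rfloor$. Your plan for the non-degenerate case (all $m_{v_{2i}}>0$) is also the paper's computation: the recursion \eqref{rekur} gives the \emph{exact} identity $m^z_{v_{2i-1}}/z^2=\bigl(z^2+\sum_{y\succ v_{2i-1}}m^z_y\bigr)^{-1}$, so $w^z_x=m^z_o\prod_{i=1}^{k}m^z_{v_{2i}}\bigl(z^2+\sum_{y\succ v_{2i-1}}m^z_y\bigr)^{-1}$ and one passes to the limit factorwise, all denominators staying bounded away from $0$.

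The genuine gap is the degenerate case where some $m_{v_{2i}}=0$: you flag it as requiring ``separate care'' but supply no argument, and the route you hint at (showing $\lim_{z\to 0}w_x^z=0$ by tracking decay rates) fails factor by factor. Concretely, on the path $o,v_1,v_2,u$ rooted at $o$ with $x=v_2$, one has $m^z_{v_2}=z^2/(z^2+1)$, so the factor $m^z_{v_2}/(z^2+m^z_{v_2})=1/(z^2+2)\to 1/2\neq 0$, even though the convention $0/0=0$ assigns this factor the value $0$ in $w_x$; the product vanishes only because $m^z_o\to 0$ elsewhere. Thus a rate analysis must run a cascade along the whole path ($m_{v_{2i}}=0$ with all siblings' $m$ vanishing forces $m_{v_{2i-1}}>0$, which forces $m_{v_{2i-2}}=0$, and so on down to $m_o=0$), and this cascade rests on a dichotomy (``$m_x=0$ iff some child has $m_y>0$, in which case $m^z_x=\Theta(z^2)$'') that is proved by induction from the leaves and is simply \emph{false} for infinite trees: on the one-ended infinite path every vertex has $m^z_x\sim z$, yet the theorem is stated for infinite trees. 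The paper avoids all asymptotics here with a structural argument absent from your proposal: letting $\hat{G}$ be the subtree of descendants of $v_{2i}$, the identity $\|\bar{P}_{\hat{G}}\chi_{v_{2i}}\|^2=\lim_{z\to 0}m^z_{v_{2i}}=0$ shows $\chi_{v_{2i}}=P^0_{\hat{G},\hat{S},\hat{T}}\chi_{v_{2i}}$, hence $\chi_{v_{2i}}\in(\ker A)^\perp$; one then verifies that $P_{\hat{T}}\bar{P}_G\chi_o\in\ker A$ (checking orthogonality to every $b_G^0(v)$ in three cases), and if this vector were nonzero, subtracting it from $\bar{P}_G\chi_o$ would yield a kernel vector strictly closer to $\chi_o$, contradicting that $\bar{P}_G\chi_o$ is the minimizer of $\|\chi_o-h\|$ over $h\in\ker A$. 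Without this argument, or a complete rate analysis valid also for infinite trees, your proof of the case $m_{v_{2i}}=0$ is missing.
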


\begin{remark}\label{remarkmo}
Combining the lemma above with Theorem \ref{thmzerotdet}, we see that $m_o=\mathbb{P}(o\in \mathcal{U}_G)$. 
\end{remark}

\subsection{An example: Balls of the $d$-regular tree}

Fix $d\ge 3$. Let $\mathbb{T}_d$ be the $d$-regular infinite tree, and let $o$ be any vertex of it. Then the sequence of finite graphs $G_n=B_n(\mathbb{T}_d,o)$ is Benjamini-Schramm convergent. The limit of this sequence is the so called \emph{$d$-canopy tree}, see for example \cite[Lemma 2.8]{dembo2010gibbs}. For this particular sequence of finite trees, we can give an explicit formula for the limit in Theorem \ref{theoremFO}.  

\begin{theorem}\label{thmgomb}
Let $G_n$ be like above, then
\begin{equation}\label{canopy}
\lim_{n\to\infty} \frac{\log\mm(G_n)}{|V(G_n)|}=\frac{\log (d-1)}{d} + (d-2)^2\sum_{\ell=2}^{\infty} (d-1)^{-2\ell}\log \ell.
\end{equation}
\end{theorem}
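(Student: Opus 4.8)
The plan is to compute the limit directly by counting maximum matchings on the finite trees, rather than passing through the entropy formula of Remark \ref{remarklim}. Write $q=d-1$ and let $T_h$ denote the rooted complete $q$-ary tree of depth $h$, so the root sits at height $h$ and the leaves at height $0$; then $B_n(\mathbb{T}_d,o)$ is a central vertex joined to the roots of $d$ copies of $T_{n-1}$. Conditioning on whether the central vertex is matched, one checks that $\log\mm(B_n(\mathbb{T}_d,o))=d\log\mm(T_{n-1})+o(|V(T_{n-1})|)$, while $|V(B_n(\mathbb{T}_d,o))|=1+d\,|V(T_{n-1})|$; since the error term is negligible against the exponentially large denominator, the central vertex does not affect the limit and it suffices to evaluate $\lim_{h\to\infty}\frac{\log\mm(T_h)}{|V(T_h)|}$ (existence being in any case guaranteed by Theorem \ref{theoremFO}, as these balls are trees of bounded degree).

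Next I would set $N_h=\mm(T_h)$, split $N_h=a_h+b_h$ according to whether the root is covered or uncovered, and write $p_h=b_h/N_h$ for the fraction of maximum matchings leaving the root uncovered (equivalently $\mathbb{P}(\text{root}\in U(\mathcal{M}_{T_h}))$). Conditioning on whether the root is unmatched or matched to one of its $q$ children, and noting that deleting the root together with a matched child leaves copies of $T_{h-1}$ and $T_{h-2}$, I would first determine the maximum matching size $\nu_h$ and then, for $h\ge 2$,
\begin{align*}
N_h &= N_{h-1}^{\,q}+q\,N_{h-2}^{\,q}N_{h-1}^{\,q-1}, \qquad h \text{ even},\\
N_h &= q\,N_{h-2}^{\,q}N_{h-1}^{\,q-1}, \qquad h \text{ odd},
\end{align*}
with base cases $N_0=1$, $N_1=q$. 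The step I expect to be the main obstacle is the \emph{parity dichotomy}: comparing the sizes $q\nu_{h-1}$ (root unmatched) and $1+q\nu_{h-2}+(q-1)\nu_{h-1}$ (root matched), one finds they are \emph{equal} when $h$ is even but the matched configuration wins by \emph{exactly one} when $h$ is odd. Hence at odd heights the root is always covered ($p_h=0$, the first recursion collapsing to the second), while at even heights both families of maximum matchings occur. From $p_{2k}=N_{2k-1}/(N_{2k-1}+qN_{2k-2}^{\,q})$, setting $\rho_{2k}=qN_{2k-2}^{\,q}/N_{2k-1}$ and feeding the two recursions into one another yields the telescoping $\rho_{2k}=1+\rho_{2k-2}$ with $\rho_2=1$, so $\rho_{2k}=k$ and $p_{2k}=\tfrac{1}{k+1}$.

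Passing to logarithms, I would set $x_h=\log N_h$ and $D_h=x_h-q\,x_{h-1}$. The even recursion reads $x_h=q\,x_{h-1}-\log p_h$, giving $D_{2k}=\log(k+1)$, while substituting the even relation into the odd recursion gives $D_h=\log q+\log p_{h-1}$ for odd $h$, i.e. $D_{2k+1}=\log q-\log(k+1)$. Telescoping $x_h=\sum_{j=1}^{h}q^{\,h-j}D_j$ (using $x_0=0$) and dividing by $|V(T_h)|=(q^{h+1}-1)/(q-1)$, only the tail survives in the limit, so
\[
\lim_{h\to\infty}\frac{\log\mm(T_h)}{|V(T_h)|}=\frac{q-1}{q}\sum_{j=1}^{\infty}q^{-j}D_j,
\]
the series converging since the $D_j$ grow at most logarithmically against the geometric weights $q^{-j}$.

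The final step is to evaluate this series, which is pure bookkeeping. Splitting into even and odd $j$, the pure $\log q$ part of the odd terms sums to $\frac{q\log q}{q^2-1}$, and after the prefactor this contributes $\frac{\log q}{q+1}=\frac{\log(d-1)}{d}$. The $\log(k+1)$ parts of the even and odd terms combine, with common factor $1-q^{-1}$, into $\frac{(q-1)^2}{q^2}\sum_{k\ge 1}q^{-2k}\log(k+1)$; reindexing by $\ell=k+1$ turns this into $(d-2)^2\sum_{\ell\ge 2}(d-1)^{-2\ell}\log\ell$. Adding the two pieces reproduces exactly \eqref{canopy}. The only genuinely delicate point is the parity comparison of matching sizes that forces the even/odd split; the recursions, the telescoping and the series evaluation are then routine.
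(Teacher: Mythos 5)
Your proposal is correct, and its computational core coincides with the paper's proof: both reduce to complete $(d-1)$-ary trees, obtain recursions for the number of maximum matchings by conditioning at the root (your $N_{2k}=(k+1)N_{2k-1}^{q}$ and $N_{2k+1}=qN_{2k-1}^{q}N_{2k}^{q-1}$ are exactly the paper's $a_{2i}=(i+1)a_{2i-1}^{k}$ and $a_{2i+1}=ka_{2i-1}^{k}a_{2i}^{k-1}$ with $k=q=d-1$), take logarithms, telescope against geometric weights, and evaluate the same series; your final bookkeeping does reproduce \eqref{canopy}. Where you genuinely differ is in the two supporting steps, and in both you are more elementary and self-contained. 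First, the paper never performs surgery on the balls $G_n$: it observes that $(G_n)$ and $(T_{2i+1})$ have the same Benjamini--Schramm limit (the $d$-canopy tree) and invokes Theorem \ref{theoremFO} to transfer the limit, whereas you remove the central vertex directly and absorb the discrepancy into an $o(|V(T_{n-1})|)$ error; this works, but note that it already requires the parity dichotomy (comparing $d\nu_{n-1}$ with $1+q\nu_{n-2}+(d-1)\nu_{n-1}$), so the induction $\nu_h=q\nu_{h-1}$ for $h$ even and $\nu_h=q\nu_{h-1}+1$ for $h$ odd should be recorded explicitly --- it is the same induction that underpins your counting recursions and the collapse $p_{2k+1}=0$. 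Second, the paper obtains $m_{2i}=1/(i+1)$ and $m_{2i+1}=0$ (Lemma \ref{uprob}) from the zero-temperature fixed-point equation, i.e., from Remark \ref{remarkmo} and \eqref{zerorek}, which ultimately rests on the determinantal machinery behind Theorem \ref{thmzerotdet}; your derivation of $p_{2k}=1/(k+1)$ via the telescoping $\rho_{2k}=1+\rho_{2k-2}$ is purely combinatorial, so your proof of Theorem \ref{thmgomb} depends on nothing proved earlier in the paper --- even Theorem \ref{theoremFO} is used only as a safety net, since your telescoping computes the limit along all $h$, while the paper computes it only along odd $h$. The trade-off is that the paper's argument is shorter because it leans on results it has already established, whereas yours carries the size-parity induction explicitly but could stand alone.
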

Note that by Theorem \ref{theoremFO}, equation \eqref{canopy} also holds for any sequence Benjamini-Schramm converging to the $d$-canopy tree.

\section{Determinantal processes obtained from Boltzmann random matchings on trees}

\subsection{The proof of Proposition \ref{propuniq}}

Let $M_1$ and $M_2$ be two matchings such that $U(M_1)=U(M_2)$. Let $D$ be the symmetric difference of $M_1$ and $M_2$. It is easy to see that $D$ must be a vertex disjoint union of cycles. The graph $G$ is a tree, so it has no cycles. Therefore, $D$ must be empty, that is, $M_1=M_2$.

\subsection{The proof of Theorem \ref{postempdet}}
First we prove the statement for a finite $G$. Recall that the matrix $B_G^z$ was defined in \eqref{BGzdef}, and for $X\subseteq V(G)$, the submatrix of $B_G^z$   determined by the columns corresponding to the elements of $X$ is denoted by $B_G^z[X]$.

We can obtain Theorem \ref{postempdet} by combining Lemma~\ref{altdef} and the following proposition.

\begin{proposition}\label{propBX}
Let $X$ be a subset of $V(G)$ such that $|X|=|S|$. 
\begin{itemize}
\item If there is no matching $M\in\mathbb{M}(G)$ such that $\Delta(M)=X$, then
\[\left|\det B_G^z[X]\right|^2=0.\]
\item Otherwise, let $M$ be the unique matching such that $\Delta(M)=X$. Then
\[\left|\det B_G^z[X]\right|^2=z^{2|S\cap X|}=z^{2(|S|-|M|)}=z^{|S|-|T|}\cdot z^{|V(G)|-2|M|}.\]
\end{itemize}

\end{proposition}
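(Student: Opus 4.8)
<br />

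The plan is to understand the matrix $B_G^z[X] = (zI \mid H_G)[X]$ combinatorially. The rows of $B_G^z$ are indexed by $S$, and the columns by $V(G) = S \cup T$. Given $X \subseteq V(G)$ with $|X| = |S|$, write $X = X_S \cup X_T$ where $X_S = X \cap S$ and $X_T = X \cap T$. The submatrix $B_G^z[X]$ is then an $|S| \times |S|$ matrix whose columns corresponding to $X_S$ come from the $zI$ block (each such column is $z$ times a standard basis vector $\chi_s$ for $s \in X_S$), and whose columns corresponding to $X_T$ come from the $H_G$ block (each such column records the neighbors in $S$ of the given vertex in $X_T$).

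\textbf{Laplace expansion along the $zI$ columns.} First I would expand $\det B_G^z[X]$ along the $|X_S|$ columns coming from the $zI$ block. Each such column is $z\chi_s$ with a single nonzero entry, so the expansion forces us to delete the rows indexed by $X_S$, contributing an overall factor of $z^{|X_S|}$ together with a sign. What remains is the determinant of the square submatrix of $H_G$ with rows indexed by $S \setminus X_S$ and columns indexed by $X_T$; note $|S \setminus X_S| = |S| - |X_S| = |X_T|$, so this is indeed square. This submatrix is the biadjacency matrix of the bipartite graph between $S \setminus X_S$ and $X_T$, and its determinant is (up to sign) the permanent-like expansion counting perfect matchings between these two vertex sets, but since entries are $0/1$ the determinant equals $\pm(\text{number of such perfect matchings})$ only after accounting for signs --- crucially, on a tree a perfect matching between two vertex sets is unique when it exists, so this determinant is $0$ or $\pm 1$.

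\textbf{Translating to $\Delta(M)$.} The key dictionary is the definition $\Delta(M) = U(M) \triangle T = V(M) \triangle S$. I would check that $X = \Delta(M)$ for a matching $M$ exactly when the $X_T$ vertices can be matched into $S \setminus X_S$: indeed, $X_S = V(M) \triangle S$ restricted to $S$ means $X_S$ is the set of $S$-vertices \emph{uncovered} by $M$, while $X_T = V(M) \cap T$ is the set of covered $T$-vertices, and every covered $T$-vertex is matched to a covered $S$-vertex, i.e. to a vertex of $S \setminus X_S$. So a matching $M$ with $\Delta(M) = X$ induces exactly a perfect matching between $X_T$ and $S \setminus X_S$ in $G$. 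Conversely such a perfect matching extends uniquely to a matching $M$ of $G$ with the prescribed $\Delta(M)$ (the remaining $S$-vertices $X_S$ stay uncovered). By Proposition \ref{propuniq} this $M$ is unique. Therefore: if no such $M$ exists the biadjacency submatrix is singular and $\det B_G^z[X] = 0$; if $M$ exists the submatrix determinant is $\pm 1$, giving $|\det B_G^z[X]|^2 = z^{2|X_S|}$. Finally $|X_S| = |S \cap X| = |S| - |M|$ since the uncovered $S$-vertices number $|S| - |M|$, and the arithmetic $z^{2(|S|-|M|)} = z^{|S|-|T|} z^{|V(G)|-2|M|}$ follows from $|V(G)| = |S| + |T|$.

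\textbf{Main obstacle.} The delicate point is justifying that the relevant submatrix of $H_G$ has determinant in $\{0, \pm 1\}$, i.e. that the bipartite graph between $X_T$ and $S \setminus X_S$ admits at most one perfect matching and that a perfect matching contributes $\pm 1$ rather than a larger count. This is exactly where the tree hypothesis enters: on a tree (a forest) any bipartite subgraph has a unique perfect matching when one exists, because two distinct perfect matchings would produce an alternating cycle, contradicting acyclicity --- the same argument underlying Proposition \ref{propuniq}. I expect this uniqueness-via-no-cycles step, together with carefully tracking that the sign ambiguity is harmless after squaring, to be the heart of the proof; the Laplace expansion and the bookkeeping of cardinalities are routine once the combinatorial dictionary between $X$ and $M$ is pinned down.
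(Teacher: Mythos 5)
Your proposal is correct and follows essentially the same route as the paper: the paper reorders rows and columns to exhibit $B_G^z[X]$ as a block upper triangular matrix with blocks $zI_{S\cap X}$ and the biadjacency matrix $H_{G_0}$ of the induced subgraph on $(S\setminus X)\cup(X\cap T)$, which is just a repackaging of your Laplace expansion along the $zI$ columns. The remaining step is identical in both arguments: nonzero Leibniz terms of the biadjacency determinant correspond to matchings $M$ with $\Delta(M)=X$, and Proposition \ref{propuniq} (uniqueness via acyclicity) forces that determinant to lie in $\{0,\pm 1\}$.
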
 
\begin{proof}
We can order rows and columns of $B_G^z[X]$ with without changing the value $|\det B_G^z[X]|^2$. By choosing an appropriate ordering of rows and columns, $B_G^z[X]$ will be an upper triangular block matrix, that is,
\[B_G^z[X]=
\begin{pmatrix}
zI_{S\cap X}& F\\
0 & H_{G_0}
\end{pmatrix}, 
\]
where $I_{S\cap X}$ is the identity matrix, where the rows and columns are indexed by $S\cap X$, $H_{G_0}$ the matrix corresponding to the edge set of the subgraph $G_0$ of $G$ induced by the vertices in symmetric difference of $X$ and $S$%\footnote{See the Introduction for the definition of $H_{G_0}$.}
, the matrix $F$ will not be interesting for us now. 

We have
\[|\det B_G^z [X]|^2=|\det z I_{S\cap X}|^2\cdot|\det H_{G_0}|^2=z^{2|S\cap X|}|\det H_{G_0}|^2.\]

Observe that in the Leibniz formula for the determinant of $H_{G_0}$, the non-zero terms correspond to the perfect matchings in $G_0$, in other words, to the matchings $M$ in $G$ such that $\Delta(M)=X$. From Proposition \ref{propuniq}, there is  at most one such matching. If there is a matching like that, then $|\det H_{G_0}|=1$, otherwise $|\det H_{G_0}|=0$. The statement follows.     
\end{proof}
This concludes the proof for finite trees. Now assume that $G$ is infinite.

We will need the following proposition. The proof is straightforward.

\begin{proposition}\label{propSOT}
Let $P_1, P_2, \dots$ be orthogonal projections in $\ell^2(V)$ such that they converge to an orthogonal projection $P$ in the strong operator topology. Then the determinantal measures corresponding to the projections $P_i$ converge weakly  to the  determinantal measure corresponding to $P$.   
\end{proposition}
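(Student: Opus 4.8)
The plan is to reduce weak convergence to convergence of the \emph{inclusion probabilities}, which are precisely the principal minors appearing in the definition of $\nu_{P_i}$, and then to exploit that these minors are determinants of a fixed finite size depending continuously on finitely many matrix entries.

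First I would regard a subset of $V$ as a point of the compact metrizable space $\{0,1\}^V$ with the product topology, so that $\nu_{P_i}$ and $\nu_P$ are Borel probability measures on this space. To prove weak convergence it suffices to test against a family of continuous functions whose linear span is dense in $C(\{0,1\}^V)$. I would take the family of functions $f_F(X)=\mathbf{1}[F\subseteq X]$, indexed by finite $F\subseteq V$. This family contains the constant function $1$ (take $F=\emptyset$), is closed under multiplication since $f_{F_1}f_{F_2}=f_{F_1\cup F_2}$, and separates points of $\{0,1\}^V$ (if $X\ne Y$, pick $v$ in their symmetric difference and note that $f_{\{v\}}$ distinguishes them). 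Hence by the Stone--Weierstrass theorem its linear span is a dense subalgebra of $C(\{0,1\}^V)$. Since all measures involved are probability measures, a standard $3\varepsilon$ argument then reduces the claim to showing $\int f_F\,d\nu_{P_i}\to\int f_F\,d\nu_P$ for every finite $F$.

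By the defining property of determinantal measures, $\int f_F\,d\nu_{P_i}=\nu_{P_i}(\{X:F\subseteq X\})=\det\big((\langle P_i\chi_u,\chi_v\rangle)_{u,v\in F}\big)$, and similarly for $P$. So it remains to check that these minors converge. Strong operator convergence $P_i\to P$ means $P_i x\to Px$ in $\ell^2(V)$ for every $x$; taking $x=\chi_u$ and pairing with $\chi_v$ gives $\langle P_i\chi_u,\chi_v\rangle\to\langle P\chi_u,\chi_v\rangle$ for all $u,v\in V$. Since $F$ is finite, the matrix $(\langle P_i\chi_u,\chi_v\rangle)_{u,v\in F}$ has fixed size and converges entrywise to $(\langle P\chi_u,\chi_v\rangle)_{u,v\in F}$; as the determinant is a polynomial in the entries, the corresponding minors converge. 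This establishes the required convergence of integrals and completes the proof.

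There is essentially no serious obstacle here, as the statement already advertises; the content is bookkeeping. The one point that deserves care is the passage from convergence of inclusion probabilities to genuine weak convergence on the infinite product space $\{0,1\}^V$, for which I invoke Stone--Weierstrass together with the uniform boundedness of the total masses, rather than arguing directly with cylinder probabilities. It is worth keeping in mind throughout that the minors in play are always finite $|F|\times|F|$ determinants, so no convergence question about infinite matrices ever arises.
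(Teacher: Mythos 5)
Your proof is correct. The paper actually omits the argument entirely (it introduces the proposition with ``The proof is straightforward''), and what you have written is a complete and accurate version of the intended standard argument: weak convergence of probability measures on the compact space $\{0,1\}^V$ reduces, via Stone--Weierstrass and uniform boundedness of total mass, to convergence of the inclusion probabilities $\nu_{P_i}(\{X : F\subseteq X\})$, and these are fixed-size principal minors whose entries $\langle P_i\chi_u,\chi_v\rangle$ converge by strong operator convergence.
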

Pick a vertex $o\in T$. Consider the exhaustion $(G_k)$ of the graph $G$, where $G_k=B_{2k}(G,o)$. For any positive integer $k$, we have that $b_{G_k}^z(s)=b_G^z(s)$ for any $s\in  V(G_k)\cap S$. Therefore, $R_{G_k}^z$ is subspace of $R_G^z$. Moreover, $(R_{G_k}^z)_{k\ge 1}$ is an increasing sequence of subspaces such that their union is dense in $R_G^z$. Thus, it follows that $\lim_{k\to\infty} P_{G_{k}}^z=P_G^z$ in the strong operator topology. By Proposition \ref{propSOT}, the determinantal process corresponding to $R_G^z$ is the weak limit of the determinatal processes corresponding to $R_{G_{k}}^z$. It follows from the already established finite case, that the determinatal processes corresponding to $R_{G_{k}}^z$ is $\Delta(\mathcal{M}_{G_k}^z)$. However, it follows directly from the definitions that $\Delta(\mathcal{M}_{G}^z)$ is  the weak limit of $\Delta(\mathcal{M}_{G_k}^z)$. The statement follows.       

\subsection{The proof of Theorem \ref{thmzerotdet}}
We start by the following lemma.
\begin{lemma}\label{ortho}
For any bipartite tree $(G,S,T)$, the subspaces $R_{G,S,T}^z$ and $R_{G,T,S}^{-z}$ are orthogonal complements of each other. 
\end{lemma}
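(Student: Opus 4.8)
The plan is to prove that $R_{G,S,T}^z$ and $R_{G,T,S}^{-z}$ are orthogonal complements by verifying two things: that the two subspaces are mutually orthogonal, and that together they span all of $\ell^2(V(G))$. Mutual orthogonality is the computational heart; the spanning claim should then follow by a dimension or density argument.

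First I would check orthogonality directly on the spanning vectors. By definition $R_{G,S,T}^z$ is generated by $\left(b_G^z(s)\right)_{s\in S}$ and $R_{G,T,S}^{-z}$ is generated by $\left(b_G^{-z}(t)\right)_{t\in T}$. So it suffices to compute $\langle b_G^z(s), b_G^{-z}(t)\rangle$ for $s\in S$ and $t\in T$ and show it vanishes. Writing out $b_G^z(s)=z\chi_s+\sum_{y\sim s}\chi_y$ and $b_G^{-z}(t)=-z\chi_t+\sum_{x\sim t}\chi_x$, the inner product gets contributions only from coordinates where the supports overlap. Since $s\in S$ and $t\in T$ lie in opposite color classes, the diagonal terms $\chi_s$ and $\chi_t$ never collide. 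The overlap comes from two sources: the term $z\chi_s$ meets the neighbor-sum of $t$ precisely when $s\sim t$ (contributing $+z$), and symmetrically the $-z\chi_t$ meets the neighbor-sum of $s$ when $t\sim s$ (contributing $-z$); the two neighbor-sums themselves share no common vertex because a common neighbor of $s$ and $t$ would have to lie in both color classes. Hence when $s\sim t$ the inner product is $z + (-z) = 0$, and when $s\not\sim t$ it is trivially $0$. This establishes $R_{G,S,T}^z \perp R_{G,T,S}^{-z}$.

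Next I would argue that the two subspaces span everything. For a finite tree this is a clean dimension count: $R_{G,S,T}^z$ is the row space of $B_G^z=(zI\ \ H_G)$, which has rank $|S|$ since the $zI$ block is invertible for $z\neq 0$; symmetrically $R_{G,T,S}^{-z}$ has dimension $|T|$. Since $|S|+|T|=|V(G)|$ and the subspaces are orthogonal, their direct sum has full dimension, so they are orthogonal complements. For the infinite case I would reduce to the finite case by exhaustion, exactly as in the proof of Theorem~\ref{postempdet}: take the exhaustion $G_k=B_{2k}(G,o)$, note that each finite piece gives complementary subspaces, and pass to the limit, using that the increasing unions of the $R_{G_k,S,T}^z$ and $R_{G_k,T,S}^{-z}$ are dense in $R_{G,S,T}^z$ and $R_{G,T,S}^{-z}$ respectively, and that orthogonality is preserved under closure.

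I expect the main obstacle to be the infinite case rather than the inner-product computation, which is essentially a one-line check. Specifically, orthogonality automatically passes to closures, but the spanning property does not a priori survive a limit, so the delicate point is showing that the closed span of $R_{G,S,T}^z \cup R_{G,T,S}^{-z}$ is all of $\ell^2(V(G))$. The natural route is to show that any vector orthogonal to both subspaces must be zero, which amounts to showing that a vector $\xi$ satisfying $\langle \xi, b_G^z(s)\rangle = 0$ for all $s\in S$ and $\langle \xi, b_G^{-z}(t)\rangle = 0$ for all $t\in T$ vanishes identically; this is a system coupling each coordinate to its neighbors through the parameter $z$, and the tree structure together with boundedness should force $\xi=0$, which is the one spot where I would need to be careful with the analysis.
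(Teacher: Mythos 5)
Your orthogonality computation is correct and is exactly the paper's first step (the paper leaves it as a one-line remark), and your finite-tree dimension count via the block matrix $(zI\ \ H_G)$ is a perfectly valid alternative for the finite case. The genuine gap is the infinite case, which is the real content of the lemma, and both routes you sketch for it break down. The exhaustion argument fails as stated: with $G_k=B_{2k}(G,o)$ and $o\in T$, the $S$-side vectors satisfy $b^z_{G_k}(s)=b^z_G(s)$, so those subspaces are nested with union dense in $R^z_{G,S,T}$; but every boundary vertex $t$ at distance exactly $2k$ from $o$ has neighbors outside the ball, so $b^{-z}_{G_k}(t)$ is a proper truncation of $b^{-z}_G(t)$. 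Consequently $R^{-z}_{G_k,T,S}$ need not be contained in $R^{-z}_{G,T,S}$ (nor in $R^{-z}_{G_{k+1},T,S}$), and its union is not dense in $R^{-z}_{G,T,S}$. No single exhaustion makes both families of defining vectors compatible with the infinite graph at once --- this is precisely why the paper's proof of Theorem \ref{postempdet} roots the exhaustion at a vertex $o\in T$ and uses only the $S$-side vectors.

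Your fallback reduction --- show that $\langle\xi,b^z_G(s)\rangle=0$ for all $s\in S$ and $\langle\xi,b^{-z}_G(t)\rangle=0$ for all $t\in T$ forces $\xi=0$ --- is the right statement, but ``the tree structure together with boundedness should force $\xi=0$'' is not an argument; this step is the entire difficulty, and it is where the paper's one trick lives. The paper complexifies: define $w\in\ell^2(V(G))$ by $w(x)=\xi(x)$ for $x\in S$ and $w(x)=i\xi(x)$ for $x\in T$. The two orthogonality relations read $z\xi(s)+\sum_{y\sim s}\xi(y)=0$ and $-z\xi(t)+\sum_{x\sim t}\xi(x)=0$, and together they say exactly that $Aw=-ziw$, where $A$ is the adjacency operator. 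Since $A$ is bounded (by the degree bound) and self-adjoint, it has no nonreal eigenvalues, so $w=0$ and hence $\xi=0$. Note that this needs $z\neq 0$ (as does your $zI$-invertibility step), and indeed for $z=0$ the lemma is false, since the joint orthogonal complement is then $\ker A$, which can be nonzero; note also that the tree structure plays no role here --- only bipartiteness and the uniform degree bound are used. Without this complexification idea, or some substitute for it, your proposal does not establish the lemma for infinite trees.
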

\begin{proof}
Note that $b_G^z(s)$ is orthogonal to $b_G^{-z}(t)$ for any $s\in S$ and $t\in T$. This imply that the subspaces $R_{G,S,T}^z$ and  $R_{G,T,S}^{-z}$ are orthogonal. To finish the proof, we need to prove that if $v\in \left(R_{G,S,T}^z+R_{G,T,S}^{-z}\right)^\bot$, then $v=0$. Given a $v\in \left(R_{G,S,T}^z+R_{G,T,S}^{-z}\right)^\bot$, let us define the vector $w\in \ell^2(V(G))$ by
\[w(x)=\begin{cases}
v(x)&\text{if }x\in S,\\
v(x)i&\text{if }x\in T,
\end{cases}
\] 
where $i$ is the imaginary unit. Let $A$ be the adjacency operator of $G$. It is easy to check that $Aw=-z i w$. Since $A$ is self-adjoint, all the eigenvalues of $A$ are real, so this is only possible if $w=v=0$. 
\end{proof}

%Now we turn to the proof of Theorem \ref{thmzerotdet}
\begin{lemma}\label{strongconv}
Let $(G,S,T)$ be a bipartite tree, then for any $v\in \ell^2(T)\subseteq \ell^2(V(G))$, we have
\[\lim_{z\to 0}P_{G,S,T}^z v=P_{G,S,T}^0 v\]
in norm.
\end{lemma}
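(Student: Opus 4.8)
The plan is to compute $P_{G,S,T}^z v$ explicitly for $v\in\ell^2(T)$ by recognizing the projection as the solution of a Tikhonov-regularized least squares problem, and then pass to the limit $z\to 0$ using the spectral theorem. First I would set up notation on $\ell^2(V(G))=\ell^2(S)\oplus\ell^2(T)$. Let $H\colon \ell^2(T)\to\ell^2(S)$ be the bounded operator with $(Hv)_s=\sum_{t\sim s}v_t$ (bounded since the degrees are at most $D$), so that $H$ is the operator whose matrix is $H_G$, and its adjoint $H^*$ satisfies $b_G^0(s)=H^*\chi_s\in\ell^2(T)$. Then $R_{G,S,T}^0=\overline{\mathrm{ran}}(H^*)=(\ker H)^\perp\subseteq\ell^2(T)$, while for $z\neq 0$ the map $w\mapsto(zw,H^*w)$ is bounded below by $|z|$, hence has closed range, and one checks that $R_{G,S,T}^z=\{(zw,H^*w):w\in\ell^2(S)\}$.

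With this description, projecting $v=(0,v_T)$ onto $R_{G,S,T}^z$ amounts to minimizing $\|(0,v_T)-(zw,H^*w)\|^2=z^2\|w\|^2+\|v_T-H^*w\|^2$ over $w\in\ell^2(S)$. This is a strictly convex coercive functional for $z\neq 0$, so the minimizer is the unique solution of the normal equation $(z^2I+HH^*)w=Hv_T$, namely $w_z=(z^2I+HH^*)^{-1}Hv_T$, and $P_{G,S,T}^z v=(zw_z,\,H^*w_z)$. Using the intertwining identities $H^*(z^2I+HH^*)^{-1}=(z^2I+H^*H)^{-1}H^*$ and $(z^2I+HH^*)^{-1}H=H(z^2I+H^*H)^{-1}$, and writing $B:=H^*H\ge 0$ on $\ell^2(T)$, I get the clean expressions: the $T$-part of $P_{G,S,T}^z v$ equals $B(z^2I+B)^{-1}v_T$ and the $S$-part equals $zH(z^2I+B)^{-1}v_T$.

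Now I would take $z\to 0$ via the functional calculus for the self-adjoint operator $B$, with spectral measure $\mu=\langle E(\cdot)v_T,v_T\rangle$, a finite measure of total mass $\|v_T\|^2$ supported on $[0,\|B\|]$. The $T$-part corresponds to the symbol $\lambda/(z^2+\lambda)$, which is bounded by $1$ and converges pointwise to $\mathbf{1}_{\{\lambda>0\}}$ as $z\to 0$; dominated convergence therefore gives $B(z^2I+B)^{-1}v_T\to \mathbf{1}_{\{\lambda>0\}}(B)v_T=P_{(\ker B)^\perp}v_T$. Since $(\ker B)^\perp=(\ker H)^\perp=\overline{\mathrm{ran}}(H^*)=R_{G,S,T}^0$, this limit is exactly $P_{G,S,T}^0 v$. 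For the $S$-part I would estimate $\|zH(z^2I+B)^{-1}v_T\|^2=\int \frac{z^2\lambda}{(z^2+\lambda)^2}\,d\mu(\lambda)$; the integrand is bounded by $1/4$ (using $z^2+\lambda\ge 2|z|\sqrt\lambda$) and tends to $0$ pointwise, so dominated convergence forces the $S$-part to $0$. Combining the two pieces yields $P_{G,S,T}^z v\to(0,P_{G,S,T}^0 v_T)=P_{G,S,T}^0 v$ in norm.

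The main obstacle is in the infinite-dimensional bookkeeping rather than in any single hard estimate: one must verify that $R_{G,S,T}^z$ is genuinely the full range $\{(zw,H^*w)\}$ (so that the least-squares minimization runs over all of $\ell^2(S)$), justify the normal equation and the intertwining identities for bounded operators between distinct Hilbert spaces, and apply dominated convergence at the level of the spectral measure. It is worth noting that the hypothesis $v\in\ell^2(T)$ is essential and not merely convenient: repeating the computation for $v\in\ell^2(S)$, for instance via $P_{G,S,T}^z=I-P_{G,T,S}^{-z}$ from Lemma \ref{ortho}, produces the limit $P_{\ker H^*}v$, which need not vanish even though $P_{G,S,T}^0 v=0$. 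Thus $P_{G,S,T}^z$ does \emph{not} converge to $P_{G,S,T}^0$ in the strong operator topology in general, and the lemma correctly singles out the color class $T$ on which convergence does hold.
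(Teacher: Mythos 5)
Your proof is correct, but it takes a genuinely different route from the paper's. The paper argues variationally and elementarily: since $P_T b^z_G(s)=b^0_G(s)$ (where $P_T$ is the coordinate projection onto $\ell^2(T)$), the image of $R^z_{G,S,T}$ under $P_T$ lies in $R^0_{G,S,T}$, which gives the key inequality $\|v-P^z_{G,S,T}v\|\ge\|v-P^0_{G,S,T}v\|$ for $v\in\ell^2(T)$; one then approximates $P^0_{G,S,T}v$ by a finite linear combination $\sum_{f\in F}\alpha_f b^0_G(f)$, perturbs it to $\sum_{f\in F}\alpha_f b^z_G(f)$ (an error of order $|z|$ because $F$ is finite), and combines the orthogonality of $P^z_{G,S,T}v-\sum_{f\in F}\alpha_f b^z_G(f)$ to $v-P^z_{G,S,T}v$ with the key inequality to bound $\|P^z_{G,S,T}v-P^0_{G,S,T}v\|$ explicitly in terms of $\varepsilon$. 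Your argument instead identifies $R^z_{G,S,T}$ as the closed range $\{(zw,H^*w):w\in\ell^2(S)\}$, solves the Tikhonov least-squares problem via the normal equation, and passes to the limit by spectral calculus for $B=H^*H$ with dominated convergence. The bookkeeping steps you flag do all go through: the range is closed because the map is bounded below by $|z|$, the intertwining identities hold for bounded operators between different Hilbert spaces, $\ker(H^*H)=\ker H$, and $\overline{\mathrm{ran}}(H^*)=R^0_{G,S,T}$ by density of finitely supported vectors. What your approach buys is an explicit resolvent formula (the $T$-part is $B(z^2I+B)^{-1}v_T$, the $S$-part is $zH(z^2I+B)^{-1}v_T$) and, as your closing remark shows, the full strong-operator limit on all of $\ell^2(V(G))$ in one stroke: you recover the paper's Lemma \ref{strongconv2}, limit $P_1\oplus(I-P_2)$, including the fact (flagged as surprising in the paper's footnote) that the limit is not $P^0_{G,S,T}$, whereas the paper derives that separately from Lemma \ref{ortho} and Lemma \ref{strongconv}. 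What the paper's approach buys is elementarity: no functional calculus, no closed-range or normal-equation verification, only Hilbert-space geometry and a finite-support perturbation, which is precisely the part of your write-up that demands the most careful justification.
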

\begin{proof}
Let $P_T$ be the orthogonal projection from $\ell^2(V)$ to $\ell^2(T)$. Note that the image of $R_{G,S,T}^z$ under the projection $P_T$ is contained in  $R_{G,S,T}^0$. Therefore,
\begin{align}\label{ineq222}\|v-P_{G,S,T}^z v\|^2&=\min_{p\in R_{G,S,T}^z} \|v-p\|^2\ge \min_{p\in R_{G,S,T}^z} \|P_T(v-p)\|^2\nonumber\\ &\ge \min_{p\in R_{G,S,T}^0} \|v-p\|^2=\|v-P_{G,S,T}^0 v\|^2.
\end{align}
  Pick any $\varepsilon>0$, then there is a finite set $F\subseteq S$, and coefficients $(\alpha_f)_{f\in F}$, such that for $\|P_{G,S,T}^0 v-\sum_{f\in F} \alpha_f b_G^0(f)\|<\varepsilon$. It is straightforward to see that 
\[\|P_{G,S,T}^0 v-\sum_{f\in F} \alpha_f b_G^z(f)\|<2\varepsilon\]
for any small enough $z$. Then for small enough $z$, we have 

\begin{align}\label{ineq1}
\|P_{G,S,T}^z v-P_{G,S,T}^0 v\|&\le \|P_{G,S,T}^z v-\sum_{f\in F} \alpha_f b_G^z(f)\|+\|\sum_{f\in F} \alpha_f b_G^z(f)-P_{G,S,T}^0 v\|\\&\le  \|P_{G,S,T}^z v-\sum_{f\in F} \alpha_f b_G^z(f)\|+2\varepsilon.\nonumber
\end{align}
Since $P_{G,S,T}^z v-\sum_{f\in F} \alpha_f b_G^z(f)$ is orthogonal to $v-P_{G,S,T}^z v$, for small enough $z$, we have
\begin{multline*}
\|P_{G,S,T}^z v-\sum_{f\in F} \alpha_f b_G^z(f)\|^2=\| v-\sum_{f\in F} \alpha_f b_G^z(f)\|^2- \|v-P_{G,S,T}^z v\|^2\\=
\| v-\sum_{f\in F} \alpha_f b_G^z(f)\|^2-\|v-P_{G,S,T}^0 v\|^2+\|v-P_{G,S,T}^0 v\|^2- \|v-P_{G,S,T}^z v\|^2\\=
\langle 2v-\sum_{f\in F} \alpha_f b_G^z(f)-P_{G,S,T}^0 v,P_{G,S,T}^0v -\sum_{f\in F} \alpha_f b_G^z(f) \rangle +\|v-P_{G,S,T}^0 v\|^2- \|v-P_{G,S,T}^z v\|^2\\\le
\| 2v-\sum_{f\in F} \alpha_f b_G^z(f)-P_{G,S,T}^0 v\|\cdot \|P_{G,S,T}^0v -\sum_{f\in F} \alpha_f b_G^z(f) \|\\\le
(2\|v\|+2\|P_{G,S,T}^0 v\|+2\varepsilon)2\varepsilon,
\end{multline*}
where in the second to last inequality we used \eqref{ineq222}.
Inserting this into \eqref{ineq1} we obtain that \[\|P_{G,S,T}^z v-P_{G,S,T}^0 v\|\le 2\varepsilon + \sqrt{(2\|v\|+2\|P_{G,S,T}^0 v\|+2\varepsilon)2\varepsilon},\]
for any small enough $z$. Tending to zero with $\varepsilon$, we get that $\lim_{z\to 0} P_{G,S,T}^z v=P_{G,S,T}^0 v$ in norm.
\end{proof}

We need tow simple facts about determinantal processes.

\begin{proposition}(\cite[
Corollary 5.3.]{lyons})\label{propdet21}
Let $E$ be any countable set,  let $P$ be an orthogonal projection on $\ell^2(E)$, and let $B$ be the determinantal process corresponding to $P$. Then $E\backslash B$ is a  determinantal process corresponding to $I-P$. 
\end{proposition}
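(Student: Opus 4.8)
The plan is to characterize the law of $E\setminus B$ through its inclusion probabilities and match them with those of the determinantal measure for $I-P$. Since $P$ is an orthogonal projection, so is $I-P$, hence the determinantal measure $\nu_{I-P}$ exists; moreover, by the uniqueness asserted in the preliminaries (valid for countable $E$), a probability measure on subsets of $E$ coincides with $\nu_{I-P}$ as soon as its inclusion probabilities agree with $\det\big((I-P)_{ij}\big)_{i,j\in F}$ for every finite $F\subseteq E$. Thus it suffices to show, for every finite $F\subseteq E$,
\[
\mathbb{P}(F\subseteq E\setminus B)=\det\big((I-P)_{ij}\big)_{i,j\in F}.
\]

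First I would rewrite the left-hand side as the hole probability $\mathbb{P}(B\cap F=\emptyset)$ and expand it by inclusion--exclusion over the finitely many events $\{i\in B\}$, $i\in F$. Writing $P|_G=(p_{ij})_{i,j\in G}$, the inclusion property of $B$ gives $\mathbb{P}(G\subseteq B)=\det P|_G$ for each $G\subseteq F$, so
\[
\mathbb{P}(B\cap F=\emptyset)=\sum_{G\subseteq F}(-1)^{|G|}\,\mathbb{P}(G\subseteq B)=\sum_{G\subseteq F}(-1)^{|G|}\det P|_G .
\]
Because $F$ is finite this is a genuine finite inclusion--exclusion identity involving only cylinder events, so nothing changes between the finite and the countably infinite case.

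The remaining step, which is the real content, is the purely linear-algebraic principal-minor identity
\[
\det\big(I_F-P|_F\big)=\sum_{G\subseteq F}(-1)^{|G|}\det P|_G ,
\]
with the convention $\det P|_\emptyset=1$. I would prove this by multilinearity of the determinant in the rows of $I_F-P|_F$: writing its $i$-th row as the difference of $e_i$ and the $i$-th row of $P|_F$ and expanding, each subset $G\subseteq F$ contributes the determinant of the matrix whose rows in $G$ are taken from $-P|_F$ and whose rows outside $G$ are standard basis vectors. A simultaneous reordering putting $G$ first makes this matrix block upper triangular with an identity block in the lower-right corner, so its determinant equals $\det\big((-P)|_G\big)=(-1)^{|G|}\det P|_G$ (the simultaneous row-and-column permutation contributes $\operatorname{sgn}(\sigma)^2=1$). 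Combining this with the inclusion--exclusion computation yields $\mathbb{P}(F\subseteq E\setminus B)=\det\big(I_F-P|_F\big)=\det\big((I-P)|_F\big)$, and the uniqueness of the determinantal measure finishes the proof.

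I do not expect a serious obstacle here: the argument is essentially bookkeeping. The only two points requiring a little care are invoking the uniqueness of the determinantal measure on a countable ground set (so that matching inclusion probabilities suffices, rather than re-deriving the full distribution of $E\setminus B$) and verifying the sign in the minor expansion, both of which are routine.
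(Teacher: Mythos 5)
Your proof is correct. Note, however, that the paper does not prove this proposition at all: it is quoted verbatim from Lyons (Corollary 5.3 of the cited work), where it is obtained as part of a general theory of determinantal measures developed via exterior algebra. What you have done instead is give a short, self-contained argument: reduce the claim, via the uniqueness clause in the definition of a determinantal measure, to matching inclusion probabilities; compute the hole probability $\mathbb{P}(B\cap F=\emptyset)$ by finite inclusion--exclusion using the determinantal property of $B$; and close the gap with the principal-minor identity
\[
\det\bigl(I_F-P|_F\bigr)=\sum_{G\subseteq F}(-1)^{|G|}\det P|_G ,
\]
proved by multilinearity in the rows (your sign bookkeeping is right, and the identity checks out on small cases). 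This buys transparency and independence from external machinery, and it works verbatim for countable $E$ because only cylinder events are ever involved; what the citation to Lyons buys is economy and access to stronger statements (complementation there sits inside a framework handling general positive contractions, conditioning, and stochastic comparison, none of which your argument addresses, but none of which the paper needs here). The one point worth making explicit if you wrote this up: the uniqueness you invoke -- that inclusion probabilities over finite sets determine a probability measure on subsets of a countable set -- is exactly the uniqueness asserted in the paper's preliminaries, so your appeal to it is legitimate rather than circular.
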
 
\begin{proposition}\label{propdet22}
Let $E_1$ and $E_2$ be two disjoint countable sets, let $P_1$ and $P_2$ be orthogonal projections on $\ell^2(E_1)$ and $\ell^2(E_2)$, respectively. Let $B_1$ and $B_2$ be independent determinantal processes corresponding to $P_1$ and $P_2$, respectively. Then $B_1\cup B_2$ is a determinantal process corresponding to $P_1\oplus P_2$.   
\end{proposition}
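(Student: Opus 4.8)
The plan is to prove Proposition \ref{propdet22} directly from the defining property of determinantal measures, namely that for any finite subset $F$ of the ground set the inclusion probability $\nu(\{X\mid F\subseteq X\})$ equals the corresponding principal minor of the projection. Let $E=E_1\cup E_2$ and let $P=P_1\oplus P_2$ be the block-diagonal projection on $\ell^2(E)=\ell^2(E_1)\oplus\ell^2(E_2)$. Write $B=B_1\cup B_2$; since $E_1$ and $E_2$ are disjoint and $B_i\subseteq E_i$, this is an honest disjoint union. I would verify that $B$ has the inclusion probabilities prescribed by $\nu_{P}$, which by uniqueness of the determinantal measure forces $B$ to be the determinantal process for $P$.

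Concretely, fix a finite $F\subseteq E$ and split it as $F=F_1\sqcup F_2$ with $F_i=F\cap E_i$. The event $F\subseteq B$ is the intersection of the events $F_1\subseteq B_1$ and $F_2\subseteq B_2$, because an element of $E_i$ lies in $B$ if and only if it lies in $B_i$. Using independence of $B_1$ and $B_2$, I would compute
\begin{align*}
\mathbb{P}(F\subseteq B)&=\mathbb{P}(F_1\subseteq B_1)\cdot\mathbb{P}(F_2\subseteq B_2)\\
&=\det\big((P_1)_{ij}\big)_{i,j\in F_1}\cdot\det\big((P_2)_{ij}\big)_{i,j\in F_2},
\end{align*}
where the two factors come from the defining property of $\nu_{P_1}$ and $\nu_{P_2}$. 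The remaining task is to recognize this product as a single principal minor of $P$.

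The key observation is that in the standard basis the matrix of $P=P_1\oplus P_2$ is block diagonal: its $(i,j)$ entry vanishes whenever $i$ and $j$ lie in different blocks. Hence the principal submatrix of $P$ indexed by $F=F_1\sqcup F_2$ is itself block diagonal, with diagonal blocks the principal submatrices of $P_1$ on $F_1$ and of $P_2$ on $F_2$. The determinant of a block-diagonal matrix is the product of the determinants of its blocks, so
\[
\det\big(P_{ij}\big)_{i,j\in F}=\det\big((P_1)_{ij}\big)_{i,j\in F_1}\cdot\det\big((P_2)_{ij}\big)_{i,j\in F_2}=\mathbb{P}(F\subseteq B).
\]
Since $F$ was arbitrary, $B$ satisfies the inclusion-probability characterization of $\nu_{P}$, and uniqueness gives the claim.

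I do not expect any genuine obstacle here; the statement is essentially a bookkeeping consequence of block-diagonality together with independence. The only point requiring a word of care is that $P_1\oplus P_2$ is indeed an orthogonal projection on $\ell^2(E)$ (it is self-adjoint and idempotent because each $P_i$ is, and the blocks do not interact), so that the determinantal measure $\nu_{P}$ is well defined and the uniqueness statement applies. In the countably infinite setting one should also note that matching all finite-$F$ inclusion probabilities is exactly the hypothesis under which the determinantal measure is unique, so no additional limiting argument is needed.
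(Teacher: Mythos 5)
Your proof is correct. The paper states Proposition \ref{propdet22} without any proof, treating it as a simple fact; your verification --- splitting a finite $F$ as $F_1\sqcup F_2$, using independence to factor the inclusion probability, recognizing the product as the principal minor of the block-diagonal projection $P_1\oplus P_2$, and invoking uniqueness of the measure determined by inclusion probabilities --- is precisely the routine argument the paper leaves implicit, with the additional care (correctly noted) that $P_1\oplus P_2$ is indeed an orthogonal projection.
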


Let $A$ be the adjacency operator of $G$. Let $P_1$ be the ortogonal projection from $\ell^2(S)$ to $\ker A\cap \ell^2(S)$, and let  $P_2$ be the ortogonal projection from $\ell^2(T)$ to $\ker A\cap \ell^2(T)$. It is easy to see that $\bar{P}_G=P_1\oplus P_2$. 

The next lemma is an easy consequence of Lemma \ref{strongconv}.
\begin{lemma}\label{strongconv2}
For any bipartite tree $(G,S,T)$, the projections $P_{G,S,T}^z$ converge to $P_1\oplus (I- P_2)$ in the strong operator topology as $z$ tends to $0$.\footnote{At first sight it may be surprising that the limit is not $P_{G,S,T}^0$.} 
\end{lemma}
\begin{proof}
First, consider a $v\in \ell^2(T)\subseteq \ell^2(V)$. Then Lemma \ref{strongconv} gives that us 
\[\lim_{z\to 0} P_{G,S,T}^zv=P_{G,S,T}^0 v=(I-P_2)v.\]
Now consider a $v\in \ell^2(S)\subseteq \ell^2(V)$, then we have
\[\lim_{z\to 0} P_{G,S,T}^z v=\lim_{z\to 0} (I- P_{G,T,S}^{-z}) v=(I-P_{G,T,S}^0) v=P_1v,\]  
where the first equality follows from Lemma \ref{ortho}, the second one follows from Lemma \ref{strongconv}.
% Consider the graph $G^T=(S^T,T^T,E^T)$ defined at the end of Subsection \ref{finite}, and identify $\ell^(V)$ with $\ell^(V^T)$ in the natural way. Then $v\in \ell^2(S^T)$. So the previous paragraph gives us that 
% $\lim_{z\to 0} P_{G^T}^z v=P_{G^T}^0 v$. Observe that $R_{G}^z$ is the orthogonal complement of $R_{G^T}^{-z}$. For $z\neq 0$ this is stated in Lemma \ref{ortho}, for $z=0$, it is clear from the definitions. Therefore,
% \[\lim_{z\to 0} P_{G}^z v=\lim_{z\to 0} (v-P_{G^T}^{-z} v)=v-R_{G^T}^0 v=R_G^0 v\]
% in norm. 

% Since any $v\in \ell^2(V)$ can be written as $v=v_1+v_2$, where $v_1\in \ell^2(S)$ and $v_2\in \ell^2(T)$, the  lemma follows. 
Since $\ell^2(V)=\ell^2(S)\oplus \ell^2(T)$, the statement follows.
\end{proof}

Now we are ready to prove Theorem \ref{thmzerotdet}.

Let $B_S$ and $B_T$ be independent determinantal processes corresponding to $P_1$ and $P_2$, respectively. Combining Proposition \ref{propdet21} and Proposition \ref{propdet22}, we get that $B'=B_S\cup(T\backslash B_T)$ is a determinantal process corresponding to $P_1\oplus(I-P_2)$. By Lemma \ref{strongconv2} and Proposition \ref{propSOT}, we see that $\Delta(\mathcal{M}_G^z)$ converges in law to $B'$ as $z\to 0$. Comparing the definition of $\Delta(\mathcal{M}_G^z)$ and $U(\mathcal{M}_G^z)$, we see that $U(\mathcal{M}_G^z)$ converges in law to $B_S\cup B_T$. Thus, by definition the law $B_S\cup B_T$ must be the same as that of $\mathcal{U}_G$. Since $B_S\cup B_T$ is the determinantal process corresponding to $\bar{P}_G=P_1\oplus P_2$ by Proposition \ref{propdet22}, the theorem follows. 

\section{The local structure of the projection matrices $\bar{P}_G$}

\subsection{The proof of Theorem \ref{thmlocalgen}}

For a graph $G$, let $\Pi_G$ be the orthogonal projection to the closed subspace of $\ell^2(V(G))$ generated by the vectors $(b^0_G(v))_{v\in V(G)}$. Given a rooted graph $(G,o)$ and a positive integer $R$, let $\Pi_{G,o,R}$ be the the orthogonal projection to the subspace of $\ell^2(V(G))$ generated by the vectors $(b^0_G(v))_{v\in V(B_R(G,o))}$. 

Let us consider a finite graph $G$, let $\rang(G)$ be rank of its adjacency matrix. It is a simple fact from linear algebra that
\[\rang(G)=\sum_{o\in V(G)}\langle \Pi_G\chi_o,\chi_o\rangle.\]
Thus, it is reasonable the expect that for large enough $R$, the quantity
\[\rang_R(G)=\sum_{o\in V(G)}\langle \Pi_{G,o,R}\chi_o,\chi_o\rangle\]
gives us a good approximation of $\rang(G)$. The next lemma shows that this is indeed true.

\begin{lemma}\label{rankapprox}
%Let $G$ be any finite graph, $o\in G$, then $\langle \Pi_{G,o,R}\chi_0,\chi_o\rangle
For any $D<\infty$ and $\varepsilon>0$, there is an $R$ such that for any finite graph $G$ with maximum degree at most $D$, we have
\[\rang_R(G)\le \rang(G) \le \rang_R(G)+\varepsilon |V(G)|.\]
\end{lemma}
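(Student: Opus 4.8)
The plan is to treat the two inequalities separately, with essentially all the difficulty concentrated in the upper bound on $\rang(G)$. Throughout write $f_R(o)=\langle\Pi_{G,o,R}\chi_o,\chi_o\rangle$ and $f(o)=\langle\Pi_G\chi_o,\chi_o\rangle$, so that $\rang_R(G)=\sum_{o}f_R(o)$ and $\rang(G)=\sum_o f(o)$. The lower bound $\rang_R(G)\le\rang(G)$ is immediate: the generating vectors $(b^0_G(v))_{v\in V(B_R(G,o))}$ form a subfamily of $(b^0_G(v))_{v\in V(G)}$, so $\range\Pi_{G,o,R}\subseteq\range\Pi_G$ and hence $\Pi_{G,o,R}\preceq\Pi_G$ in the Loewner order; comparing diagonal entries gives $f_R(o)\le f(o)$ for every $o$, and summing yields the claim. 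It remains to produce, uniformly in $G$, a radius $R$ with $\sum_o\bigl(f(o)-f_R(o)\bigr)\le\varepsilon|V(G)|$.

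I would establish this by a compactness argument on the space $\mathcal{G}_D$ of rooted graphs, turning ``there exists a uniform $R$'' into a statement about Benjamini--Schramm limits. First note two features of the local quantities. For fixed $R$, the number $f_R(o)$ depends only on the isomorphism type of the ball $B_{R+1}(G,o)$, since the Gram matrix of $\{\chi_o\}\cup\{b^0_G(v):v\in V(B_R(G,o))\}$ has entries determined by that ball; hence $o\mapsto f_R(o)$ is a bounded continuous function on $\mathcal{G}_D$ and $\tfrac1{|V(G)|}\rang_R(G)=\mathbb{E}_{o}f_R(o)$ is continuous under Benjamini--Schramm convergence. Moreover $f_R(o)$ is nondecreasing in $R$, and for a fixed (possibly infinite) connected graph the truncated projections $\Pi_{G,o,R}$ increase strongly to $\Pi_G$, because the spans $\range\Pi_{G,o,R}$ increase to a subspace with dense closure $\range\Pi_G$; thus $f_R(o)\uparrow f(o)$ pointwise.

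Suppose the upper bound fails for some $\varepsilon_0>0$. Then for every $R$ there is a finite graph $G_R$ with maximum degree $\le D$ and $\tfrac1{|V(G_R)|}\bigl(\rang(G_R)-\rang_R(G_R)\bigr)\ge\varepsilon_0$. Passing to a subsequence, the unimodular measures $U(G_{R_j})$ converge to a random rooted graph $(G_\infty,o)$, with $R_j\to\infty$. For fixed $R_0$ and all large $j$ we have $\rang_{R_j}(G_{R_j})\ge\rang_{R_0}(G_{R_j})$ by monotonicity, and $\tfrac1{|V|}\rang_{R_0}(G_{R_j})\to\mathbb{E}\,f_{R_0}(o)$ by continuity of the local functional; letting $R_0\to\infty$ and using the monotone convergence $f_{R_0}(o)\uparrow f(o)$ in the limit object gives $\liminf_j\tfrac1{|V|}\rang_{R_j}(G_{R_j})\ge\mathbb{E}\,f(o)$. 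On the other hand $\rang_{R_j}(G_{R_j})\le\rang(G_{R_j})$, and by the continuity of the normalized rank under Benjamini--Schramm convergence $\tfrac1{|V|}\rang(G_{R_j})\to\mathbb{E}\,f(o)$, the limiting rank density being exactly $\mathbb{E}\langle\Pi_{G_\infty}\chi_o,\chi_o\rangle$. Combining these, $\tfrac1{|V|}\bigl(\rang(G_{R_j})-\rang_{R_j}(G_{R_j})\bigr)\to0$, contradicting the lower bound $\varepsilon_0$.

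The genuinely hard input is the last one: that $\tfrac1{|V|}\rang(G_n)$ is continuous along Benjamini--Schramm convergent sequences, equivalently that the normalized nullity is lower semicontinuous. Weak convergence of the spectral measures only gives the easy half (an upper bound on the limiting nullity density, mirroring $\rang_R\le\rang$); the reverse, that small but nonzero eigenvalues of the finite graphs cannot inflate the rank beyond the kernel of the limiting operator, is precisely the content of the rank-continuity theorem \cite{abert2013benjamini}, and that is where the real work lies. If a self-contained treatment is preferred, I would instead extract the atom at $0$ through the logarithmic determinant $\tfrac1{|V|}\log\det(A^2+\eta^2 I)=\int\log(x^2+\eta^2)\,d\mu_G(x)$, which is Benjamini--Schramm continuous for each fixed $\eta>0$ since $\log(x^2+\eta^2)$ is bounded and continuous on the spectrum $[-D,D]$; the behavior as $\eta\to0$ would then have to be controlled uniformly in $G$ by a Lück-approximation-type estimate preventing the nonzero spectrum from accumulating at $0$ too quickly, and I expect this uniform near-zero control to be the main obstacle.
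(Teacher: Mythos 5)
Your proof is correct and follows essentially the same route as the paper's: the same image-containment argument for the lower bound, and for the upper bound the same compactness/contradiction argument along a Benjamini--Schramm convergent subsequence, combining the rank-continuity theorem of Ab\'ert, Thom and Vir\'ag with the monotone strong convergence $\Pi_{G,o,R}\to\Pi_G$ (plus monotone/dominated convergence) and the local continuity of $\rang_R$, finished off by the monotonicity of $\rang_r$ in $r$. The only differences are cosmetic: you organize the estimates via $\liminf$/$\limsup$ rather than the paper's explicit $\varepsilon/3$ bounds, and your closing speculation about replacing the rank-continuity input by a log-determinant argument goes beyond what the paper attempts (it simply cites that theorem).
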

\begin{proof}
The inequality $\rang_R(G)\le \rang(G)$ follows from the fact that for any $o\in V(G)$, the image of $\Pi_{G,o,R}$ is contained in the image of $\Pi_G$.

We prove the rest of the lemma by contradiction. Assume that for some $D<\infty$ and $\varepsilon>0$, we have a sequence of finite graphs $(H_R)_{R\in \mathbb{N}}$ such that $H_R$ has maximum degree at most $D$ and $\rang(H_R)>\rang_R(H_R)+\varepsilon |V(H_R)|$. From the compactness of the space $\mathcal{G}_D$, it follows that there is a subsequence $R_1<R_2<\dots$ of the positive integers such that $(H_{R_n})$ is Benjamini-Schramm convergent. Let $G_n=H_{R_n}$, and let the random rooted graph $(G,o)$ be the limit of the sequence $(G_n)$. It follows from the result of Ab\'ert, Thom and Vir\'ag \cite{abert2013benjamini} that 
\[\lim_{n\to\infty} \frac{\rang(G_n)}{|V(G)|}=\mathbb{E}\langle \Pi_G \chi_o,\chi_o\rangle.\]

In particular, for any large enough $n$, we have
\[\left|\frac{\rang(G_n)}{|V(G)|}-\mathbb{E}\langle \Pi_G \chi_o,\chi_o\rangle \right|<\frac{\varepsilon}3.\]

Note that for any fixed rooted graph $(G',o')$, the projections $\Pi_{G',o',R}$ converge to $\Pi_{G'}$ in the strong operator topology. Combining this with the dominated convergence theorem, we obtain that
\[\lim_{R\to \infty} \mathbb{E} \langle \Pi_{G,o,R} \chi_o,\chi_o\rangle=\mathbb{E}\lim_{R\to \infty}  \langle \Pi_{G,o,R} \chi_o,\chi_o\rangle=\mathbb{E} \langle \Pi_G \chi_o,\chi_o\rangle.\]
Thus, for a large enough $R$, 
\[\left|\mathbb{E} \langle \Pi_G \chi_o,\chi_o\rangle-\mathbb{E}\langle \Pi_{G,o,R} \chi_o,\chi_o\rangle \right|<\frac{\varepsilon}3. \]
Fix such an $R$. It is straightforward to see that for a large enough $n$, we have 
\[\left|\mathbb{E} \langle \Pi_{G,o,R} \chi_o,\chi_o\rangle-\frac{\rang_R(G_n)}{|V(G_n)|}\right|<\frac{\varepsilon}3.\]
Thus, for any large enough $n$, we have
\[\rang(G_n)<\rang_R(G_n)+\varepsilon |V(G_n)|.\] 

Choose an $n$, such that $R_n>R$. Then
\[\rang(G_n)<\rang_R(G_n)+\varepsilon |V(G_n)|\le \rang_{R_n}(G_n)+\varepsilon |V(G_n)|,\]
where in the last step, we used the fact that $\rang_{r}(G_n)$ is monotone increasing in $r$. This gives us a contradiction.
\end{proof}

\begin{lemma}\label{rankapprox2}
For any $D<\infty$ and $\varepsilon_1>0$, there is an $R$ such that for any finite graph $G$ with maximum degree at most $D$, the following holds. There is exceptional subset of the vertices $V_1\subset V(G)$ such that $|V_1|\le\varepsilon_1 |V(G)|$, and for any $o\in V(G)\backslash V_1$, we have
\[\langle\Pi_G\chi_o,\chi_o\rangle-\langle\Pi_{G,o,R}\chi_o,\chi_o\rangle<\varepsilon_1.\] 
\end{lemma}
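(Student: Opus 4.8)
The plan is to deduce this from Lemma \ref{rankapprox} by a simple averaging (Markov) argument, exploiting the fact that the individual defects are nonnegative and that their sum is exactly the global rank defect controlled by Lemma \ref{rankapprox}.

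First I would record the pointwise nonnegativity. For every vertex $o$, the image of $\Pi_{G,o,R}$ is contained in the image of $\Pi_G$ (this is precisely the containment already used in the proof of Lemma \ref{rankapprox}), so $\Pi_{G,o,R} \le \Pi_G$ as positive operators, and hence
\[
f(o) := \langle \Pi_G \chi_o, \chi_o\rangle - \langle \Pi_{G,o,R}\chi_o,\chi_o\rangle \ge 0
\]
for every $o \in V(G)$. Next I would observe that summing these defects over all vertices telescopes into the global rank defect:
\[
\sum_{o\in V(G)} f(o) = \sum_{o\in V(G)}\langle \Pi_G\chi_o,\chi_o\rangle - \sum_{o\in V(G)}\langle\Pi_{G,o,R}\chi_o,\chi_o\rangle = \rang(G) - \rang_R(G).
\]

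Now I would apply Lemma \ref{rankapprox} with its parameter $\varepsilon$ set equal to $\varepsilon_1^2$, which produces a radius $R$ (depending only on $D$ and $\varepsilon_1$) such that for every finite graph $G$ with maximum degree at most $D$,
\[
0 \le \sum_{o\in V(G)} f(o) = \rang(G) - \rang_R(G) \le \varepsilon_1^2\, |V(G)|.
\]
Finally, define the exceptional set $V_1 = \{o \in V(G) : f(o) \ge \varepsilon_1\}$. By Markov's inequality applied to the nonnegative quantities $f(o)$,
\[
\varepsilon_1\, |V_1| \le \sum_{o\in V_1} f(o) \le \sum_{o\in V(G)} f(o) \le \varepsilon_1^2\, |V(G)|,
\]
so $|V_1| \le \varepsilon_1 |V(G)|$, and for every $o \in V(G)\setminus V_1$ we have $f(o) < \varepsilon_1$, which is exactly the asserted bound.

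There is no genuine obstacle here: once Lemma \ref{rankapprox} is available, the statement is a routine averaging argument, and the only mild point to be careful about is choosing the input accuracy in Lemma \ref{rankapprox} to be $\varepsilon_1^2$ rather than $\varepsilon_1$, so that the Markov step yields an exceptional set of size at most $\varepsilon_1 |V(G)|$ while simultaneously giving the per-vertex bound $\varepsilon_1$. The nonnegativity of each $f(o)$ is what makes Markov's inequality applicable and is the one structural fact that must be invoked.
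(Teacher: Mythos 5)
Your proposal is correct and is essentially identical to the paper's own proof: both invoke Lemma \ref{rankapprox} with accuracy $\varepsilon_1^2$, use the image containment to get nonnegativity of the per-vertex defects, identify their sum with $\rang(G)-\rang_R(G)$, and conclude by Markov's inequality with threshold $\varepsilon_1$.
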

\begin{proof}
We apply Lemma \ref{rankapprox} for $D$ and $\varepsilon=\varepsilon_1^2$. Let $R$ be the constant provided by that lemma. Consider that any finite graph $G$ with maximum degree at most $D$. 

Note that $\langle\Pi_G\chi_o,\chi_o\rangle-\langle\Pi_{G,o,R}\chi_o,\chi_o\rangle\ge 0$ for any $o\in V(G)$,  because the image of $\Pi_{G,o,R}$ is contained in the image of  $\Pi_G$. Moreover, by the choice of $R$, we have
\[\sum_{o\in V(G)}\left( \langle\Pi_G\chi_o,\chi_o\rangle-\langle\Pi_{G,o,R}\chi_o,\chi_o\rangle\right)=\rang(G)-\rang_R(G)\le \varepsilon |V(G)|.\]
We set
\[V_1=\{o\in V(G)\quad |\quad \langle\Pi_G\chi_o,\chi_o\rangle-\langle\Pi_{G,o,R}\chi_o,\chi_o\rangle\ge \varepsilon_1\}. \]
Then, by Markov's inequality, we have
\[|V_1|\le \frac{\varepsilon|V(G)|}{\varepsilon_1}=\varepsilon_1|V(G)|.\]
%Take any $o\in V(G)\backslash V_0$.  Since the image of $\Pi_{G,o,R}$ is contained in the image of  $\Pi_G$, we see that $\Pi_G-\Pi_{G,o,R}$ is orthogonal projection. Thus,
%\[\|\Pi_G \chi_o-\Pi_{G,o,R}\chi_o\|^2=\langle (\Pi_G -\Pi_{G,o,R})\chi_o,\chi_o\rangle=\langle\Pi_G\chi_o,\chi_o\rangle-\langle\Pi_{G,o,R}\chi_o,\chi_o\rangle<\varepsilon^2.\]
\end{proof}

Now we are ready to prove Lemma \ref{lemma12}. We repeat the statement for the reader's convenience.
\begin{non-lemma}[Lemma \ref{lemma12} repeated]
For any $D,r<\infty$ and $\varepsilon_2>0$, there is an $R_2$ such that for any $R\ge R_2$ and for any finite graph $G$ with maximum degree at most $D$, the following holds. There is exceptional subset of the vertices $V_2\subset V(G)$ such that $|V_2|\le\varepsilon_2 |V(G)|$, and for any $o\in V(G)\backslash V_2$ and $u,v\in B_r(G,o)$, we have
\[\left|\langle\Pi_G\chi_u,\chi_v\rangle-\langle\Pi_{G,o,R}\chi_u,\chi_v\rangle\right|<\varepsilon_2,\] 
or equivalently,
\[\left|\langle\bar{P}_G\chi_u,\chi_v\rangle-\langle\bar{P}_{G,o,R}\chi_u,\chi_v\rangle\right|<\varepsilon_2.\]
\end{non-lemma}
\begin{proof}
We set $K=(r+1)D^r$. Note that any ball of radius $r$ in graph with maximum degree at most $D$ can contain at most $K$ vertices. We set $\varepsilon_1=\min\left(\frac{\varepsilon_2}{K},\varepsilon_2^2\right)$. Let $R_0$ be the constant provided by Lemma \ref{rankapprox2}. We set $R_2=R_0+r$. Let $R\ge R_2$. 

Consider that any finite graph $G$ with maximum degree at most $D$. Let $V_1\subset V(G)$ be the exceptional set provided by Lemma \ref{rankapprox2}.

 We set
\[V_2=\cup_{o\in V_1} V(B_r(G,o)).\]
Note that
\[|V_2|\le K|V_1|\le K\varepsilon_1 |V(G)|\le \varepsilon_2 |V(G)|.\]
If we choose any  $o\in V(G)\backslash V_2$ and $u,v\in B_r(G,o)$, then by the choice of $V_2$, we have that $u,v\in V(G)\backslash V_1$. Using that fact that the image of $\Pi_{G,o,R}$ is contained in the image of $\Pi_G$, we see that $\Pi_G-\Pi_{G,o,R}$ is an orthogonal projection. Therefore,
\[\|(\Pi_G-\Pi_{G,o,R})\chi_u\|^2=\langle (\Pi_G-\Pi_{G,o,R})\chi_u,\chi_u\rangle= \langle\Pi_G\chi_u,\chi_u\rangle-\langle\Pi_{G,o,R}\chi_u,\chi_u\rangle.\]
Now observe that $B_{R_0}(G,u)$ is contained in $B_{R}(G,o)$, thus the image of $\Pi_{G,u,R_0}$ is contained in $\Pi_{G,o,R}$. So  $\langle\Pi_{G,o,R}\chi_u,\chi_u\rangle\ge \langle \Pi_{G,u,R_0} \chi_u,\chi_u\rangle$. Therefore,
\[\|(\Pi_G-\Pi_{G,o,R_2})\chi_u\|^2= \langle\Pi_G\chi_u,\chi_u\rangle-\langle\Pi_{G,o,R}\chi_u,\chi_u\rangle\le \langle\Pi_G\chi_u,\chi_u\rangle-\langle\Pi_{G,u,R_0}\chi_u,\chi_u\rangle<\varepsilon_1,\]
since $u\in V(G)\backslash V_1$. Finally,
\[\left|\langle\Pi_G\chi_u,\chi_v\rangle-\langle\Pi_{G,o,R}\chi_u,\chi_v\rangle\right|=\left|\langle(\Pi_G-\Pi_{G,o,R})\chi_u,\chi_v\rangle\right|\le \|(\Pi_G-\Pi_{G,o,R})\chi_u\|<\sqrt{\varepsilon_1}\le\varepsilon_2 .\]
\end{proof}

We omit the proof of the following simple proposition.
\begin{proposition}
Fix a positive integer $n$, and an $\epsilon>0$. Then there is an $\delta>0$ with the following property. Let $A$ and $B$ two $n\times n$ matrices such that their entries are contained in  $[-1,1]$. Moreover, the entries of $A-B$ are contained in $[-\delta,\delta]$. Then $|\det A-\det B|<\varepsilon.$ 
\end{proposition}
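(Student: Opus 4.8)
The plan is to exploit the fact that the determinant is a polynomial of degree $n$ in the matrix entries, so that a uniform bound over the compact cube $[-1,1]^{n^2}$ yields the desired continuity estimate. Concretely, I would work directly from the Leibniz expansion
\[\det A=\sum_{\sigma\in S_n}\operatorname{sgn}(\sigma)\prod_{i=1}^n A_{i,\sigma(i)},\]
together with the analogous formula for $\det B$, so that $\det A-\det B$ is a signed sum over the $n!$ permutations of the differences of two products, each of $n$ entries.

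The key step is to estimate, for a fixed permutation $\sigma$, the difference $\prod_i A_{i,\sigma(i)}-\prod_i B_{i,\sigma(i)}$ by a telescoping identity. Writing $a_i=A_{i,\sigma(i)}$ and $b_i=B_{i,\sigma(i)}$, I would use
\[\prod_{i=1}^n a_i-\prod_{i=1}^n b_i=\sum_{k=1}^n\Bigl(\prod_{i<k} b_i\Bigr)(a_k-b_k)\Bigl(\prod_{i>k} a_i\Bigr).\]
Since every entry of $A$ and $B$ lies in $[-1,1]$, all factors $a_i,b_i$ have absolute value at most $1$, while $|a_k-b_k|\le\delta$ by hypothesis; hence each of the $n$ summands has absolute value at most $\delta$, which gives $\bigl|\prod_i a_i-\prod_i b_i\bigr|\le n\delta$.

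Summing this bound over all $n!$ permutations yields $|\det A-\det B|\le n\cdot n!\cdot\delta$. It then suffices to choose $\delta=\varepsilon/(2\,n\,n!)$ (indeed any $\delta<\varepsilon/(n\,n!)$ works), which forces the right-hand side to be strictly below $\varepsilon$, as required. There is no genuine obstacle here: the statement is a purely routine Lipschitz bound for the determinant on a bounded domain, and the only point demanding a moment's care is the telescoping estimate for a product of bounded factors. This is precisely why the proposition is labelled \emph{simple} and its proof is omitted in the text; an alternative, equally short route would be to bound the gradient of the polynomial $\det$ on $[-1,1]^{n^2}$ and invoke the mean value theorem, but the Leibniz telescoping is the most elementary and self-contained.
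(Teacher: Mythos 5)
Your proof is correct: the telescoping bound on each Leibniz term gives $|\det A-\det B|\le n\cdot n!\cdot\delta$, and the choice $\delta=\varepsilon/(2\,n\,n!)$ settles the claim. The paper itself omits the proof as ``simple,'' and your argument is precisely the routine estimate the author had in mind, so there is nothing to reconcile.
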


Combining this proposition with Lemma \ref{lemma12}, we obtain Theorem~\ref{thmlocalgen}.  
% \begin{theorem}
% For any $D,r<\infty$ and  $\varepsilon>0$, we have an $R_0$ with the following property. Consider any $R\ge R_0$ and a finite graph $G$ with maximum degree at most $D$. Then there is set of exceptional vertices $V_{ex}$ such that $|V_{ex}|\le \varepsilon|V(G)|$. For any vertex $o\not\in V_{ex}$,  consider the determintal processes corresponding to $\bar{P}_G$ and $\bar{P}_{G,o,R}$, and consider their marginal in the window $B_r(G,o)$. Then these marginals have total variation distance at most $\varepsilon$. 
% \end{theorem}

% Note that the marginal of the determinantal processes corresponding $\bar{P}_{G,o,R}$ in the window $B_r(G,o)$ only depends on the $R+1$ neighborhood of $o$. Thus, for most points $o$, the determinantal process corresponding $\bar{P}_G$ can be well approximated in the $r$-neighborhood of $o$ by only looking at the $R+1$ neighborhood of $o$.

\subsection{The proof of Theorem \ref{projconv}}

We need the following characterization of local weak convergence.
\begin{proposition}
\hfill
\begin{enumerate}[(i)]
\item The sequence $(G_n,o_n)$ of random rooted graphs converges to the random rooted graph $(G,o)$ if and only if the following holds. For any $r<\infty$ and $\varepsilon>0$, we have an $n_0$ such that for any $n>n_0$, we have a coupling of $(G_n,o_n)$ and $(G,o)$ such that with probability at least $1-\varepsilon$, the ball $B_r(G_n,o_n)$ is isomorphic to ball $B_r(G,o)$.
\item The sequence $(G_n,o_n,T_n)$ of random \rwpcs\ converges to the random \rwpc\ $(G,o,T)$ if and only if the following holds. For any $r<\infty$ and $\varepsilon>0$, we have an $n_0$ such that for any $n>n_0$, we have a coupling of $(G_n,o_n,T_n)$ and $(G,o,T)$ such that with probability at least $1-\varepsilon$, we have a rooted isomorphism $\psi$ from $B_r(G,o)$ to $B_r(G_n,o_n)$ such that for all $x,y\in V(B_r(G,o))$, we have
\[\left|\langle T\chi_x,\chi_y\rangle-\langle T_n\chi_{\psi(x)},\chi_{\psi(y)}\rangle\right|<\varepsilon.\]  
\end{enumerate}
\end{proposition}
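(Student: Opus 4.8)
The plan is to prove (ii) directly and to obtain (i) as the same argument with the operator data suppressed: (i) is literally the restriction of (ii) to the case where every $T$ is the zero operator, since then each inequality $|\langle T\chi_x,\chi_y\rangle-\langle T_n\chi_{\psi(x)},\chi_{\psi(y)}\rangle|<\varepsilon$ is vacuous and only the graph isomorphism survives. Both $\mathcal{G}_D$ and the space of \rwpcs\ are compact metric spaces, so weak* convergence of the laws is metrized by the L\'evy--Prokhorov metric $d_{LP}$, and every bounded continuous test function on these spaces is automatically uniformly continuous. The whole proof therefore reduces to a deterministic dictionary translating ``the intrinsic distance $d$ is small'' into ``large balls are isomorphic with small tolerance'', and back.

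First I would record this dictionary. The key observation is that, for a fixed pair of \rwpcs, the set of $\varepsilon$ for which the defining ball-condition holds is upward closed: a root isomorphism on $B_{\lfloor\varepsilon^{-1}\rfloor}$ with tolerance $<\varepsilon$ restricts to one on any smaller ball with the same (hence weaker) tolerance, so the condition for $\varepsilon$ implies it for every $\varepsilon''>\varepsilon$. Consequently $d<\delta$ already forces a root isomorphism on $B_{\lfloor\delta^{-1}\rfloor}$ matching all $T$-entries within $\delta$; taking $\delta\le\min(1/r,\varepsilon)$ and restricting yields a root isomorphism on $B_r$ with tolerance $<\varepsilon$. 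Conversely, producing a root isomorphism on $B_r$ that matches entries within $\varepsilon$ certifies $d\le\max(1/(r+1),\varepsilon)$ up to arbitrarily small slack. Thus ``$d$ small'' and ``balls of large radius match with small tolerance'' are interchangeable, with an explicit correspondence between $\delta$ and the pair $(r,\varepsilon)$.

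For the forward implication (convergence $\Rightarrow$ couplings) I would invoke Strassen's theorem: on a separable metric space, $d_{LP}(\mu,\nu)<\delta$ produces a coupling $\pi$ with $\pi(\{d\ge\delta\})\le\delta$. Given $r$ and $\varepsilon$, set $\delta=\min(1/r,\varepsilon)$; weak convergence gives $d_{LP}<\delta$ between the two laws for $n>n_0$, and the Strassen coupling then satisfies $d<\delta$ with probability at least $1-\delta\ge1-\varepsilon$, on which event the dictionary supplies the required $\psi$ on $B_r$ with tolerance $<\varepsilon$. For the reverse implication (couplings $\Rightarrow$ convergence) I would test against an arbitrary bounded continuous $f$ with $|f|\le M$. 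Uniform continuity gives $\rho>0$ with $d<\rho\Rightarrow|f(x)-f(y)|<\eta$; the dictionary then dictates the choice $r>1/\rho$ and $\varepsilon<\min(\rho,\eta/(2M))$, so that the good event forces $d<\rho$. Applying the coupling hypothesis for $n>n_0$ and splitting the expectation $|\mathbb{E}f(G_n,o_n,T_n)-\mathbb{E}f(G,o,T)|$ over the good event (contributing $<\eta$) and its complement (probability $\le\varepsilon$, contributing $\le2M\varepsilon$) gives a bound below $2\eta$, and letting $\eta\to0$ yields weak convergence.

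The expectation split and the citation of Strassen's theorem are routine. The one place demanding genuine care is the parameter book-keeping in the dictionary, because the metric entangles the radius $\lfloor\varepsilon^{-1}\rfloor$ and the tolerance $\varepsilon$ through a single parameter; one must verify that the upward-closedness of the ball-condition really allows a single metric bound $d<\delta$ to deliver an isomorphism on a ball of the prescribed radius $r$ \emph{and} with the prescribed tolerance $\varepsilon$ simultaneously, rather than forcing a trade-off between the two. Once this is pinned down, parts (i) and (ii) run identically, the only difference being the presence or absence of the operator inequality.
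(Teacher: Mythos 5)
Your proof is correct and follows essentially the same route as the paper, whose entire argument is the one--line observation that the weak* topology on these compact spaces is induced by the Wasserstein metric; your substitution of the L\'evy--Prokhorov metric plus Strassen's theorem is an equivalent coupling-based metrization, and the ``dictionary'' between metric smallness and ball isomorphisms with tolerance is exactly the detail the paper leaves implicit as ``easy.''
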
 
\begin{proof}
The statements follow easily from the fact that the Wasserstein metric induces the topology of the weak* convergence.
\end{proof}

Now we prove Theorem \ref{projconv}. Let $\Pi_n=\Pi_{G_n}$. The statement of the theorem is equivalent to that the sequence $(G_n,\Pi_n)$ converges to $(G,o,\Pi_G)$. It will be more convenient to prove this statement instead. 

Let $o_n$ be a uniform random vertex of $G_n$. With a slight abuse of notation, we will write $(G_n,o_n,\Pi_n)$ instead of the more precise $((G_n)_{o_n},o_n,\Pi_n)$. 
 
Pick any $r<\infty$ and $\varepsilon>0$. As $R$ tends to infinity, the images of $\Pi_{G,o,R}$ form an increasing  sequence of subspaces such that their union is a dense subset  of the image of $\Pi_G$. Therefore, $\Pi_{G,o,R}$ converge to $\Pi_G$ in the strong operator topology.  Therefore, there is an $R_1$ such that for any $R\ge R_1$ the following holds. Consider the limiting random rooted graph $(G,o)$. With probability $1-\varepsilon$, we have  
\begin{align*}
\left|\langle \Pi_G\chi_x,\chi_y\rangle-\langle \Pi_{G,o,R}\chi_{x},\chi_{y}\rangle\right|<\varepsilon && \forall x,y\in B_r(G,o).
\end{align*}

Let $R_2$ be the constant provided by Lemma \ref{lemma12}. Take any $R$ such that $R\ge \max(R_1,R_2)$. Then for any large enough $n$, we have a coupling of $(G_n,o_n)$ and $(G,o)$ such that with probability $1-\varepsilon$, an isomorphism between $B_{R+1}(G,o)$ and $B_{R+1}(G_n,o_n)$. Let $V_2\subset V(G_n)$ be the set exceptional vertices provided by Lemma \ref{lemma12}. Then the following event have probability at least $1-3\varepsilon$ (with respect to the joint distribution of $(G_n,o_n)$ and $(G,o)$):
\begin{itemize}
\item  We have \begin{align*}
\left|\langle \Pi_G\chi_x,\chi_y\rangle-\langle \Pi_{G,o,R}\chi_{x},\chi_{y}\rangle\right|<\varepsilon && \forall x,y\in B_r(G,o),
\end{align*}

\item also $o_n\in V(G_n)\backslash V_2$, 

\item furthermore, $B_{R+1}(G,o)$ and $B_{R+1}(G_n,o_n)$ are isomorphic.

\end{itemize} 

On this event, let $\bar{\psi}$ be a rooted isomorphism from $B_{R+1}(G,o)$ to $B_{R+1}(G_n,o_n)$, and let $\psi$ be the restriction of $\bar{\psi}$ to $B_r(G,o)$.

Then for any $x,y\in B_r(G,o)$, we have 
\begin{align*}
|\langle \Pi_{G}\chi_x,\chi_y\rangle&-\langle \Pi_{G_n}\chi_{\psi(x)},\chi_{\psi(y)}\rangle|\\
&\le\left|\langle \Pi_{G}\chi_x,\chi_y\rangle-\langle \Pi_{G,o,R}\chi_x,\chi_y\rangle\right|+\left|\langle \Pi_{G,o,R}\chi_x,\chi_y\rangle-\langle \Pi_{G_n,o_n,R}\chi_{\psi(x)},\chi_{\psi(y)}\rangle\right|\\&\qquad+\left|\langle \Pi_{G_n,o_n,R}\chi_{\psi(x)},\chi_{\psi(y)}\rangle- \langle\Pi_{G_n}\chi_{\psi(x)},\chi_{\psi(y)}\rangle\right|\\
&<\varepsilon+0+\varepsilon.
\end{align*}
Thus, the statement follows.

\section{Description of the projection matrices $P_G^z$ and $\bar{P}_G$}

\subsection{The proof of Lemma \ref{lemmamatrix}}
We consider $o$ as the root $G$. For any vertex $x$, let $V_x$ be the set of descendants of $x$ including $x$ itself, that is, a vertex $v$ is in $V_x$ if the unique path from $o$ to $v$ contains $x$. Let $S_x=S\cap V_x$, $T_x=T\cap V_x$. Let $G_x$ be the subgraph of $G$ induced by $V_x$.  Let  $b_x^z(s)=b_{G_x}^z(s)$, $R_x^z=R_{G_x,S_x,T_x}^z$ and let $P_x^z$ be the orthogonal projection to $R_x^z$.

Finally, let $h_x^z=\|P_x^z\chi_x\|^2=\langle P_x^z\chi_x,\chi_x\rangle$.
\begin{proposition}
If $x\in S$, then
\begin{equation}\label{fLkS0}
P_x^z \chi_x=\alpha \left(b_x^z(x)-\sum_{t\succ x} P_t^z \chi_t\right),
\end{equation}
where \[\alpha=\frac{z}{z^2+\sum_{t\succ x}(1-h_t^z)}.\]
 Moreover,
\[h_x^z=\|P_x^z \chi_x\|^2=\frac{z^2}{z^2+\sum_{t\succ x}(1-h_t^z)}.\] 

If $x\in T$, then
\[P_x^z \chi_x=\chi_x-\alpha \left(b_x^{-z}(x)-\sum_{s\succ x} (\chi_s-P_s^z \chi_s)\right)=(1+\alpha z)\chi_x-\alpha \sum_{s\succ x} P_s^z \chi_s ,\]
where \[\alpha=\frac{-z}{z^2+\sum_{s\succ x}h_s^z}.\] Moreover,
\[h_x^z=\|P_x^z \chi_x\|^2=1-\frac{z^2}{z^2+\sum_{s\succ x}h_s^z}.\] 
\end{proposition}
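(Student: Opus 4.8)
The plan is to compute $P_x^z\chi_x$ recursively by exploiting the tree structure: the subspace $R_x^z$ is generated by the vectors $b_x^z(s)$ for $s\in S_x$, and when we move to the child subtrees these generators decouple. First I would handle the case $x\in S$. Here $\chi_x$ appears only in the generator $b_x^z(x)=z\chi_x+\sum_{t\succ x}\chi_t$, while every other generator $b_x^z(s)$ with $s\neq x$ lies in one of the child subtrees rooted at a child $t\succ x$ (since those $s$ are descendants of some such $t$), and hence lives in $R_t^z\subseteq\ell^2(V_t)$. The key structural observation is that the subspaces $R_t^z$ for distinct children $t$ are supported on disjoint vertex sets $V_t$, so they are mutually orthogonal, and $b_x^z(x)$ is the only generator touching the coordinate $x$ together with the coordinates $\chi_t$ at the children.

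\medskip

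With this in hand, I would argue that $P_x^z\chi_x$ is a scalar multiple of the component of $b_x^z(x)$ orthogonal to $\bigoplus_{t\succ x}R_t^z$. Projecting $\chi_t$ off $R_t^z$ replaces each $\chi_t$ in $b_x^z(x)$ by $\chi_t-P_t^z\chi_t$, which explains the formula $P_x^z\chi_x=\alpha\bigl(b_x^z(x)-\sum_{t\succ x}P_t^z\chi_t\bigr)$. To find $\alpha$ and $h_x^z$, I would compute the squared norm of the vector in parentheses. Since the $\chi_t-P_t^z\chi_t$ are supported on disjoint $V_t$ and are each orthogonal to $R_t^z$, and $z\chi_x$ is supported on the single vertex $x$ disjoint from all $V_t$, Pythagoras gives
\[
\Bigl\|b_x^z(x)-\sum_{t\succ x}P_t^z\chi_t\Bigr\|^2=z^2+\sum_{t\succ x}\|\chi_t-P_t^z\chi_t\|^2=z^2+\sum_{t\succ x}(1-h_t^z).
\]
Then $\alpha$ is determined by requiring $P_x^z\chi_x$ to equal the true projection, i.e.\ $\langle P_x^z\chi_x,\chi_x\rangle=\langle b_x^z(x),\chi_x\rangle\alpha=z\alpha$ must match $\|P_x^z\chi_x\|^2=\alpha^2\cdot(z^2+\sum(1-h_t^z))$, yielding $\alpha=z/(z^2+\sum_{t\succ x}(1-h_t^z))$ and $h_x^z=z\alpha$, exactly the stated values.

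\medskip

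For the case $x\in T$, the vertex $x$ is not itself an element of $S$, so it does not index a generator; instead I would use Lemma~\ref{ortho}, which says $R_{G_x,S_x,T_x}^z$ and $R_{G_x,T_x,S_x}^{-z}$ are orthogonal complements in $\ell^2(V_x)$. Thus $P_x^z=I-Q$, where $Q$ is the projection onto $R_{G_x,T_x,S_x}^{-z}$, and now $x$ plays the role of a root in the ``$T$-side'' color class, so the previous computation applies to $Q\chi_x$ with $z$ replaced by $-z$ and the roles of children swapped. This gives $P_x^z\chi_x=\chi_x-\alpha\bigl(b_x^{-z}(x)-\sum_{s\succ x}(\chi_s-P_s^z\chi_s)\bigr)$ with $\alpha=-z/(z^2+\sum_{s\succ x}h_s^z)$, and using $b_x^{-z}(x)=-z\chi_x+\sum_{s\succ x}\chi_s$ the bracket simplifies so that the $\chi_s$ terms cancel, leaving $(1+\alpha z)\chi_x-\alpha\sum_{s\succ x}P_s^z\chi_s$; the norm formula $h_x^z=1-z^2/(z^2+\sum_{s\succ x}h_s^z)$ then follows from $h_x^z=1-\|Q\chi_x\|^2$ and the $S$-case norm computation applied to $Q$.

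\medskip

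The main obstacle I expect is justifying rigorously that $P_x^z\chi_x$ is indeed the asserted scalar multiple of the orthogonal-to-$\bigoplus_t R_t^z$ component of $b_x^z(x)$ — that is, checking that no generator other than $b_x^z(x)$ has any overlap with the coordinate $x$, and that the residual directions contributed by the children's subspaces are exactly accounted for by the $P_t^z\chi_t$ terms. This rests on the disjointness of the descendant sets $V_t$, which is where the tree hypothesis is essential: in a graph with cycles two children could share descendants and the subspaces $R_t^z$ would no longer be orthogonal, breaking the clean Pythagorean bookkeeping. Once that orthogonal decomposition is established the remaining computations of $\alpha$ and $h_x^z$ are routine, and the $T$-case reduces cleanly to the $S$-case via Lemma~\ref{ortho}.
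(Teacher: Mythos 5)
Your proposal is correct and takes essentially the same route as the paper: the same tree-recursive structure (the child subspaces $R_t^z$ sit inside $R_x^z$, are supported on the disjoint sets $V_t$, and satisfy $\chi_t-P_t^z\chi_t\perp R_t^z$), the same Pythagorean norm computation giving $\alpha$ and $h_x^z$, and the identical reduction of the $T$-case to the $S$-case via Lemma~\ref{ortho}. The only difference is presentational: the paper \emph{verifies} the guessed formula by checking that the residual $\chi_x-\alpha\bigl(b_x^z(x)-\sum_{t\succ x}P_t^z\chi_t\bigr)$ is orthogonal to every generator $b_x^z(s)$, while you \emph{derive} it by writing $R_x^z$ as the orthogonal sum of $\bigoplus_{t\succ x}R_t^z$ and the span of $b_x^z(x)-\sum_{t\succ x}P_t^z\chi_t$ and projecting $\chi_x$ onto the latter rank-one piece.
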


\begin{proof}
Fist assume that $x\in S$. Observe that  $R_t^z$ is a subspace of $R_x^z$ for any $t\succ x$. Therefore, it is clear that
\[\alpha \left(b_x^z(x)-\sum_{t\succ x} P_t^z \chi_t\right)\in R_x^z.\] 

Observe that
\[\chi_x-\alpha \left(b_x^z(x)-\sum_{t\succ x} P_t^z \chi_t\right)=(1-z\alpha) \chi_x-\alpha\sum_{t\succ x}(\chi_t-P_t^z\chi_t).\]
Take any $s\in S_x$ such that $s\neq x$. Let $r$ be the unique child of $x$ such that $s\in S_r$. Then
\[\langle (1-z\alpha) \chi_x-\alpha\sum_{t\succ x}(\chi_t-P_t^z\chi_t),b_x^z(s)\rangle=-\alpha\langle \chi_r-P_r^z\chi_r,b_r^z(s)\rangle=0.\]
Moreover,
\begin{align*}
\langle (1-z\alpha) \chi_x-\alpha\sum_{t\succ x}(\chi_t-&P_t^z\chi_t),b_x^z(x)\rangle\\&=\langle (1-z\alpha) \chi_x-\alpha\sum_{t\succ x}(\chi_t-P_t^z\chi_t),z\chi_x+\sum_{t\succ x}\chi_t\rangle\\
&=z(1-z\alpha)-\alpha\sum_{t\succ x} \langle \chi_t-P_t^z\chi_t,\chi_t\rangle\\
&=z(1-z\alpha)-\alpha\sum_{t\succ x}(1-h_t^z)\\
&=z-\alpha(z^2+\sum_{t\succ x}(1-h_t^z))=0.
\end{align*}
% \Bigg\langle\alpha \left(b_x^z(x)-\sum_{t\succ x} P_t^z \chi_t\right),&\quad\chi_x-\alpha \left(b_x^z(x)-\sum_{t\succ x} P_t^z \chi_t\right)\Bigg\rangle\\
% &=\alpha z-\alpha^2 \|b_x^z(x)\|^2- \alpha^2\sum_{t\succ x}( \left \|P_t^z \chi_t\|^2-2\langle b_x^z(x),P_t^z \chi_t \rangle\right)\\&=\alpha z-\alpha^2(z^2+\sum_{t\succ x} 1)+\alpha^2\sum_{t\succ x}h_t^z\\&=\alpha z-\alpha^2 z^2-\alpha^2\sum_{t\succ x}\left(1- h_t^z\right)=0 . 
% \end{align*}

This proves that $P_x^z \chi_x=\alpha \left(b_x^z(x)-\sum_{t\succ x} P_t^z \chi_t\right)$. Furthermore,
\begin{align*}
h_x^z&=\|P_x^z \chi_x\|^2\\&=\alpha^2 \|b_x^z(x)\|^2+ \alpha^2\sum_{t\succ x}( \left \|P_t^z \chi_t\|^2-2\langle b_x^z(x),P_t^z \chi_t \rangle\right)\\&=\alpha^2 \|b_x^z(x)\|^2+ \alpha^2\sum_{t\succ x}( \left \|P_t^z \chi_t\|^2-2\langle \chi_t,P_t^z \chi_t \rangle\right)\\&=\alpha^2(z^2+\sum_{t\succ x} 1)-\alpha^2\sum_{t\succ x}h_t^z\\
&=\alpha^2(z^2+\sum_{t\succ x} 1)-\alpha^2\sum_{t\succ x}h_t^z\\&=\alpha^2 z^2+\alpha^2\sum_{t\succ x}\left(1- h_t^z\right)=\frac{z^2}{z^2+\sum_{t\succ x}(1-h_t^z)}. 
\end{align*}

 Now we prove the case when $x\in T$. Let $Q_x^z$ be the orthogonal projection to the subspace $R_{G_x,T_x,S_x}^z$. By Lemma \ref{ortho}, we see that $P_x^z=I-Q_x^{-z}$. Using the already established case, we see that
 \[Q_x^{-z} \chi_x=\alpha \left(b_x^{-z}(x)-\sum_{s\succ x} Q_s^{-z} \chi_s\right),\]
where 
 \[\alpha=\frac{-z}{z^2+\sum_{s\succ x}(1-\|Q_s^{-z}\chi_s\|^2)}=\frac{-z}{z^2+\sum_{s\succ x}h_s^{z}}.\]
 Then
 \begin{multline*}
P_x^z\chi_x=\chi_x-Q_x^{-z}\chi_x=\chi_x-\alpha\left(b_x^{-z}(x)-\sum_{s\succ x} Q_s^{-z} \chi_s\right)\\=\chi_x-\alpha\left(b_x^{-z}(x)-\sum_{s\succ x} (\chi_s-P_s^z\chi_s) \chi_s\right).
\end{multline*}

 Moreover, using again the already established case, we see that
 \begin{multline*}
\|P_x^z\chi_x\|^2=1-\|Q_x^{-z}\chi_x\|^2=1-\frac{z^2}{z^2+\sum_{s\succ x}(1-\|Q_s^{-z}\chi_s\|^2)}\\=1-\frac{z^2}{z^2+\sum_{s\succ x}\|P_s^{z}\chi_s\|^2}=1-\frac{z^2}{z^2+\sum_{s\succ x}h_s^z}.
\end{multline*}
\end{proof}

Let 
\[m_x^z=\begin{cases} 
h_x^z & \text{if }x\in S,\\
1-h_x^z &\text{if } x\in T.
\end{cases}
\]

It is clear from the previous lemma that
\begin{proposition}\label{proprekurz}
\[m_x^z=\frac{z^2}{z^2+\sum_{y\succ x} m_y^z}.\]
\end{proposition}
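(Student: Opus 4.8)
The statement to prove is Proposition~\ref{proprekurz}, which asserts the uniform recursion
\[
m_x^z=\frac{z^2}{z^2+\sum_{y\succ x} m_y^z}
\]
for both color classes, where $m_x^z$ is defined as $h_x^z$ when $x\in S$ and as $1-h_x^z$ when $x\in T$. The plan is to derive this directly from the two cases of the preceding proposition, which already supply closed-form recursions for $h_x^z$, and simply to verify that the case-split definition of $m_x^z$ unifies them. The whole argument is a short substitution: there is no genuine obstacle, only the bookkeeping of which vertices are in $S$ versus $T$ and how $h$ relates to $m$ for the children.

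First I would treat the case $x\in S$. Here $m_x^z=h_x^z$ by definition, and the children $y\succ x$ all lie in $T$, so $m_y^z=1-h_y^z$ for each such $y$. The previous proposition gives
\[
h_x^z=\frac{z^2}{z^2+\sum_{y\succ x}(1-h_y^z)}.
\]
Substituting $1-h_y^z=m_y^z$ in the denominator and $h_x^z=m_x^z$ on the left immediately yields $m_x^z=z^2/\bigl(z^2+\sum_{y\succ x} m_y^z\bigr)$, as required.

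Next I would treat the case $x\in T$. Now $m_x^z=1-h_x^z$, and the children $y\succ x$ all lie in $S$, so $m_y^z=h_y^z$. The previous proposition gives
\[
h_x^z=1-\frac{z^2}{z^2+\sum_{y\succ x} h_y^z},
\]
so that $m_x^z=1-h_x^z=z^2/\bigl(z^2+\sum_{y\succ x} h_y^z\bigr)$. Replacing $h_y^z$ by $m_y^z$ in the denominator again produces the desired identity. Since $G$ is bipartite, every child of a vertex lies in the opposite color class, so these two cases exhaust all possibilities and the single formula holds for every $x\in V$.
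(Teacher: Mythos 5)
Your proof is correct and is exactly the argument the paper intends: the paper simply states that the proposition is ``clear from the previous lemma,'' and your two-case substitution (using that children of an $S$-vertex lie in $T$ and vice versa) is precisely the verification being left to the reader.
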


Thus $m_x^z$ is the unique solution of the system of equations given in Lemma \ref{fixedpointptemp}.% coincides with the definition given in the Introduction.
%Given  $o\neq s\in S$, let $p(s)$ be the parent of $s$. We define $q_s=m_{p(s)}^z \left(z^2+\sum_{t\succ s} m_t^z\right)^{-1}$.

%To emphasize the dependency on the choice of $G$ and $o$ we write $h_{G,o,x}^z$ and $m_{G,o,x}^z$ in place of $h_x^z$ and $m_x^z$, whenever it is needed.

\begin{lemma}\label{twosT}
If $x\in T$, then
\[P_x^z\chi_x=\sum_{s\succ x}\frac{m_x^z m_s^z}{z^2}\left(b_G^z(s)-\sum_{t\succ s} P_t^z\chi_t\right).\]
\end{lemma}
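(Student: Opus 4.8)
The plan is to build on the explicit formula for $P_x^z\chi_x$ when $x\in T$ that was established in the previous proposition, namely
\[
P_x^z\chi_x=(1+\alpha z)\chi_x-\alpha\sum_{s\succ x}P_s^z\chi_s,
\qquad \alpha=\frac{-z}{z^2+\sum_{s\succ x}h_s^z}.
\]
First I would rewrite the coefficients in terms of the $m$-variables. Since $x\in T$ we have $m_x^z=1-h_x^z$, and the recursion of Proposition~\ref{proprekurz} gives $m_x^z=\frac{z^2}{z^2+\sum_{s\succ x}m_s^z}$; moreover for the children $s\succ x$ we have $s\in S$, so $m_s^z=h_s^z$. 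Thus $\sum_{s\succ x}h_s^z=\sum_{s\succ x}m_s^z=z^2\frac{1-m_x^z}{m_x^z}$, which yields $z^2+\sum_{s\succ x}h_s^z=z^2/m_x^z$ and hence $\alpha=-m_x^z/z$ and $1+\alpha z=1-m_x^z=h_x^z$. So the formula becomes
\[
P_x^z\chi_x=h_x^z\,\chi_x+\frac{m_x^z}{z}\sum_{s\succ x}P_s^z\chi_s.
\]

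Next I would expand each $P_s^z\chi_s$ for the child $s\in S$ using the $S$-case of the same proposition, which states $P_s^z\chi_s=\alpha_s\bigl(b_s^z(s)-\sum_{t\succ s}P_t^z\chi_t\bigr)$ with $\alpha_s=z/(z^2+\sum_{t\succ s}(1-h_t^z))$. The same recursion manipulation as above (now applied at $s$, whose children $t$ lie in $T$ so $1-h_t^z=m_t^z$) gives $\alpha_s=m_s^z/z$. The only remaining subtlety is the distinction between $b_s^z(s)=b_{G_s}^z(s)$ and the global $b_G^z(s)$: the vertex $s$ has the parent $x$ as a neighbor in $G$ but not in $G_s$, so $b_G^z(s)=b_s^z(s)+\chi_x$. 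I would therefore track this extra $\chi_x$ term carefully.

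Substituting, the target right-hand side $\sum_{s\succ x}\frac{m_x^zm_s^z}{z^2}\bigl(b_G^z(s)-\sum_{t\succ s}P_t^z\chi_t\bigr)$ equals $\frac{m_x^z}{z}\sum_{s\succ x}\frac{m_s^z}{z}\bigl(b_s^z(s)+\chi_x-\sum_{t\succ s}P_t^z\chi_t\bigr)=\frac{m_x^z}{z}\sum_{s\succ x}\bigl(P_s^z\chi_s+\frac{m_s^z}{z}\chi_x\bigr)$, using $\frac{m_s^z}{z}\bigl(b_s^z(s)-\sum_{t\succ s}P_t^z\chi_t\bigr)=P_s^z\chi_s$. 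Hence the right-hand side equals $\frac{m_x^z}{z}\sum_{s\succ x}P_s^z\chi_s+\frac{m_x^z}{z^2}\bigl(\sum_{s\succ x}m_s^z\bigr)\chi_x$. To match my rewritten formula for $P_x^z\chi_x$, I only need to check the $\chi_x$-coefficients agree, i.e.\ $\frac{m_x^z}{z^2}\sum_{s\succ x}m_s^z=h_x^z=1-m_x^z$; this is immediate from $\sum_{s\succ x}m_s^z=z^2(1-m_x^z)/m_x^z$, which is just the recursion rearranged.

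The main obstacle here is purely bookkeeping rather than conceptual: one must be disciplined about the two competing conventions, namely the local vectors $b_s^z(s)$ versus global $b_G^z(s)$ (differing by $\chi_x$), and the coefficient conversions $\alpha\to m^z/z$ using the fixed-point recursion at each level. Once those substitutions are made consistently, the identity reduces to a single scalar check on the $\chi_x$-coefficient, so I expect no genuine difficulty beyond care with signs and the parent-edge correction term.
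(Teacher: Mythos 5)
Your proof is correct and takes essentially the same route as the paper's: both start from the $T$-case formula $P_x^z\chi_x=(1+\alpha z)\chi_x-\alpha\sum_{s\succ x}P_s^z\chi_s$ of the preceding proposition, use the recursion of Proposition \ref{proprekurz} to convert the coefficients into $m_x^z/z$ and $m_s^z/z$, and handle the parent-edge correction $b_G^z(s)=b_s^z(s)+\chi_x$ so that the leftover $\chi_x$ terms account for the coefficient $1-m_x^z$. The only cosmetic difference is that you rewrite the target right-hand side and match coefficients, whereas the paper transforms the left-hand side forward into the target; the ingredients and computations are identical.
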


\begin{proof}
Note that for \[\alpha=\frac{-z}{z^2+\sum_{s\succ x} h_s^z},\] we have
\[\alpha=\frac{-z}{z^2+\sum_{s\succ x} m_s^z}=-\frac{1}{z}\frac{1}{1+\sum_{s\succ x}(z^2+\sum_{t\succ s}m_t^z)^{-1}}.\]

Thus
\begin{align*}
P_x^z\chi_z&=(1+\alpha z)\chi_x-\alpha \sum_{s\succ x} P_s^z \chi_s\\&=\frac{\sum_{s\succ x}(z^2+\sum_{t\succ s}m_t^z)^{-1}}{1+\sum_{s\succ x}(z^2+\sum_{t\succ s}m_t^z)^{-1}}\chi_x\\&\qquad\qquad +\frac{1}{1+\sum_{s\succ x}(z^2+\sum_{t\succ s}m_t^z)^{-1}} \sum_{s\succ x}\left(z^2+\sum_{t\succ s} m_t^z\right)^{-1}\left(b_s^z(s)-\sum_{t\succ s} P_t^z\chi_t\right)\\&=\frac{1}{1+\sum_{s\succ x}(z^2+\sum_{t\succ s}m_t^z)^{-1}} \sum_{s\succ x}\left(z^2+\sum_{t\succ s} m_t^z\right)^{-1}\left(b_s^z(s)+\chi_x-\sum_{t\succ s} P_t^z\chi_t\right)\\&=\frac{1}{1+\sum_{s\succ x}(z^2+\sum_{t\succ s}m_t^z)^{-1}} \sum_{s\succ x}\left(z^2+\sum_{t\succ s} m_t^z\right)^{-1}\left(b_G^z(s)-\sum_{t\succ s} P_t^z\chi_t\right)\\&=m_x^z \sum_{s\succ x}\left(z^2+\sum_{t\succ s} m_t^z\right)^{-1}\left(b_G^z(s)-\sum_{t\succ s} P_t^z\chi_t\right)\\&=\sum_{s\succ x}\frac{m_x^z m_s^z}{z^2}\left(b_G^z(s)-\sum_{t\succ s} P_t^z\chi_t\right).
\end{align*}

\end{proof}

Let $L_k$ be the set of point at distance $k$ from the root. 

\begin{lemma}
If $o\in S$, then for any $k$, we have
\begin{equation}\label{fLkS}
P_G^z\chi_o=\sum_{i=0}^k \sum_{s\in L_{2i}} (-1)^{i} z^{-1}w^z_s b_G^z(s)-(-1)^k\sum_{s\in L_{2k}}z^{-1}w^z_s\sum_{t\succ s} P_t^z \chi_t.
\end{equation}
If $o\in T$, then for any $k$, we have  
\begin{equation}\label{fLkT}
P_G^z\chi_o=\sum_{i=0}^k \sum_{s\in L_{2i+1}} (-1)^{i} z^{-1}w^z_s b_G^z(s)-(-1)^k\sum_{s\in L_{2k+1}}z^{-1}w^z_s\sum_{t\succ s} P_t^z \chi_t.
\end{equation}
% In particular,
% \[P_G^z\chi_o=\sum_{s\in S} w_s b_G^z(s).\]
Moreover,
\begin{equation}\label{szomf}
\langle P_G^z\chi_o,\chi_x\rangle=
\begin{cases}
(-1)^{\lfloor \ell(x)/2 \rfloor}w^z_x & \text{if }x\in S,\\
\sum_{y\sim x} (-1)^{\lfloor \ell(y)/2 \rfloor} z^{-1}w^z_y &\text{if }x\in T.
\end{cases}
\end{equation}
%where the symbol $\sum_{y\sim x}$ means summation over the neighbors $y$ of $x$.
\end{lemma}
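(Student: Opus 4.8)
The plan is to establish the vector identities \eqref{fLkS} and \eqref{fLkT} by induction on $k$, and then to read off \eqref{szomf} by pairing the level-$k$ identity with $\chi_x$ for a suitably large $k$. The single arithmetic fact that drives the whole induction is the multiplicativity of the weights along the tree: if $s\in L_{2k}$, $t\succ s$ and $s'\succ t$, so that $s'\in L_{2k+2}$ and the path from $o$ to $s'$ prolongs the path to $s$ by the two vertices $t,s'$, then the definition of $w^z$ gives $w_{s'}^z=z^{-2}w_s^z m_t^z m_{s'}^z$, equivalently
\[
z^{-1}w_s^z\cdot\frac{m_t^z m_{s'}^z}{z^2}=z^{-1}w_{s'}^z .
\]
This is exactly the coefficient bookkeeping needed to match Lemma \ref{twosT} against the weights appearing in \eqref{fLkS}.

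For the base case $k=0$ with $o\in S$ I would invoke \eqref{fLkS0}: since $o$ has no parent, $b_o^z(o)=b_G^z(o)$, and writing $1-h_t^z=m_t^z$ for the children $t\in T$ of $o$, the scalar there is $\alpha=z/(z^2+\sum_{t\succ o}m_t^z)=z^{-1}m_o^z=z^{-1}w_o^z$ by Proposition \ref{proprekurz}; this is precisely \eqref{fLkS} for $k=0$. For $o\in T$ I would instead apply Lemma \ref{twosT} directly at $x=o$ and note $\frac{m_o^z m_s^z}{z^2}=z^{-1}w_s^z$ for each $s\in L_1$, giving \eqref{fLkT} for $k=0$. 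The inductive step is then identical for both parities. Assuming the identity at level $k$, the only non-$b_G^z$ contribution is the tail $-(-1)^k\sum_{s}z^{-1}w_s^z\sum_{t\succ s}P_t^z\chi_t$, where $s$ ranges over the deepest even (resp.\ odd) level and each child $t$ lies in $T$; hence Lemma \ref{twosT} applies to every $P_t^z\chi_t$. Substituting it, and using the displayed weight identity to collapse the sum over the pairs $t\succ s$, $s'\succ t$ into a single sum over the vertices $s'\in L_{2k+2}$ (each such $s'$ having a unique parent and grandparent), the tail distributes into a fresh block of $b_G^z(s')$-terms carrying the sign $(-1)^{k+1}$, plus a new tail of the same shape indexed by $L_{2k+2}$. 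Tracking the two signs yields exactly the identity at level $k+1$.

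To deduce \eqref{szomf} I would fix $x$, choose $k$ with $2k\ge\ell(x)+1$, and pair the level-$k$ identity with $\chi_x$. The tail contributes nothing: each $P_t^z\chi_t$ occurring there has $t$ at the deepest level and is supported on the descendant set $V_t$, all of whose vertices lie at distance at least $2k+1>\ell(x)$ from $o$, so $\langle P_t^z\chi_t,\chi_x\rangle=0$. It then remains to evaluate $\langle b_G^z(s),\chi_x\rangle=z\,\mathbbm{1}[s=x]+\mathbbm{1}[s\sim x]$ over the surviving $b$-terms, whose indices $s$ all lie in $S$. Bipartiteness forces the outcome: if $x\in S$ the only contribution is $s=x$, producing $z\cdot(-1)^{\lfloor\ell(x)/2\rfloor}z^{-1}w_x^z=(-1)^{\lfloor\ell(x)/2\rfloor}w_x^z$; if $x\in T$ then $s=x$ is impossible and the contributions come precisely from the neighbors $y\sim x$, which all lie in $S$ and appear in the $b$-sum, each contributing $(-1)^{\lfloor\ell(y)/2\rfloor}z^{-1}w_y^z$. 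This is \eqref{szomf}.

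Finally, for the infinite case the safest route is to run this induction on the finite-tree exhaustion $G_n=B_{2n}(G,o)$, where every level-sum is genuinely finite and \eqref{fLkS}--\eqref{szomf} hold verbatim, and then pass to the limit: $P_{G_n}^z\to P_G^z$ in the strong operator topology (as already used in the proof of Theorem \ref{postempdet}), while the weights converge, $m^z_{G_n,o,x}\to m_x^z$, by Lemma \ref{fixedpointptemp}. I expect the two genuinely delicate points to be the sign-and-parity bookkeeping in the inductive step — aligning the alternating signs $(-1)^i$ with the floors $\lfloor\ell(\cdot)/2\rfloor$ — and, for infinite trees, the fact that the raw level-sums in \eqref{fLkS} need not converge in $\ell^2$ level by level; this is why I would interpret the identity coordinate-wise and lean on the exhaustion limit rather than on termwise absolute convergence.
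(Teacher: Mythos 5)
Your argument is correct and is essentially the paper's own proof: the base cases are \eqref{fLkS0} (for $o\in S$) and Lemma \ref{twosT} applied at $x=o$ (for $o\in T$), the inductive step substitutes Lemma \ref{twosT} into the tail and collapses coefficients via the identity $z^{-1}w_s^z\cdot z^{-2}m_t^z m_{s'}^z=z^{-1}w_{s'}^z$, and \eqref{szomf} follows by pairing the level-$k$ identity with $\chi_x$ for $k$ large enough that the tail terms (supported on descendant sets below level $2k$) are orthogonal to $\chi_x$ — exactly the induction the paper invokes without spelling out. The one inaccuracy is your closing caveat: since $G$ is connected with maximum degree at most $D$, every level $L_k$ is a finite set, so all sums in \eqref{fLkS} and \eqref{fLkT} are finite and the induction applies verbatim to infinite trees; the exhaustion-and-limit detour is unnecessary (though harmless).
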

\begin{proof}
If $k=0$ the formula in (\ref{fLkS}) reduces to (\ref{fLkS0}). Then we can prove the statement by induction using Lemma \ref{twosT}. The formula in (\ref{fLkT}) is proved similarly. The last statement is obtained by combining the previous two.
\end{proof}

Now we are ready to finish the proof Lemma \ref{lemmamatrix}.

If $x\in S$, then from (\ref{szomf}), we have $\langle P_G^z\chi_o,\chi_x\rangle=(-1)^{\lfloor \ell(x)/2 \rfloor}w^z_x$.

Let us consider $o\neq x\in T$. Let  $p(x)$ be the parent of $x$.  
%Let $c=(-1)^{\lfloor\ell(p(t))/2\rfloor-\lfloor\ell(t)/2\rfloor}$. 

Then from (\ref{szomf}) we have
\begin{align*}
\langle P_G^z\chi_o, \chi_x \rangle &= \sum_{y\sim x} (-1)^{\lfloor \ell(y)/2 \rfloor} z^{-1}w_y=(-1)^{\lfloor (\ell(x)-1)/2 \rfloor}w_{p(x)}z^{-1}\left(1-\frac{m_x}{z^2}\sum_{y\succ x} m_y\right)\\&=(-1)^{\lfloor (\ell(x)-1)/2 \rfloor}w_{p(x)}z^{-1}\left(1-\frac{\sum_{y\succ x} m_y}{z^2+\sum_{y\succ x} m_y}\right)\\&=(-1)^{\lfloor (\ell(x)-1)/2 \rfloor}w_{p(x)}z^{-1}m_x=(-1)^{\lfloor (\ell(x)-1)/2 \rfloor} w_x.
\end{align*}

Finally, if $o\in T$, then we have
\[\langle P_G^z\chi_o,\chi_o\rangle=\|P_G^z \chi_o\|=h^z_o=1-m_o^z.\]

\subsection{The proof of Theorem \ref{kerprojd}}

Consider the proper two coloring $(S,T)$ of $G$ such that $o\in T$.
 
Let $A$ be the adjacency matrix of $G$, then $\ker A$ is the orthogonal direct sum $\ker A\cap \ell^2(S)$ and $\ker A\cap \ell^2(T)$. Therefore, it is clear that
\[\langle P_G\chi_o,\chi_x\rangle = 0\]
for all $x\in T$, that is, for all $x$ which is at odd distance from the root.
% Fist assume that $x$ is at odd distance from the root. From Lemma \ref{strconv} we have
% \[w_x=\lim_{z\to 0} |w_x^z|=\lim_{z\to\infty} |\langle P_G^z \chi_o,\chi_x \rangle|=|\langle P_G^0 \chi_o,\chi_x|=0.\]
% To see the last equality observe that if $o\in S$, then $P_G^0\chi_o\in \ell^2(S)$ and $x\in T$; or if  $o\in T$, then $P_G^0\chi_o\in \ell^2(T)$ and $x\in S$.

Now consider the case that $x$ is at even distance from the root, that is $x\in T$. Let $o=v_0,v_1,\dots,v_{2k}=x$, be the unique path from $o$ to $x$. Note that \[\bar{P}_G\chi_o=(I-P_{G,S,T}^0)\chi_o=\lim_{z\to 0} (I-P_{G,S,T}^z)\chi_o,\]
where the last equality follows from Lemma \ref{strongconv}.
 
  First we assume that $m_{v_{2i}}> 0$ for all $0<i\le k$, then using Lemma \ref{lemmamatrix} and Proposition \ref{proprekurz}, we obtain that
\begin{align*}
\langle \bar{P}_G\chi_o,\chi_x\rangle&=\lim_{z\to 0} \langle (I-P_{G,S,T}^z)\chi_o,\chi_x\rangle \\&=\lim_{z\to 0} (-1)^{\lfloor(\ell(x)+1)/2\rfloor} w_x^z\\&=(-1)^{\lfloor(\ell(x)+1)/2\rfloor} \lim_{z\to 0} z^{-2k} \prod_{i=0}^{2k} m^z_{v_i}\\&=(-1)^{\lfloor(\ell(x)+1)/2\rfloor}\lim_{z\to 0} m_o^z \prod_{i=1}^{k} m^z_{v_{2i}} \frac{m^z_{v_{2i-1}}}{z^2}\\&=
(-1)^{\lfloor(\ell(x)+1)/2\rfloor} \lim_{z\to 0} m_o^z \prod_{i=1}^{k} \frac{m^z_{v_{2i}}}{z^2+\sum_{y\succ v_{2i-1}} m^z_y}\\&=(-1)^{\lfloor(\ell(x)+1)/2\rfloor}m_o\prod_{i=1}^{k} \frac{m_{v_{2i}}}{\sum_{y\succ v_{2i-1}} m_y}\\&= (-1)^{\lfloor(\ell(x)+1)/2\rfloor} w_x
\end{align*}

Now assume that $m_{v_{2i}}=0$ for some $i>0$.  We need to prove that  $\langle \bar{P}_G \chi_o,\chi_x\rangle =0$.   %We may assume that $o\in S$, because if not, then we can consider $G^T$ instead of $G$. 
Let $\hat{G}$ be the subgraph of $G$ induced by the set descendants $\hat{V}$ of $v_{2i}$ including $v_{2i}$. Let $\hat{S}=S\cap \hat{V}$ and $\hat{T}=T\cap \hat{V}$.  We consider $v_{2i}$ as the root of $\hat{G}$. Combining Lemma \ref{strongconv} with Lemma \ref{lemmamatrix}, we get
\[\|\bar{P}_{\hat{G}} \chi_{v_{2i}}\|^2=\lim_{z\to 0} \|(I-P_{\hat{G},\hat{S},\hat{T}}^z) \chi_{v_{2i}}\|^2=\lim_{z\to 0} m_{v_{2i}}^z=m_{v_{2i}}=0.\] 

Thus,
\[\chi_{v_{2i}}=(I-\bar{P}_{\hat{G}}) \chi_{v_{2i}}=P_{\hat{G},\hat{S},\hat{T}} \chi_{v_{2i}}.\]
So, $\chi_{v_{2i}}$ is in the subspace generated by $(b_{\hat{G}}^0(s))_{s\in \hat{S}}$. Since $b_{\hat{G}}^0(s)=b_{G}^0(s)$ for any $s\in \hat{S}$, we see that $\chi_{v_{2i}}$ is in $(\ker A)^\bot$. 
%This means that $\chi_{v_{2i}}$ is in $W_{\hat{G}}\subseteq W_{G}$. 
Let $P_{\hat{T}}$ the orthogonal projection to $\ell^2(\hat{T})$.  Since $x\in \hat{T}$, it is enough to prove that $P_{\hat{T}} \bar{P}_G \chi_o=0$. Assume that $P_{\hat{T}} \bar{P}_G \chi_o\neq 0$. We claim that that $P_{\hat{T}} \bar{P}_G \chi_o\in \ker A$. We need to prove that $P_{\hat{T}} \bar{P}_G \chi_o$ is orthogonal to $b_G^0(v)$ for any $v\in V(G)$. If $v\in T$, then $b_G^0(v)\in \ell^2(S)$, thus, $\langle P_{\hat{T}} \bar{P}_G \chi_o,b_G^0(v)\rangle=0$. If $v\in S$, we consider three cases
\begin{itemize}
\item If $v\in\hat{V}$, then  $\langle P_{\hat{T}}\bar{P}_G \chi_o, b_G^0(v)\rangle=\langle \bar{P}_G \chi_o, P_{\hat{T}} b_G^0(v)\rangle=\langle \bar{P}_G \chi_o,b_G^0(v)\rangle=0$, so the statement follows.
\item If $v\not\in \hat{V}$ and $v_{2i}$ is not a neighbor of $v$, then  $\langle P_{\bar{T}}\bar{P}_G \chi_o, b_G^0(v)\rangle=0$ because the two vectors have disjoint support. 
\item $v\not\in \hat{V}$ and $v_{2i}$ is  a neighbor of $v$, then \[\langle P_{\hat{T}}\bar{P}_G \chi_o, b_G^0(v)\rangle=\langle P_{\hat{T}}\bar{P}_G \chi_o,\chi_{v_{2i}}\rangle=\langle  \bar{P}_G\chi_o, P_{\hat{T}}\chi_{v_{2i}}\rangle=\langle  \bar{P}_G\chi_o, \chi_{v_{2i}}\rangle=0,\]
since $\chi_{v_{2i}}\in (\ker A)^\bot$.
\end{itemize}

Let $p=\bar{P}_G \chi_o-P_{\hat{T}}\bar{P}_G \chi_o$. Then $p\in \ker A$.
It is clear that $\|\chi_o-\bar{P}_G\chi_o\|>\|\chi_o-p\|$, which is contradiction because $h=\bar{P}_G\chi_o$ should be the unique minimizer of $\min_{h\in \ker A} \|\chi_o-h\|.$

\section{Examples}
\subsection{The number of maximum size matchings in balls of the $d$-regular tree}
In this section, we prove Theorem \ref{thmgomb}. Although Remark \ref{remarklim} provides us  a formula for the limit in Theorem \ref{theoremFO}, it seems very hard to evaluate this formula in practice. Thus, we proceed with a more direct approach.

Let $k=d-1$. Let $T_n$ be the complete $k$-ary tree of depth $n$. To be more specific, $T_0$ is a single vertex, and for $n\ge 1$, $T_n$ is a tree where we have one vertex of degree $k$ called the root, every leaf is at distance $n$ form the root, and vertices other than the root and the leaves have degree $k+1$.

One can easily prove that the sequence $(G_i)$ and the sequence $T_{2i+1}$ have the same Benjamini-Schramm limit. Therefore, by Theorem \ref{theoremFO}, we can work with the sequence $T_{2i+1}$ instead of $G_i$.
 
  Let $a_n=\mm(T_n)$ be the number of maximum size matchings of $T_n$. Let $m_n$ be the probability that the root of $T_n$ is uncovered by a uniform random maximum size matching of $T_n$. 

\begin{lemma}\label{uprob}
For any integer $i\ge 0$, we have
\[m_{2i}=\frac{1}{i+1},\]
and
\[m_{2i+1}=0.\]
\end{lemma}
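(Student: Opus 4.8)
The plan is to reduce the computation of $m_n$ to a one-dimensional recursion that exploits the self-similarity of the complete $k$-ary tree, and then to pass to the zero temperature limit. By Remark \ref{remarkmo}, $m_n$ is the probability $\mathbb{P}(o\in\mathcal{U}_{T_n})$ attached to the root $o$ of $T_n$, and by Lemma \ref{fixedpointptemp} (together with Proposition \ref{proprekurz}) this equals $\lim_{z\to 0}m_o^z$, where the positive temperature quantities satisfy $m_x^z = z^2/(z^2+\sum_{y\succ x}m_y^z)$. Since in $T_n$ the subtree hanging below a vertex at distance $n-j$ from the root is itself a complete $k$-ary tree of depth $j$, the value $m_x^z$ depends only on this depth $j$; I would write it $\mu_j^z$. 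Every non-leaf vertex, including the root, has exactly $k=d-1$ children, so the recursion collapses to
\[\mu_0^z = 1,\qquad \mu_j^z = \frac{z^2}{z^2 + k\,\mu_{j-1}^z}\quad(j\ge 1),\]
and $m_n = \lim_{z\to 0}\mu_n^z$.

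The heart of the matter is the $z\to 0$ analysis, which is delicate precisely at even levels, where $\mu_{2i}^z$ tends to a finite positive limit arising as a $0/0$ indeterminate form; resolving it requires knowing the exact \emph{rate} at which $\mu_{2i+1}^z$ vanishes. I would therefore prove by induction on $i$ the sharpened statement that, as $z\to 0$,
\[\mu_{2i}^z \to \frac{1}{i+1},\qquad \mu_{2i+1}^z = \frac{i+1}{k}\,z^2\,(1+o(1)).\]
The base case $i=0$ is immediate from $\mu_0^z=1$ and $\mu_1^z = z^2/(z^2+k)$. For the inductive step, feeding $\mu_{2i+1}^z\sim\tfrac{i+1}{k}z^2$ into the recursion gives $\mu_{2i+2}^z = z^2/\big(z^2+(i+1)z^2(1+o(1))\big)\to 1/(i+2)$, and then $\mu_{2i+3}^z = z^2/(k\,\mu_{2i+2}^z)\sim\tfrac{i+2}{k}z^2$, which closes the induction. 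Taking limits yields $m_{2i}=\lim_{z\to 0}\mu_{2i}^z = 1/(i+1)$ and $m_{2i+1}=\lim_{z\to 0}\mu_{2i+1}^z = 0$, exactly the assertion of the lemma.

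I expect the only genuine obstacle to be the bookkeeping of the vanishing rate at odd levels: a naive passage to the limit in the recursion destroys the information needed to evaluate the even-level limits, which is why the sharpened induction carrying the $z^2$ coefficient is essential. One can in principle sidestep the explicit rate by using the zero temperature recursion \eqref{zerorek} directly: for the complete $k$-ary tree, $\sum_{u\succ y}m_u = k\,m^{(j-2)}$ simplifies it to $m^{(j)} = m^{(j-2)}/(1+m^{(j-2)})$ on the two decoupled chains of even and odd depths, from which $m^{(2i)} = 1/(i+1)$ and $m^{(2i+1)}=0$ follow by a one-line induction. That route is cleaner but requires separately justifying that the leaf-to-root evaluation of \eqref{zerorek} produces the correct (largest) solution for the finite tree $T_n$; the positive temperature approach avoids this subtlety, since for a finite tree the recursion for $\mu_j^z$ is unambiguously solved from the leaves and $m_x=\lim_{z\to 0}m_x^z$ holds by definition.
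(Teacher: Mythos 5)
Your proposal is correct, but your primary argument takes a genuinely different route from the paper's; in fact, the paper's proof is precisely your ``cleaner'' alternative. The paper invokes Remark \ref{remarkmo} together with the zero-temperature recursion \eqref{zerorek}, notes $m_0=1$ and $m_1=0$, uses self-similarity of $T_n$ to collapse \eqref{zerorek} to $m_n=\frac{1}{1+k(km_{n-2})^{-1}}=\frac{1}{1+m_{n-2}^{-1}}$ (with the conventions $0^{-1}=\infty$, $\infty^{-1}=0$), and concludes by a two-line induction. Your main route instead stays at positive temperature, where Lemma \ref{fixedpointptemp} makes the recursion \eqref{rekur} unambiguous, collapses it to $\mu_j^z=z^2/(z^2+k\mu_{j-1}^z)$, and carries the sharpened hypothesis $\mu_{2i}^z\to\frac{1}{i+1}$, $\mu_{2i+1}^z=\frac{i+1}{k}z^2(1+o(1))$ through the induction before letting $z\to 0$; your asymptotics check out, and your observation that one must track the \emph{rate} of vanishing at odd levels (the naive limit $0$ destroys the information needed to resolve the $0/0$ form at even levels) is exactly the point. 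As for what each buys: the paper's version is shorter but silently relies on the fact that on a finite tree the leaf-to-root evaluation of \eqref{zerorek} is its unique solution in $[0,1]^V$, hence coincides with the ``largest'' solution in the Bordenave--Lelarge--Salez lemma — the subtlety you flag; this is, however, only a one-line induction on height, since \eqref{zerorek} expresses each $m_x$ in terms of grandchildren values alone. Your positive-temperature version is longer but bypasses the $0^{-1}=\infty$ conventions and the largest-solution characterization entirely, and yields the vanishing rate $\mu_{2i+1}^z\sim\frac{i+1}{k}z^2$ as a by-product. Either argument is a complete and correct proof of the lemma.
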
 
\begin{proof}
It is straightforward to check that $m_0=1$ and $m_1=0$. By Remark \ref{remarkmo} and equation~\eqref{zerorek}, for any $n\ge 2$, we have
\[m_n=\frac{1}{1+k(km_{n-2})^{-1}}=\frac{1}{1+m_{n-2}^{-1}}.\]
Therefore, the statement follows by induction.
\end{proof}

\begin{lemma}
For any $i>0$, we have
\[a_{2i}=(i+1)a_{2i-1}^k,\]
and 
\[a_{2i+1}=ka_{2i-1}^k a_{2i}^{k-1}.\]
Consequently,
\[a_{2i+1}=k(i+1)^{k-1}a_{2i-1}^{k^2}.\] 
\end{lemma}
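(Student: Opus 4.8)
The plan is to derive both recursions by analyzing how maximum matchings of $T_n$ decompose over the subtrees hanging off the root, using the probabilities $m_n$ from Lemma \ref{uprob} as the crucial bookkeeping device. The key structural observation is that $T_{2i}$ is built from a root joined to $k$ copies of $T_{2i-1}$, while $T_{2i+1}$ is a root joined to $k$ copies of $T_{2i}$ (strictly, the root of $T_{n}$ has degree $k$ and its children are roots of copies of $T_{n-1}$). Since each child subtree is itself a complete $k$-ary tree of one smaller depth, I can count maximum matchings by conditioning on whether the root is matched to one of its children, and if so, to which child.

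First I would treat $a_{2i}$. The root of $T_{2i}$ has $k$ children, each the root of a copy of $T_{2i-1}$. Because $m_{2i-1}=0$ by Lemma \ref{uprob}, every maximum matching of $T_{2i-1}$ covers its root, so the root of $T_{2i}$ cannot be profitably matched to any child without decreasing the total matching size in that branch; the maximum size of $T_{2i}$ is therefore achieved by leaving the root either unmatched or matched into a branch, and one checks that these options all give maximum matchings. Concretely, either the root is uncovered — giving $a_{2i-1}^k$ configurations, one independent maximum matching per child copy — or the root is matched to one of its $k$ children, and matching it to a fixed child $c$ forces that child to be uncovered in its own subtree (but that costs nothing since $m_{2i-1}=0$ means no maximum matching of $T_{2i-1}$ uncovers the root, so this branch actually contributes zero maximum matchings). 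The clean way to see $a_{2i}=(i+1)a_{2i-1}^k$ is to use the entropy/probability identity: since the root is uncovered with probability $m_{2i}=\frac{1}{i+1}$ and the number of matchings with the root uncovered is exactly $a_{2i-1}^k$, we get $a_{2i}\cdot m_{2i}=a_{2i-1}^k$, hence $a_{2i}=(i+1)a_{2i-1}^k$.

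For $a_{2i+1}$, the root has $k$ children, each a copy of $T_{2i}$. Now $m_{2i+1}=0$, so every maximum matching covers the root; thus the root is matched to exactly one of its $k$ children, say child $c$. Choosing $c$ (in $k$ ways), the subtree at $c$ must then use a maximum matching of $T_{2i}$ that \emph{leaves its root uncovered} (there are $a_{2i-1}^k$ of these, the root-uncovered configurations counted above), while each of the other $k-1$ children uses an unrestricted maximum matching of $T_{2i}$ (contributing $a_{2i}$ each). This gives $a_{2i+1}=k\,a_{2i-1}^k\,a_{2i}^{k-1}$. The final consolidated formula then follows by substituting $a_{2i}=(i+1)a_{2i-1}^k$:
\[
a_{2i+1}=k\,a_{2i-1}^k\left((i+1)a_{2i-1}^k\right)^{k-1}=k(i+1)^{k-1}a_{2i-1}^{k^2}.
\]

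The main obstacle is justifying that matching the root of $T_{2i+1}$ to a child forces that child's own root to be uncovered and that this does not lose matching size — i.e.\ verifying that the maximum matching size of $T_{2i+1}$ equals $1+k\cdot(\text{max size of }T_{2i})$ achieved in exactly this configuration, and symmetrically that the root of $T_{2i}$ is genuinely free to be uncovered at no cost. This hinges precisely on the vanishing $m_{2i\pm 1}=0$ together with $m_{2i}>0$ from Lemma \ref{uprob}: the odd-level roots are always covered while even-level roots are sometimes free, which is what makes the two recursions asymmetric. I would make this rigorous by a short exchange/augmenting-path argument on the symmetric difference of two matchings, or more cleanly by appealing directly to the probability identity $a_n\cdot m_n=(\text{number of maximum matchings uncovering the root})$ so that the combinatorics is driven entirely by the already-established values of $m_n$.
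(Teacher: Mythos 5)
Your overall route is essentially the paper's: decompose $T_n$ at its root, use Lemma \ref{uprob} to control which configurations can occur, and convert between counts via the identity that the number of maximum matchings of $T_n$ leaving the root uncovered equals $m_n\cdot a_n$. The computation you actually rely on --- $a_{2i}\,m_{2i}=a_{2i-1}^k$ for the even case; and for the odd case, $m_{2i+1}=0$ forcing the root to be matched to one of its $k$ children, whose subtree then carries a root-uncovered maximum matching ($a_{2i-1}^k$ choices, legitimate because $m_{2i}>0$) while the other $k-1$ subtrees carry unrestricted maximum matchings --- is exactly the paper's argument; the paper merely phrases the odd case as deleting the root edge together with both endpoints, leaving $k$ copies of $T_{2i-1}$ and $k-1$ copies of $T_{2i}$, which is the same count.

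However, one parenthetical claim in your even-case discussion is false and in fact contradicts the lemma you are proving: you assert that the branch in which the root of $T_{2i}$ is matched to a child ``contributes zero maximum matchings'' because $m_{2i-1}=0$. If that were true, every maximum matching of $T_{2i}$ would leave the root uncovered, i.e.\ $m_{2i}=1$ rather than $\frac{1}{i+1}$, and you would get $a_{2i}=a_{2i-1}^k$ instead of $(i+1)a_{2i-1}^k$. The mistake is that when the root is matched to a child $c$, the restriction of the matching to the copy of $T_{2i-1}$ at $c$ need not be a \emph{maximum} matching of that copy; it is a root-uncovered matching whose size is one less than the maximum matching size of $T_{2i-1}$, and such matchings do exist --- indeed a fraction $1-m_{2i}=\frac{i}{i+1}$ of all maximum matchings of $T_{2i}$ match the root (for $k=2$, $i=1$: four of the eight maximum matchings of $T_2$). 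Your proof survives because your final derivation goes through the probability identity, which accounts for both branches at once without ever needing to count the matched-root branch separately; but the false remark should be deleted or corrected, since as written the two halves of your even-case discussion are inconsistent with each other.
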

\begin{proof}
If we delete the root of $T_{2i}$, then the resulting graph is the disjoint union of $k$ graphs isomorphic to $T_{2i-1}$. Thus, this smaller graph has $a_{2i-1}^k$ maximum size matchings. Since $m_{2i}>0$ by Lemma \ref{uprob}, these matchings are exactly the maximum size matchings of $T_{2i}$ which leave the root uncovered. Therefore, by Lemma \ref{uprob}, we have
\[a_{2i}=m_{2i}^{-1} a_{2i-1}^k=(i+1) a_{2i-1}^k.\] 

To prove the second statement, observe that by Lemma \ref{uprob}, every maximum size matching of $T_{2i+1}$ must cover the root. There are $k$ edges incident to the root. If we delete one of these edges together with its endpoints, then the resulting graph will be the disjoint union of $k$ trees isomorphic to $T_{2i-1}$ and $k-1$ trees isomorphic to $T_{2i}$. Thus, the statement follows easily.

We can obtain the third statement by combining the previous two. 
\end{proof}

Using the last statement of the lemma above, by induction, we obtain that for any $1\le h\le i$, we have 
\[\log a_{2i+1}=k^{2h}\log a_{2(i-h)+1}+\left(\sum_{j=0}^{h-1} k^{2j}\right)\log k+ (k-1)\sum_{j=0}^{h-1} k^{2j} \log(i+1-j).\]
Since $a_1=k$, with the choice of $h=i$, we obtain that
\begin{align*}
\log a_{2i+1}&=\left(\sum_{j=0}^{i} k^{2j}\right)\log k+(k-1)\sum_{j=0}^{i-1} k^{2j} \log(i+1-j) \\
&=\frac{k^{2(i+1)}-1}{k^2-1}\log k + (k-1)\sum_{\ell=2}^{i+1} k^{2(i+1-\ell)}\log \ell.
\end{align*} 

Note that $|V(T_{2i+1})|=\frac{k^{2(i+1)}-1}{k-1}$. Therefore,
\[\frac{\log a_{2i+1}}{|V(T_{2i+1})|}=\frac{\log k}{k+1} + (k-1)^2\frac{k^{2(i+1)}}{k^{2(i+1)}-1}\sum_{\ell=2}^{h+1} k^{-2\ell}\log \ell.\]

Thus, it follows easily that
\[\lim_{i\to \infty}\frac{\log a_{2i+1}}{|V(T_{2i+1})|}=\frac{\log k}{k+1} + (k-1)^2\sum_{\ell=2}^{\infty} k^{-2\ell}\log \ell.\]
 
\subsection{An example mentioned in the Introduction}\label{pelda}
In this subsection, we verify Equation \eqref{peldeq1} and Equation \eqref{peldeq2} from the Introduction.

We set $m_n=\mathbb{P}(o\in U(\mathcal{M}_{G_n}))$. We see that $m_0=1$ and $m_1=0$.
By Remark \ref{remarkmo} and equation~\eqref{zerorek}, for any $n\ge 2$, we have
\[m_n=\frac{1}{1+(2m_{n-2})^{-1}}.\] 

Observe that the equation 
\[x=\frac{1}{1+(2x)^{-1}}\]
has two solutions $x=0$ and $x=\frac{1}{2}$. Thus, a standard argument gives that
\[\lim_{n\to\infty} m_{2n}=\frac{1}{2}\text{ and }\lim_{n\to\infty} m_{2n+1}=0.\]
% \begin{thebibliography}{9}

% \bibitem{bls11} Bordenave, C., Lelarge, M., Salez, J.: The rank of diluted random graphs. Ann. Probab. 39(3), 1097-
% 1121 (2011)

% \bibitem{bls13} Bordenave, C., Lelarge, M. \& Salez, J. Probab. Theory Relat. Fields (2013) 157-183.

% \bibitem{lyons} Determinantal probability measures, Publ. Math. Inst. Hautes \'{E}tudes Sci. 98 (2003), 167-212. 
\bibliography{references}
\bibliographystyle{plain}

\Addresses
%\end{thebibliography}
\end{document}